\providecommand\given{}
\newcommand\SetSymbol[1][]{%
	\nonscript\:#1\vert
	\allowbreak
	\nonscript\:
	\mathopen{}}
\DeclarePairedDelimiterX\Set[1]\{\}{%
	\renewcommand\given{\SetSymbol[\delimsize]}
	#1
}
\DeclarePairedDelimiter{\abs}{\lvert}{\rvert}
\DeclarePairedDelimiterXPP{\norm}[2]{}{\lVert}{\rVert}{_{#2}}{#1}
\DeclarePairedDelimiterXPP{\Altnorm}[3]{}{\lVert}{\rVert}{_{#2}^{#3}}{#1}
\DeclareMathOperator{\WF}{WF}
\DeclareMathOperator{\Char}{Char}
\newcommand{\fa}{\;\,\forall\,}
\newcommand{\ex}{\;\,\exists\,}
\newcommand{\N}{\mathbb{N}}
\newcommand{\Real}{\mathbb{R}}
\newcommand{\Comp}{\mathbb{C}}
\newcommand{\E}{\mathcal{E}}
\newcommand{\D}{\mathcal{D}}
\newcommand{\CC}{\mathcal{C}}
\newcommand{\An}{\mathcal{A}}
\newcommand{\G}{\mathcal{G}}
\newcommand{\bG}{\mathbf{G}}
\newcommand{\sP}{\mathsf{P}}
\newcommand{\alp}{{\lvert\alpha\rvert}}
\newcommand{\bet}{{\lvert\beta\rvert}}
\newcommand{\lamb}{{\lvert\lambda\rvert}}
\newcommand{\gam}{{\lvert\gamma\rvert}}
\newcommand{\Beu}[2]{\mathcal{E}^{( #1 )} ( #2 )}
\newcommand{\Rou}[2]{\mathcal{E}^{\{ #1 \}} \left( #2 \right)}
\newcommand{\DC}[2]{\mathcal{E}^{[ #1 ]} ( #2 )}
\newcommand{\gRou}[2]{\mathcal{B}^{\{ #1 \}} ( #2 )}
\newcommand{\gBeu}[2]{\mathcal{B}^{( #1 )} ( #2 )}
\newcommand{\gDC}[2]{\mathcal{B}^{[ #1 ]} ( #2 )}
\newcommand{\vBeu}[3][P]{\mathcal{E}^{ (#2) } ( #3 ; #1 )}
\newcommand{\vRou}[3][P]{\mathcal{E}^{\{ #2 \}} \left( #3; #1 \right)}
\newcommand{\vDC}[3][P]{\mathcal{E}^{[ #2 ]} ( #3 ; #1  )}
\newcommand{\vgBeu}[3][P]{\mathcal{B}^{ (#2) } ( #3 ; #1 )}
\newcommand{\vgRou}[3][P]{\mathcal{B}^{\{ #2 \}} \left( #3; #1 \right)}
\newcommand{\vgDC}[3][P]{\mathcal{B}^{[ #2 ]} ( #3 ; #1  )}
\newcommand{\Dp}{\mathcal{D}^\prime}
\newcommand{\Ban}[3][h]{\mathcal{B}_{ #2 , #1 }(#3)}
\newcommand{\bM}{\mathbf{M}}
\newcommand{\bN}{\mathbf{N}}
\newcommand{\fM}{\mathfrak{M}}
\newcommand{\fN}{\mathfrak{N}}
\newcommand{\bL}{\mathbf{L}}
\newcommand{\bB}{\mathbf{B}}
\newcommand{\fW}{\mathfrak{W}}
\newcommand{\bW}{\mathbf{W}}
\newcommand{\fT}{\mathfrak{T}}
\newcommand{\U}{\mathcal{U}}
\DeclareMathOperator*{\ind}{ind}
\DeclareMathOperator*{\proj}{proj}
\theoremstyle{plain}
\newtheorem{Thm}{Theorem}[section]
\newtheorem{Prop}[Thm]{Proposition}
\newtheorem{Lem}[Thm]{Lemma}
\newtheorem{Cor}[Thm]{Corollary}
\theoremstyle{definition}
\newtheorem{Def}[Thm]{Definition}
\newtheorem*{Conj}{Conjecture}
\theoremstyle{remark}
\newtheorem{Rem}[Thm]{Remark}
\newtheorem{Ex}[Thm]{Example}
\numberwithin{equation}{section}
\title{The Kotake-Narasimhan Theorem in general ultradifferentiable classes}
\author{Stefan F\"urd\"os}
\address{Instituto de Matem\'atica e Estat\'istica, Universidade de S\~{a}o Paulo, Rua do Mat\~{a}o 1010,\\ 05508-090 S\~{a}o Paulo, SP, Brazil}
\email{stefan.fuerdoes@univie.ac.at}
\subjclass[2020]{Primary 35B65; Secondary 26E10,35B45,35J48,46E10}
	\keywords{Kotake-Narasimhan Theorem; problem of iterates; ultradifferentiable functions; ultradifferentiable vectors}
\thanks{text}
\begin{document}
\maketitle
\begin{abstract}
	We prove a Kotake-Narasimhan type theorem in general ultradifferentiable classes given by weight matrices. In doing so we simultaneously recover and  generalize significantly the known results for classes given by weight sequences and weight functions. In particular, we obtain a sharp Kotake-Narasimhan theorem for Beurling classes.
\end{abstract}

\section{Introduction}\label{Intro}
\subsection{Historical Background}
In this paper we consider the problem of iterates for elliptic operators with coefficients in general ultradifferentiable structures.\footnote{Here we understand by an ultradifferentiable 
	structure an algebra of smooth functions defined by
	estimates on the derivatives and which contains the analytic class.}
In its general form the problem of iterates for an ultradifferentiable structure $\U$ can be stated
in the following way:
\begin{quote}
	Let $u$ be a smooth function which satisfies the defining estimates of $\U$ with respect to 
	the iterates of a differential operator $P$.
	Can we conclude that $u$ is already an element of $\U$?
\end{quote}
If the answer to the question above is affirmative for an ultradifferentiable structure $\U$ and an operator $P$ then we say that the theorem of iterates holds for the operator $P$ with respect
to $\U$.

 Recently, there has been a resurged interest in the problem of iterates
 in various different settings, see e.g.\ \cite{MR3652556},\cite{ChailiDjilali18}, \cite{MR4098643}, \cite{zbMATH07538293}  and
 \cite{zbMATH07469041}.
  For surveys on the problem of iterates we refer the reader to \cite{MR1037999} and \cite{Derridj2017}.
In this paper we are going to revisit the classical case of elliptic operators in open
sets of ${\Real}^n$ 
in view of the recently expanded theory on general ultradifferentiable classes.

%It is a trivial fact in the theory of linear partial differential operators that ellipticity implies
%very good regularity properties, like  e.g.\ hypoellipticity, for the operator under consideration.
%If the coefficients of the operator are additionally analytic then the operator is also analytic-hypoelliptic, see e.g.\ \cite{MR1249275}.
%Actually a far stronger statement holds then:
The main starting point of the problem of iterates was when in 1962 Kotake-Narasimhan \cite{KotakeNarasimhan} and Komatsu \cite{komatsu1962}
separately proved the following statement:
 If $P$ is an elliptic operator of order $d$
with analytic coefficients on an open set $\Omega\subseteq{\Real}^n$, then a
smooth function $u\in\E(\Omega)$ is analytic in $\Omega$ if and only if
 for each relatively compact set $U\subseteq \Omega$ there are constants $C,h>0$ such that
\begin{equation*}
	\norm*{P^ku}{L^2(U)}\leq Ch^k(dk)!, \quad k\in\N_0.
\end{equation*}
Nelson \cite{MR107176} proved an analogous statement for an elliptic
system of analytic vector fields.

The next natural step is then to ask if a similar result holds if 
one considers instead of the analytic class more
general ultradifferentiable classes.
We are going to say that the Kotake-Narasimhan theorem holds for an ultradifferentiable structure $\U$
if the theorem of iterates with respect to $\U$ holds for every elliptic operator $P$ with coefficients in $\U$. %the following holds:
%If $u$ is a smooth function which satisfies the defining estimates of $\U$ with respect to 
%the iterates of $P$ then $u$ is already an element of $\U$.

For example Bolley-Camus \cite{MR548225} proved the Kotake-Narasimhan theorem for Gevrey classes.
If we want to consider more general families of ultradifferentiable classes then the commonly
used spaces are the 
Denjoy-Carleman classes which are defined by weight sequences, for an introduction
 see e.g.\ \cite{Komatsu73},
and the Braun--Meise--Taylor classes which are given by weight functions, introduced
in their modern form by \cite{MR1052587}.
Both of these classes are generalizations of the Gevrey classes,
however, they do not in general coincide, see \cite{BonetMeiseMelikhov07}.
In this paper we take a step further and consider ultradifferentiable classes
given by weight matrices, i.e.~countable families of weight sequences, which were introduced
in \cite{MR3285413} and \cite{MR3462072}. 
These classes include both Denjoy-Carleman and Braun-Meise-Taylor classes.
In \cite{zbMATH07538293} we showed that the theorem of iterates with respect to classes
given by weight matrices holds for elliptic operators with analytic coefficients using a microlocal
analytic approach, generalizing results of \cite{MR557524} in the case of weight sequences
and of \cite{MR3661157} for weight functions.

More generally, the Kotake-Narasimhan theorem for Denjoy-Carleman classes was proven in \cite{Lions}
and in a more general form in \cite{MR3090868}.
In the case of Braun-Meise-Taylor classes the Kotake-Narasimhan theorem was shown by
\cite{MR3652556}. All these instances followed generally the lines of the proofs
of \cite{komatsu1962}, \cite{KotakeNarasimhan} and \cite{MR548225}, which
 used a technique involving a-priori $L^2$-estimates  
and nested neighborhoods first introduced by Morrey-Nirenberg \cite{Morrey1957}.
In this paper we prove the Kotake-Narasimhan theorem for ultradifferentiable classes given by weight matrices by adapting the proof in \cite{MR548225} resp.\ \cite{ChailiDjilali18}.
In doing so we not only recover the known statements in Denjoy-Carleman classes and Braun-Meise-Taylor classes but also partially generalizing them.
In particular, in the case of Denjoy-Carleman classes Lions and Magenes \cite{Lions}
asked, what the optimal conditions on the weight sequences are in order for
 the Kotake-Narasimhan Theorem to hold.
As we will see, our main result implies especially that the Kotake-Narasimhan Theorem holds for the Denjoy-Carleman classes determined by
the sequences 
\begin{equation*}
	N_k^q=q^{k^2},\qquad k\in\N_0, 
\end{equation*}
where $q>1$ is a real parameter. These sequences have not been covered by the previous works on
the Kotake-Narasimhan Theorem for Denjoy-Carleman classes, cf.~Remark \ref{DC-Remark}.

\subsection{Statement of main results}
In order to formulate our main results for weight sequences and weight functions we need to fix
some notations: $\Omega$ will always be an open set of ${\Real}^n$ and we set $D_j=-i\partial_j=-i\partial_{x_j}$ where $\partial_{x_j}$ is
the $j$-th partial derivative, $j=1,\dotsc,n$.
We denote the set of positive integers by $\N$ whereas $\N_0=\N\cup\{0\}$.
Furthermore we say that a sequence $\bM=(M_k)_k$ of positive numbers
is a \emph{weight sequence} if $M_0=1\leq M_1$ and
\begin{equation}\label{WeakLogConvex}
	M_{k}^2\leq M_{k-1}M_{k+1}
\end{equation}
for all $k\in\N$.
Next we define the Denjoy-Carleman class associated to $\bM$ over an open set $\Omega\subseteq{\Real}^n$. 
To a weight sequence $\bM$ we can in fact associate two different ultradifferentiable classes.
First the Roumieu class associated to $\bM$ is given by
\begin{align*}
	\Rou{\bM}{\Omega}&=\biggl\{f\in\E(\Omega)\;\Big\vert \fa U\Subset \Omega\ex h>0 \ex C>0:\\ 
&\qquad\qquad\qquad\qquad\qquad		\sup_{x\in U}\,\abs*{D^\alpha f(x)}\leq Ch^\alp M_\alp\; \fa \alpha\in\N_0^n\biggr\}\\
	\shortintertext{whereas the Beurling class associated to $\bM$ is}
	\Beu{\bM}{\Omega}&=\biggl\{f\in\E(\Omega)\;\Big\vert \fa U\Subset \Omega \fa h>0 \ex C>0:\\
	&\qquad\qquad\qquad\qquad\qquad  
		\sup_{x\in U}\,\abs*{D^\alpha f(x)}\leq Ch^\alp M_\alp\; \fa \alpha\in\N_0^n\biggr\}.
\end{align*}
A basic question in the theory of ultradifferentiable classes is that of quasianalyticity.
We recall that an algebra $E$ of smooth functions is called non-quasianalytic if 
the only flat function in $E$ is the zero function.
 The Denjoy-Carleman theorem, see \cite{MR1996773}, says that
a Denjoy-Carleman class $\DC{\bM}{\Omega}$\footnote{We use the notation
	$[\ast]=\{\ast\},(\ast)$, $\ast$ denoting a weight sequence, weight function or weight matrix, in order to write down statements in the Roumieu and Beurling case simultaneously.} is non-quasianalytic if and only if
\begin{equation}\label{non-quasi}
	\sum_{k=1}^\infty \frac{M_{k-1}}{M_k}<\infty.
\end{equation}
We call a weight sequence $\bM$ non-quasianalytic if \eqref{non-quasi} holds and otherwise quasianalytic.

%We can define the spaces of ultradifferentiable vectors associated to a system $\sP$ similarly to 
%the Denjoy-Carleman classes: 
%Let $\bM$ be a weight sequence  and $\sP=\{P_1,\dotsc,P_\ell\}$ be a system of 
%differential operators with coefficients in $\DC{\bM}{\Omega}$.
%Then 
%the Roumieu class $\vRou[\sP]{\bM}{\Omega}$ of vectors of $\sP$ associated with $\bM$ is defined as the space of all functions $u\in\E(\Omega)$ satisfying on all $V\Subset\Omega$ an estimate
%of the form
%\begin{equation}\label{VectorEst1}
%	\norm*{P^\alpha u}{L^2(V)}\leq Ch^k M_{d_\alpha}
%\end{equation} 
%with $\alpha\in\{1,\dotsc,\ell\}^k$, $k\in\N$, $d_\alpha=\sum d_{\alpha_j}$,
% and $C,h>0$ are constants independent of $k$.
% The Beurling class $\vBeu[\sP]{\bM}{\Omega}$ of vectors of $\sP$ is the space of all 
%  $u\in\E(\Omega)$ which satisfy
% the estimate \eqref{VectorEst1} for all $V\Subset \Omega$, every $h>0$ and some $C>0$.

In order to formulate the Kotake-Narasimhan Theorem for Denjoy-Carleman classes
we have to specify additional conditions on the weight sequence $\bM$.
It is often easier to formulate these conditions not in terms of the sequence $(M_k)_k$ directly
but to use other sequences associated to $\bM$, like $m_k=M_k/k!$ or $\mu_k=M_k/M_{k-1}$.
\begin{Def}\label{SeqNormal}
	Let $\bM=(M_k)_k$ be a weight sequence.
We say that $\bM$ is \emph{weakly regular}
	if the following conditions hold:
	 %(cf.~\cite{Fuerdoes2019}) if the following conditions hold:
		\begin{gather}
			\label{M0}				\lim_{k\rightarrow\infty} \sqrt[k]{m_k}=\infty,\\
			\label{M1} \text{the sequence }\sqrt[k]{m_k} \text{ is increasing.}\\
		%\label{M2prime}
		%\ex \gamma>0:\; m_{k+1}\leq \gamma^{k+1}m_k\qquad \fa k\in\N_0.
			\label{M2prime}
		\ex \gamma>0:\; M_{k+1}\leq \gamma^{k+1}M_k\quad \fa k\in\N_0.
		\end{gather}
\end{Def}
\begin{Ex}
	Let $s\geq 1$. The Gevrey sequence $\bG^{s}$, which is given by $G^s_k=(k!)^s$, is weakly regular.
	More generally the weight sequences $\bB^{s,\sigma}$, defined by
	$B^{s,\sigma}_k=(k!)^s\log(k+e)^{\sigma k}$, are weakly regular
	for all $s\geq 1$ and $\sigma >0$. The sequences $\bB^{s,\sigma}$ are quasianalytic
	if and only if $s=1$ and $0<\sigma\leq 1$.
	
	Another examples of weakly regular weight sequences are the following:
	For $q>1$ and $1<r$ let $\bL^{q,r}$ be given by $L_k^{q,r}=q^{k^r}$.
	The weight sequences $L_k^{q,r}$ are weakly regular for all $q>1$ and $1<r\leq 2$.
	In particular the case $r=2$ are the $q$-Gevrey sequences $\bN^q$ given by
	$N^q=q^{k^2}$.
\end{Ex}

If $\sP=\{P_1,\dotsc,P_\ell\}$ is a system of partial differential operators
\begin{equation*}
	P_j=\sum_{\alp\leq d_j} a_{j\alpha}D^\alpha
\end{equation*}
with smooth coefficients $a_{j\alpha}\in\E(\Omega)$, then  
we recall that the system $\sP$ is \emph{elliptic} in $\Omega$ if for every $x\in\Omega$
the principal symbols
\begin{equation*}
	p_j(x,\xi)=\sum_{\alp=d_j} a_{j\alpha}(x)\xi^\alpha, \qquad \xi\in{\Real}^n,
\end{equation*}
have no common nontrivial real zero in $\xi$.

Our main result in the case of Denjoy-Carleman classes is the following theorem.
	\begin{Thm}\label{Main-Weight}
		Let $\bM$ be a weakly regular weight sequence
		 and $\sP=\{P_1,\dotsc,P_\ell\}$ be
		an elliptic system of
		differential operators.
		 Then the following statements hold:
		 \begin{enumerate}
		 \item	Assume that the coefficients of the operators $P_j$, $j=1,\dotsc,\ell$ are
		 	all elements of $\Rou{\bM}{\Omega}$. 
		 	Then $u\in\Rou{\bM}{\Omega}$ if and only if $u\in\E(\Omega)$ and
		 	for all $U\Subset\Omega$ there are constants $C,h>0$ such that for all $k\in\N_0$ we have that
		 	\begin{equation*}
		 		\norm{P^\alpha u}{L^2(U)}\leq Ch^{d_\alpha}M_{d_\alpha}
		 	\end{equation*}
	 	for all  $\alpha\in\{1,\dotsc,\ell\}^k$ where $P^{\alpha}=P_{\alpha_1}\dots P_{\alpha_k}$ and $d_\alpha=d_{\alpha_1}+\dots+d_{\alpha_k}$ with $d_j$ denoting the order
	 	of the operator $P_j$, $j=1,\dotsc,\ell$.
	 	\item Assume that the coefficients of the operators $P_j$, $j=1,\dotsc,\ell$, 
	 	are in $\Beu{\bM}{\Omega}$.
	 	Then $u\in\Beu{\bM}{\Omega}$ if and only if $u\in\E(\Omega)$ and for all $U\Subset \Omega$
	 	 and every $h>0$  there exists 
	 	a constant $C>0$ such that
	 	\begin{equation*}
	 		\norm{P^\alpha u}{L^2(U)}\leq Ch^{d_\alpha}M_{d_\alpha}
	 	\end{equation*}
 	for all $k$ and $\alpha\in\{1,\dotsc,\ell\}^k$.
		 \end{enumerate}
	\end{Thm}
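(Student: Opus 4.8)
The two implications in each of (1), (2) are of quite different natures. The ``only if'' directions are elementary and rest on the stability of $\DC{\bM}{\Omega}$ under products and under differential operators with coefficients in $\DC{\bM}{\Omega}$. Concretely, expanding $P^\alpha u=P_{\alpha_1}\cdots P_{\alpha_k}u$ by the Leibniz rule produces a sum of at most $R^{d_\alpha}$ terms, with $R$ depending only on $n$, $\ell$ and $\max_j d_j$, each of the form $\bigl(\prod_{i=1}^k D^{\delta_i}a_{\alpha_i\gamma_i}\bigr)D^\gamma u$ with $\sum_i\abs{\delta_i}+\abs{\gamma}\le d_\alpha$. Estimating every factor on $U$ by the defining inequalities for $u$ and for the coefficients, and collapsing the resulting product of $M$'s by the iterated log-convexity estimate $M_aM_b\le M_{a+b}$ coming from \eqref{WeakLogConvex}, one obtains $\norm{P^\alpha u}{L^2(U)}\le CH^{d_\alpha}M_{d_\alpha}$ in the Roumieu case. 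The Beurling case is identical after taking the constants of $u$ and of the coefficients small, the dependence on the target $h$ being absorbed because $\mu_k=M_k/M_{k-1}\to\infty$, which follows from \eqref{M0} and \eqref{M1}.

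For the ``if'' directions the plan is to follow the Morrey--Nirenberg method of nested neighbourhoods and a priori $L^2$-estimates, in the form used by Bolley--Camus \cite{MR548225} for a single operator and Chaili--Djilali \cite{ChailiDjilali18} for systems. Working on a fixed small ball $\omega$, ellipticity of $\sP$ yields the basic estimate $\norm{\phi}{m}\le C_0\bigl(\sum_{j=1}^\ell\norm{P_j\phi}{0}+\norm{\phi}{0}\bigr)$ for all $\phi\in C_c^\infty(\omega)$, where $m=\max_j d_j$ and $\norm{\cdot}{s}$ denotes the $L^2$-Sobolev norm of order $s$; this is the only point at which the hypothesis on $\sP$ is used, and one may equivalently pass to the single elliptic operator $\sum_j P_j^\ast P_j$. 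Fixing a compact $K\Subset\omega$ and a scale of open sets $\omega=\omega_0\supset\omega_1\supset\cdots\supset K$ together with Morrey--Nirenberg cut-offs (so that no non-quasianalyticity of $\bM$ is needed), I would establish, by induction on $\abs{\beta}$ with base case $\beta=0$ equal to the hypothesis, bounds of the form $\norm{D^\beta P^\alpha u}{L^2(\omega_p)}\le B^{p+1}M_p$ where $p=\abs{\beta}+d_\alpha$ and $B$ is fixed; the sup-norm estimates defining $\DC{\bM}{\Omega}$ then follow by Sobolev embedding, which is where \eqref{M2prime} enters.

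The inductive step applies the a priori estimate to $\phi=\chi D^{\beta'}P^\alpha u$, where $\beta'$ is $\beta$ with up to $m$ derivatives removed and $\chi$ a cut-off adapted to the relevant pair of neighbourhoods; the range $\abs{\beta}\le m$ is disposed of first by combining this estimate with interpolation and absorption. Commuting each $P_j$ through $D^{\beta'}$ and through $\chi$, the principal term becomes $\chi D^{\beta'}P^{(j,\alpha)}u$, which has strictly fewer bare derivatives than $D^\beta P^\alpha u$ — the iterate having grown by $d_j\le m$, an increase whose effect on $\bM$ is absorbed using \eqref{M2prime} — while every remaining term carries at least one derivative of a coefficient $a_{j\gamma}$ or of $\chi$ and again has strictly fewer bare derivatives, so the induction closes. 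Summing the inductive bounds over the $O(\mathrm{const}^{\,p})$ resulting terms without losing control of $B$ is the crux: the Leibniz factors $\binom{p}{\abs{\delta}}M_{\abs{\delta}}M_{p-\abs{\delta}}$ are tamed by the sharp inequality $\binom{j+k}{j}M_jM_k\le M_{j+k}$, and the cut-off losses $(\mathrm{const}\cdot p)^{\abs{\delta}}$ (in the nested-neighbourhood bookkeeping these appear divided by the gap and are matched against a splitting of the gap into $\approx p$ pieces) are tamed by $\mu_k\ge k$; both are exactly the content of \eqref{M1}.

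In short, \eqref{M1}, in the two incarnations $\binom{j+k}{j}M_jM_k\le M_{j+k}$ and $\mu_k\ge k$, is what replaces the moderate-growth hypothesis invoked in all earlier proofs of the Kotake--Narasimhan theorem, and I expect the main difficulty to be precisely to make the induction close with a constant $B$ that does not degrade along the unbounded chain of reductions, and to do so uniformly in $h$ in the Beurling case. Once the estimates hold on every $\omega_p$, letting $p\to\infty$ along the chosen scale recovers a bound on a fixed neighbourhood of $K$; since $K$ was an arbitrary compact subset of $\Omega$, this yields $u\in\DC{\bM}{\Omega}$.
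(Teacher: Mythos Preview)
Your Roumieu outline is sound and would go through; it is in the spirit of the paper but organised differently. The paper does not run a joint induction on $\norm{D^\beta P^\alpha u}{}$; instead it proves a single fundamental estimate (Proposition~\ref{MainProp}) of the form
\[
\rho^{\alp}\norm{D^\alpha u}{L^2(W[\Theta_{\alp}\rho])}\le A^{\alp+1}S_k(u),\qquad \alp\le dk,
\]
where $\Theta_j=j\sqrt[j]{m_j}$ and $S_k(u)$ is the sum of $\norm{P^\tau u}{L^2(W)}$ over $\lvert\tau\rvert\le k$. The nested balls carry the weight sequence via $\Theta_{\alp}$, and only afterwards is $\rho\sim 1/\Lambda_{dk}$ chosen so that the innermost ball contains a fixed $U$. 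Your key inequality $\binom{j+k}{j}M_jM_k\le M_{j+k}$ is exactly the content of \eqref{RegularEstimate} in the paper, and $\Theta_{k+1}\ge\Theta_k+1$ (Lemma~\ref{StrongLemma}) plays the role of your ``$\mu_k\ge k$''. One caution: your fixed scale $\omega_0\supset\omega_1\supset\cdots\supset K$ with gaps producing losses $(\mathrm{const}\cdot p)^{\lvert\delta\rvert}$ is not literally how the paper or \cite{MR548225} proceeds; the spacing must be $\bM$-adapted (here via $\Theta_k$), and a scale independent of $\bM$ would not close under \eqref{M1} alone.

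The Beurling case, however, has a genuine gap. ``Doing it uniformly in $h$'' is exactly what fails when the coefficients are only in $\Beu{\bM}{\Omega}$: the induction constant $B$ inherits the coefficient constants $C_h$, which blow up as $h\to 0$, so no re-running of the Roumieu argument produces arbitrarily small $B$. This is why all prior Beurling results (\cite{MR1875423}, \cite{MR3652556}) required coefficients in a strictly smaller class. The paper circumvents this by a different device. Proposition~\ref{KomatsuTrick} shows that for any sequence $\bL\lhd\bM$ there is an intermediate $\bN$ with $\bL\le\bN\lhd\bM$ still satisfying \eqref{M0} and \eqref{M1}. Taking $\bL$ to dominate simultaneously the iterate bounds on $u$ and the derivative bounds on the coefficients over a fixed ball $W$, one lands in the \emph{Roumieu} situation for $\bN$; the Roumieu proof then yields $\norm{D^\alpha u}{L^2(U)}\le C_1h_1^{\alp}N_{\alp+d}$ for some fixed $h_1$, and $\bN\lhd\bM$ converts this a posteriori into the full Beurling estimate for $\bM$. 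This reduction is the new ingredient that makes part~(2) sharp.
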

\begin{Rem}
	The notion of a weakly regular weight sequence is inspired by \cite{Dynkin80}.
	In that article a weight sequence $\bM$ is called regular if $\bM$ satisfies \eqref{M0}, \eqref{M2prime} and instead of \eqref{M1} the sequence $\bM$ is strongly logarithmic convex, i.e.
	\begin{equation}\label{StrLogConv}
		m_{k}^2\leq m_{k-1}m_{k+1}\qquad \fa k\in\N.
	\end{equation} 
In fact, \eqref{StrLogConv} implies \eqref{M1}.
\end{Rem}
\begin{Rem}\label{DC-Remark}
	We need to point out that
	Theorem \ref{Main-Weight} is a considerably more general statement then the previous known results,
	like \cite{Lions}, \cite{MR3090868} or \cite{ChailiDjilali18} in the Roumieu case
	and \cite{MR1875423} in the Beurling case.
	In these papers varying conditions on the weight sequence are assumed,
	but these conditions always include \eqref{non-quasi}, 
	\begin{gather}\label{StrLogConv2}
		m_{k}m_\ell\leq m_{k+\ell}\qquad \fa k,\ell\in\N_0\\
		\shortintertext{and}
		\label{M2}
		\ex \gamma>0:\; M_{j+k}\leq \gamma^{j+k+1} M_{j}M_k,\quad \fa j,k\in\N_0.
	\end{gather}
To compare these conditions with those of our result we may note that it is easy to see that
 \eqref{StrLogConv2} is another consequence of \eqref{StrLogConv}.
 However, if $\bM$ is a weight sequence we can see that \eqref{M1} implies
 also \eqref{StrLogConv2}: For $k=0$ we have that $m_0m_\ell=m_\ell$ for all $\ell\in\N_0$
 and the same is true for $\ell=0$ and all $k\in\N_0$.
 So we have to show $\eqref{StrLogConv2}$ for $k,\ell\in\N$, but then we have
 \begin{equation*}
 	m_km_\ell\leq m_{k+\ell}^{k/(k+\ell)}m_{k+\ell}^{\ell/(k+\ell)}=m_{k+\ell}
 \end{equation*}
by \eqref{M1}. Thence in that regard our conditions are formally more restrictive, but
this is compensated by the fact that we replaced
the other conditions noted above by far weaker conditions. 

   The most interesting property above in that regard is the last one.
It is clear that \eqref{M2} implies \eqref{M2prime}. 
However, \eqref{M2} is far more restrictive 
than \eqref{M2prime}, see e.g.~\cite{10.1215/kjm/1250520659}.
In fact, if a weight sequence $\bM$ satisfies \eqref{M2} then there is some $s>1$
such that $\DC{\bM}{\Omega}\subseteq\G^s(\Omega)$.
For example, the sequences $\bL^{q,r}$ cannot satisfy \eqref{M2} for any choice of $q,r>1$.

Moreover, we have replaced \eqref{non-quasi} by the  non-analyticity condition \eqref{M0}.
We will see in the next section, cf.~Remark \ref{StrongRemark}, that \eqref{M0}
is still nearly superfluous but we use it in order to allow for a unified formulation of 
Theorem \ref{Main-Weight}. This only excludes formally the analytic case, which is
well-known to hold.

We may also observe that in the Beurling case Theorem \ref{Main-Weight} is also a strict statement
in the sense that we allow that the coefficients are in the class given by the same weight sequence
as the space of vectors considered.
In contrast, for example \cite{MR1875423} requires that the coefficients of the operators
are in a strictly smaller class than the vectors considered.
We are able to remove this restriction by applying an argument given in \cite{MR550685}, which
essentially allows us to reduce the Beurling case to the Roumieu case.
%In the Roumieu case Theorem \ref{Main-Weight} is effectively a slight refinement of the
%isotropic form of \cite{MR3090868}. In particular, there
% the non-quasianalyticity condition 
%is additionally assumed for the weight sequence,
% however this condition was not used in its full force in that article.
 %We have replaced it with the non-analyticity condition \eqref{M0}. 
% We will see in the next section, cf.~Remark \ref{StrongRemark}, that \eqref{M0}
%is still nearly superfluous but we use it in order to allow for a unified formulation of 
%%Theorem \ref{Main-Weight}. This only excludes formally the analytic case, which is
%well-known to hold.

%But for Beurling classes Theorem \ref{Main-Weight} is a distinct improvement of the
%statement in \cite{MR1875423}: 
%In that paper  it is required that the coefficients of the operators are in a strictly smaller class then
%the structure, whose vectors are analyzed.
%We are able to remove this restriction by applying an argument given in \cite{MR550685}, which
%essentially allows us to reduce the Beurling case to the Roumieu case.
\end{Rem}

In the case of Braun-Meise-Taylor classes our main theorem boils down to the following statement.
Recall that a weight function $\omega$ is an increasing continuous function $\omega: [0,\infty)\rightarrow [0,\infty)$ satisfying $\omega(t)=0$ for $t\in [0,1]$ and
\begin{gather}
\tag{$\alpha$}	\omega(2t)=O(\omega(t)),\qquad t\longrightarrow\infty,\\
\tag{$\beta$}	\omega(t)=o(\log t),\qquad\, t\longrightarrow\infty,\\
\tag{$\gamma$}	\varphi_\omega:=\omega\circ\exp\text{ is convex}.
\end{gather}
The conjugate function $\varphi^\ast_\omega(t) =\sup_{s\geq0} 
(st-\varphi_\omega(s))$, $t\geq 0$, is 
also convex, continuous and increasing and $\varphi^{\ast\ast}_\omega=\varphi_\omega$.
A smooth function $f\in\E(\Omega)$ is an element of the Roumieu class 
$\Rou{\omega}{\Omega}$ (resp.\ the Beurling class $\Beu{\omega}{\Omega}$) if
for any  $U\Subset\Omega$ there are constants $C,h>0$ (resp.\ for all $h>0$ there is
a constant $C=C_{h,U}$) such that
\begin{equation*}
\sup_{x\in U}\abs*{D^\alpha f(x)}\leq Ce^{\tfrac{1}{h}\varphi_\omega^\ast(h\alp)}\qquad 
\fa\alpha\in\N_0^n.
\end{equation*}
%Similarly, if $\sP=\{P_1,\dotsc,P_\ell\}$ is a system of differential operators
%with coefficients of class $[\omega]$ in $U$ then
% $u\in \E(\Omega)$ is an element of the Roumieu space
%$\vRou[\sP]{\omega}{\Omega}$ (resp.\ of the Beurling class 
%$\vBeu[\sP]{\omega}{\Omega}$) of ultradifferentiable vectors if 
%for every $V\Subset\Omega$ there are $C,h>0$, (for every $V\Subset \Omega$ and all $h>0$
%there exists $C>0$) such that
%\begin{equation*}
%	\norm*{P^\alpha u}{L^2(V)}\leq Ce^{\tfrac{1}{h}\varphi^\ast_\omega(hd_\alpha)}.
%\end{equation*} 
\begin{Thm}\label{MainTheoremOmega}
	Let $\omega$ be a concave weight function such that $\omega(t)=o(t)$
	and $\sP=\{P_1,\dotsc,P_\ell\}$ be an elliptic system of differential operators.
	\begin{enumerate}
		\item Assume that all coefficients of the operators $P_j$, $j=1,\dotsc,\ell$,
		are in $\Rou{\omega}{\Omega}$.
		Then $u\in\Rou{\omega}{\Omega}$ if and only if $u\in\E(\Omega)$ and for all 
		$U\Subset \Omega$ there are constants $C,h>0$ such that
		\begin{equation*}
			\norm*{P^\alpha u}{L^2(U)}\leq Ce^{\tfrac{1}{h}\varphi_\omega^\ast(hd_\alpha)}
		\end{equation*}
	for all $\alpha\in\{1,\dotsc,\ell\}^k$ and every $k\in\N_0$.
	\item Assume that all coefficients of the $P_j$, $j=1,\dotsc,\ell$ are
	in $\Beu{\omega}{\Omega}$. 
	Then $u\in\Beu{\omega}{\Omega}$ if and only if $u\in\E(\Omega)$ and for all $U\Subset\Omega$
	and every $h>0$ there is a constant $C>0$ such that
	\begin{equation*}
		\norm*{P^\alpha u}{L^2(U)}\leq Ce^{\tfrac{1}{h}\varphi_\omega^\ast(hd_\alpha)}
	\end{equation*}
	for all $\alpha\in\{1,\dotsc,\ell\}^k$ and every $k\in\N_0$.
	\end{enumerate}
\end{Thm}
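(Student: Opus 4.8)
The plan is to deduce Theorem~\ref{MainTheoremOmega} from the Kotake--Narasimhan theorem for classes defined by weight matrices, so that no new hard analysis is required: it suffices to realize $\DC{\omega}{\Omega}$ as a weight-matrix class and to translate the hypotheses. To $\omega$ one associates the weight matrix $\fW_\omega=(W^{(\lambda)})_{\lambda>0}$ with
\[
	W^{(\lambda)}_k=\exp\Bigl(\tfrac1\lambda\varphi_\omega^\ast(\lambda k)\Bigr),\qquad k\in\N_0;
\]
each $W^{(\lambda)}$ is a weight sequence because $\varphi_\omega^\ast$ is convex, increasing and vanishes near the origin, and it is standard (see \cite{MR3285413,MR3462072}) that $\DC{\omega}{\Omega}=\DC{\fW_\omega}{\Omega}$, with the Roumieu parameter $\lambda$ ranging over a cofinal set and the Beurling parameter over a coinitial one.

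The second step is to identify the iterate hypotheses. Since $e^{\frac1h\varphi_\omega^\ast(hd_\alpha)}=W^{(h)}_{d_\alpha}$, the estimate in part~(1) says exactly that there exist $\lambda,C>0$ with $\norm{P^\alpha u}{L^2(U)}\le C\,W^{(\lambda)}_{d_\alpha}$ for all $\alpha$, which is the Roumieu iterate condition for $\fW_\omega$; the multiplicative factor $h^{d_\alpha}$ that the weight-matrix formulation naturally carries (just as in Theorem~\ref{Main-Weight}) is absorbed by passing to a larger parameter, using that condition $(\alpha)$ for $\omega$ gives, for every $\lambda>0$ and every $H>0$, some $\mu>\lambda$ and $C\ge1$ with $H^kW^{(\lambda)}_k\le C\,W^{(\mu)}_k$ for all $k$. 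Part~(2) is treated identically with the quantifiers over $h$ and over the matrix parameter interchanged, now using the coinitial counterpart of this rescaling, again a consequence of $(\alpha)$.

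It remains to check that the assumptions on $\omega$ put $\fW_\omega$ in the scope of the weight-matrix theorem. The stability of $\fW_\omega$ under products, derivatives and geometric rescaling is precisely what $(\alpha)$ provides; the non-analyticity condition \eqref{M0} for $W^{(\lambda)}$, i.e.\ $\lim_k(W^{(\lambda)}_k/k!)^{1/k}=\infty$, is equivalent to $\omega(t)=o(t)$; and concavity of $\omega$, which amounts to $\omega(t)/t$ being decreasing, guarantees that $\sqrt[k]{W^{(\lambda)}_k/k!}$ is eventually increasing for each $\lambda$, hence (after the harmless replacement of $W^{(\lambda)}$ by an equivalent weight sequence) the monotonicity \eqref{M1}, and with it the analogue of \eqref{StrLogConv2} appearing in Remark~\ref{DC-Remark}. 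With these properties the weight-matrix theorem applies and yields Theorem~\ref{MainTheoremOmega}(1); for part~(2) one additionally runs the descent from the Beurling to the Roumieu case in the spirit of \cite{MR550685}, exactly as for Theorem~\ref{Main-Weight}(2).

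The genuine difficulty lies not in this reduction but in the weight-matrix Kotake--Narasimhan theorem it depends on, whose proof follows the Morrey--Nirenberg scheme of a priori $L^2$-estimates on a scale of nested neighbourhoods, adapting \cite{MR548225} and \cite{ChailiDjilali18}. Within the reduction itself the only delicate points are the bookkeeping---tracking the matrix parameter $\lambda$ against the multiplicative constant $h$ once both have been compressed into the single convex function $\varphi_\omega^\ast$---and verifying that the cofinal/coinitial passages stay compatible with the Beurling-to-Roumieu descent.
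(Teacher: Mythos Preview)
Your reduction is correct and is exactly the route the paper takes: Theorem~\ref{MainTheoremOmega} is not proved independently but is obtained from the weight-matrix result Theorem~\ref{localThmIterates} via Remark~\ref{OmegaRemark}, which records that $\DC{\omega}{\Omega}=\DC{\fW}{\Omega}$ and that concavity of $\omega$ together with $\omega(t)=o(t)$ yields an equivalent weight matrix $\fT$ which is weakly $R$- and weakly $B$-regular (citing \cite{RainerSchindl2020} for the strong logarithmic convexity). One small simplification: the Beurling-to-Roumieu descent you mention for part~(2) is already contained in Theorem~\ref{localThmIterates} (see Section~\ref{BeurlingProof}), so you need not invoke it a second time---once $\fT$ is weakly $B$-regular, Theorem~\ref{localThmIterates} applies directly in the Beurling case as well.
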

\begin{Rem}
	Theorem \ref{MainTheoremOmega} generalizes the main theorem in \cite{MR3652556} from a single 
elliptic operator to elliptic systems of operators.
Furthermore, in the Beurling case
the Kotake-Narasimhan theorem 
was proven in \cite{MR3652556}, again,
 only for operators whose coefficients are in a strictly smaller class 
$\Beu{\tau}{\Omega}\subsetneq \Beu{\omega}{\Omega}$.
\end{Rem}
As we have announced above
we are going to prove our main Theorem \ref{localThmIterates}
in the far more general setting of weight matrices,
that is countable families of weight sequences.
We utilize the approach of \cite{MR548225}, which in particular allows us to apply the
aforementioned technique
of \cite{MR550685} 
in order to prove the Beurling version of Theorem \ref{localThmIterates} by effectively reducing
it to the Roumieu case even in the case of weight matrices.
However, we need to point out that in general it is not possible to use this technique in the  case
of non-trivial weight matrices, see for example \cite[Section 7]{FURDOS2020123451}.

The structure of the paper is the following: In Section \ref{Structure} we recall basic definitions
and facts about weight sequences, weight matrices and the ultradifferentiable classes of functions
and vectors generated by
them. Having sufficiently developed the theory of ultradifferentiable structures given by
weight matrices we can formulate our main result Theorem \ref{localThmIterates} at the end
of Section \ref{Structure}. In Section \ref{Sec:Estimates} we prove the fundamental $L^2$-estimate
which is used in Section \ref{Sec:Proof} to prove Theorem \ref{localThmIterates}.
We conclude the paper by stating some remarks in Section \ref{Remarks}.

\section{Ultradifferentiable structures}\label{Structure}
	\subsection{Weight sequences}

The following properties of a weight sequence are well known.

\begin{Lem}\label{weakLemma}
	Let $\bM$ be a weight sequence and set $\Lambda_k=\sqrt[k]{M_k}$
	and $\mu_k=\frac{M_k}{M_{k-1}}$ for $k\in\N$. Then
	the following holds:
	\begin{enumerate}
		\item $M_jM_k\leq M_{j+k}$ for all $j,k\in\N_0$.
		\item The sequence $\Lambda_k$ is increasing, i.e.\ $\Lambda_k\leq \Lambda_{k+1}$ for all $k\in\N$.
		\item The sequence $\mu_k$ is increasing.
		\item $\Lambda_k\leq \mu_k$ for all $k$.
		%\item For all $j,k\in\N$ we have
	%	\begin{equation}\label{MuConsequence}
		%	\bigl(\mu_k\bigr)^{j-k}\leq \frac{M_j}{M_k}.
	%	\end{equation}
%	\item $\sqrt[k]{M_k}\leq \mu_k$ for all $k\in\N$.
		%\item The sequence $(\sqrt[k]{M_k})_k$ is increasing.
		%\item We have $\sqrt[k]{M_k}\leq \mu_k$ for all $k\in\N$.
	\end{enumerate}
\end{Lem}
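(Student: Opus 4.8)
The plan is to derive all four items from the single observation that the quotient sequence $(\mu_k)_{k\in\N}$ is nondecreasing, together with the telescoping identity $M_k=\prod_{i=1}^k\mu_i$ (which holds for $k\in\N$ because $M_0=1$). I would therefore prove item (3) first, record the product identity, and then read off (1), (4) and (2) in turn.

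For item (3): since $\dfrac{\mu_{k+1}}{\mu_k}=\dfrac{M_{k+1}M_{k-1}}{M_k^2}$, the weak log-convexity \eqref{WeakLogConvex} gives $\mu_{k+1}\ge\mu_k$ for every $k\in\N$ at once. Using the product identity, item (4) is then immediate: $M_k=\prod_{i=1}^k\mu_i\le\mu_k^{\,k}$ because $\mu_i\le\mu_k$ for $i\le k$, and taking $k$-th roots yields $\Lambda_k\le\mu_k$.

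For item (1), the cases $j=0$ or $k=0$ are trivial since $M_0=1$, so assume $j,k\in\N$ and write
\[
\frac{M_{j+k}}{M_jM_k}=\frac{\prod_{i=1}^{j+k}\mu_i}{\bigl(\prod_{i=1}^{j}\mu_i\bigr)\bigl(\prod_{i=1}^{k}\mu_i\bigr)}=\prod_{i=1}^{k}\frac{\mu_{j+i}}{\mu_i}\ge 1,
\]
the last step because $\mu_{j+i}\ge\mu_i$ for each $i$ by (3). For item (2) I would verify the equivalent inequality $M_k^{\,k+1}\le M_{k+1}^{\,k}$; via the product identity this becomes $\prod_{i=1}^{k}\mu_i\le\mu_{k+1}^{\,k}$, which holds again by monotonicity of $(\mu_k)$, and raising both sides to the power $\tfrac1{k(k+1)}$ gives $\Lambda_k\le\Lambda_{k+1}$.

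There is no real obstacle here; the only points requiring a little care are the boundary cases $j=0$ or $k=0$ in (1) and the fact that $\mu_k$ is only defined for $k\ge1$, so every manipulation with the product representation must be carried out for positive indices (the case $k=1$ in (2), namely $M_1^2\le M_2$, being exactly \eqref{WeakLogConvex} with $M_0=1$).
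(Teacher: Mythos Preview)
Your proof is correct; the paper itself does not supply a proof of this lemma, stating only that these properties of a weight sequence are well known. Your derivation of everything from the monotonicity of $(\mu_k)$ via the telescoping identity $M_k=\prod_{i=1}^k\mu_i$ is the standard route and is carried out cleanly, including the edge cases you flag.
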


%\begin{proof}
%	If \eqref{WeakLogConvex} holds then 
%	\begin{equation*}
	%	\mu_k\leq \frac{M_k}{M_{k-1}}\leq \frac{M_{k+1}}{M_k}=\mu_{k+1}
%	\end{equation*}
%for all $k\in\N$. Hence (2) is proven.
%We prove (1) by induction on $k$. Since $M_0=1$ we have $M_jM_0=M_j$.
%If we assume that $M_jM_k\leq M_{j+k}$ for all $j\in\N_0$ and fixed $k$ then (2) implies
%\begin{equation*}
%	M_jM_{k+1}=M_jM_k\mu_{k+1}\leq M_{j+k}\mu_{j+k+1}=M_{j+k+1}.
%\end{equation*}

%Finally, if $j> k$ then
%\begin{equation*}
%	M_k(\mu_k)^{j-k}\leq M_k\prod_{\ell=k+1}^j \mu_\ell=M_{j}.
%\end{equation*}
%On the other hand, if $k\geq j$ then
%\begin{equation*}
%M_k=M_{j-1}\prod_{\ell=j}^{k}\mu_{\ell}\leq M_{j-1}\mu_j(\mu_k)^{k-j}=M_j(\mu_k)^{k-j}.	
%\end{equation*}
%Therefore (3) holds.
%\end{proof}

For later convenience we shall take a closer look at the structure of Denjoy-Carleman classes,
for more details see \cite{Komatsu73}. Here we do not to require that $\bM$ is a weight sequence.

So let $U\subseteq{\Real}^n$ be an open set, $\bM$ be an increasing sequence of positive numbers and $h>0$ be a parameter.
Then $\E(\overline{U})$ is the space of smooth functions $f$ on $U$ such that $\partial^\alpha f$
extends continuously to $\overline{U}$ and 
 we define a seminorm on $\E(\overline{U})$ by setting
\begin{equation*}
	\norm*{f}{\bM,U,h}:=\sup_{\substack{x\in U\\ \alpha\in\N_0^n}}\frac{\abs*{D^\alpha f(x)}}{h^\alp M_\alp}.
\end{equation*}
Thus
\begin{equation*}
	\Ban{\bM}{U}=\Set*{f\in\E\bigl(\overline{U}\bigr)\given \norm*{f}{\bM,U,h} <\infty}
\end{equation*}
is a Banach space.
Then $\gRou{\bM}{U}=\ind_{h>0}\Ban{\bM}{U}=\bigcup_{h>0} \Ban{\bM}{U}$ resp.\
$\gBeu{\bM}{U}=\proj_{h>0}\Ban{\bM}{U}=\bigcap_{h>0}\Ban{\bM}{U}$ is the 
Roumieu resp.\ Beurling class of global ultradifferentiable functions associated to 
the weight sequence $\bM$ over $U$.

The local ultradifferentiable classes $\DC{\bM}{U}$ associated to $\bM$ over $U$ 
can thus be described as
$\DC{\bM}{U}=\proj_{V\Subset U}\gDC{\bM}{U}$.
%Thus, for $f\in\E(U)$ we have
%\begin{align*}
%	f&\in\gRou{\bM}{U} & &\Longleftrightarrow & \ex h>0\ex C>0:\; 
%	\abs*{D^\alpha f(x)}&\leq Ch^\alp M_\alp \qquad \alpha\in\N_0^n,\, x\in U,\\
%	f&\in \gBeu{\bM}{U} & &\Longleftrightarrow & \fa h>0\ex C>0:\; 
%	\abs*{D^\alpha f(x)}&\leq Ch^\alp M_\alp\qquad \alpha\in\N_0^n,\, x\in U,\\
%	f&\in\Rou{\bM}{U} & &\Longleftrightarrow & \fa V\Subset U \ex h>0 \ex C>0:\;
%	\abs*{D^\alpha f(x)}&\leq Ch^\alp M_\alp \qquad \alpha\in\N_0^n,\, x\in V,\\
%	f&\in\Beu{\bM}{U}  & &\Longleftrightarrow & \fa V\Subset U \fa h>0 \ex C>0:\;
%	\abs*{D^\alpha f(x)}&\leq Ch^\alp M_\alp \qquad \alpha\in\N_0^n,\, x\in V.
%\end{align*}
If $\bM$ and $\bN$ are two sequences we write
\begin{align*}
	\bM\leq\bN\quad &:\Longleftrightarrow \quad \fa k\in\N_0: M_k\leq N_k,
	\\
	\bM\preceq\bN\quad &:\Longleftrightarrow \quad \left(\tfrac{M_k}{N_k}\right)^{1/k}
	\text{ is bounded for } k\rightarrow \infty,
	\\
	\bM\lhd\bN\quad &:\Longleftrightarrow \quad \left(\tfrac{M_k}{N_k}\right)^{1/k}\longrightarrow
	0 \text{ if } k\rightarrow \infty.
\end{align*}
If $\bM\preceq\bN$ then $\gDC{\bM}{U}\subseteq\gDC{\bN}{U}$ and $\DC{\bM}{U}\subseteq\DC{\bN}{U}$.
We are going to write $\bM\approx\bN$ if $\bM\preceq\bN$ and $\bN\preceq\bM$.
Furthermore $\bM\lhd\bN$ implies $\gRou{\bM}{U}\subseteq\gBeu{\bN}{U}$ and $\Rou{\bM}{U}\subseteq\Beu{\bN}{U}$.

%Let $\sP=\{P_1,\dotsc,P_\ell\}$ be a system of differential operators of the form
%\begin{equation*}
%	P_{j}=P_j(x,D)=\sum_{\lamb\leq d_j}a_{j\lambda}D^\lambda
%\end{equation*}
%with $a_{j\lambda}\in\E(V)$.
%If $\sP$ is a system of operators with coefficients in 

We recall that to a weight sequence $\bM$ (or an abritary sequence) we associate another sequence $m_k=M_k/k!$.
It follows from above that the condition
\begin{equation}
	\lim_{k\rightarrow\infty}\sqrt[k]{m_k}=\infty\label{AnalyticInclusion1}
\end{equation}
implies that $\An(U)\subsetneq\DC{\bM}{U}$.
We are also discussing the other conditions appearing in the beginning of this article, e.g.\
	\begin{gather}
		m_k^2\leq m_{k-1}m_{k+1},\qquad \fa k\in\N,\label{StrongLogConvex}\\
		\ex C>0:\; M_{k+1}\leq C^{k+1}M_k\quad \fa k\in\N_0,\label{DClosed}
	%\label{moderateGrowth}	\ex C>0:\; M_{k+1}\leq C^{k+1}M_k\quad \fa k\in\N_0.
	\end{gather}

\begin{Rem}\label{WeightRemark}
	\begin{enumerate}
		\item If $\bM$ is a weight sequence satisfying 
		\eqref{AnalyticInclusion1} and \eqref{StrongLogConvex} then clearly
		$\mathbf{m}=(m_k)_k$ is also a weight sequence. In particular
		\begin{equation}\label{strongConcl}
			m_{j}m_k\leq m_{j+k}\qquad j,k\in\N_0.
		\end{equation}
	Furthermore, we have the following property:
	\begin{equation}\label{RootIncreasing}
		\text{The sequence }\sqrt[k]{m_k}=\frac{\Lambda_k}{k!^{1/k}} \text{ is increasing.}
	\end{equation}
Therefore the sequence $\Lambda_k$ is strictly increasing.
	 \item The estimate \eqref{DClosed} is equivalent to
	 \begin{equation*}
	 	\ex C>0:\; m_{k+1}\leq C^{k+1}m_k\qquad \fa k\in\N_0.
	\end{equation*}
%\item If $\bM$ is a weight sequence satisfying $\eqref{moderateGrowth}$ then
%there is some $s\geq1$ such that $\DC{\bM}{\Omega}\subseteq\G^s(\Omega)$, 
%see e.g.~\cite{matsumotopseudo}.
	\end{enumerate}
\end{Rem}
For later use we note the following Lemma.
\begin{Lem}\label{StrongLemma}
	Let $\bM$ be a sequence with $M_0=1\leq M_1$ satisfying \eqref{AnalyticInclusion1}.
 Then the following statements hold:
	\begin{enumerate}
		\item If \eqref{StrongLogConvex} is satisfied then $\mu_k+1\leq \mu_{k+1}$ for all $k\in\N$.
	\item Assume that the sequence $\sqrt[k]{m_k}$ is increasing, i.e.~\eqref{RootIncreasing} holds.
	If we consider the sequence 
	$\Theta_k=k\sqrt[k]{m_k}$ then we have also that
	\begin{equation}
		\Theta_k+1\leq\Theta_{k+1}\qquad \fa k\in\N.\label{StrongCor}
	\end{equation}
	\end{enumerate}
\end{Lem}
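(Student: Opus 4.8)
The plan is to notice that in both parts the sequence whose discrete ``derivative'' we must bound below by $1$ factors as $k$ times an increasing sequence that is itself bounded below by $1$; once this is observed, the inequality drops out of a one-line telescoping computation. So the real content is just reading the hypotheses \eqref{StrongLogConvex} and \eqref{RootIncreasing} in the right variables.

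For part (1), I would first rephrase \eqref{StrongLogConvex}: writing $\nu_k:=m_k/m_{k-1}$ for $k\in\N$, the inequality $m_k^2\leq m_{k-1}m_{k+1}$ is exactly the statement that $(\nu_k)_k$ is nondecreasing. Since $m_0=M_0/0!=1$ and $m_1=M_1/1!=M_1\geq 1$ by hypothesis, we get $\nu_1=M_1\geq 1$, hence $\nu_k\geq 1$ for every $k\in\N$. Because $\mu_k=M_k/M_{k-1}=k\,m_k/m_{k-1}=k\nu_k$, it follows that
\begin{equation*}
	\mu_{k+1}=(k+1)\nu_{k+1}\geq (k+1)\nu_k=k\nu_k+\nu_k\geq \mu_k+1,
\end{equation*}
which is the claimed estimate.

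Part (2) is formally the same argument with $\nu_k$ replaced by $\theta_k:=\sqrt[k]{m_k}$. By the hypothesis \eqref{RootIncreasing} the sequence $(\theta_k)_k$ is nondecreasing, and $\theta_1=m_1=M_1\geq 1$, so $\theta_k\geq 1$ for all $k\in\N$. Since $\Theta_k=k\theta_k$ by definition, we obtain
\begin{equation*}
	\Theta_{k+1}=(k+1)\theta_{k+1}\geq (k+1)\theta_k=k\theta_k+\theta_k\geq \Theta_k+1.
\end{equation*}

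The only point that requires a moment of care is that it is the normalization $M_1\geq 1$ — not the non-analyticity condition \eqref{AnalyticInclusion1} — that guarantees $\nu_1\geq 1$ and $\theta_1\geq 1$, so that the multiplicative increments start at a value $\geq 1$; the hypothesis \eqref{AnalyticInclusion1} is in fact not used in either part and is carried along only as a standing assumption of this subsection. I do not expect a genuine obstacle: once the factorizations $\mu_k=k\nu_k$ and $\Theta_k=k\theta_k$ are written down and the monotonicity of $(\nu_k)_k$, $(\theta_k)_k$ is read off from \eqref{StrongLogConvex} and \eqref{RootIncreasing}, both conclusions are immediate.
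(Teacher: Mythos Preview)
Your proof is correct and is essentially the same as the paper's: both arguments factor $\mu_k=k\,(m_k/m_{k-1})$ resp.\ $\Theta_k=k\sqrt[k]{m_k}$ and use that the second factor is nondecreasing and $\geq 1$. The paper routes part~(1) through Lemma~\ref{weakLemma}(3) applied to the weight sequence $\mathbf{m}$, which amounts to exactly your observation that $\nu_k=m_k/m_{k-1}$ is increasing; your remark that \eqref{AnalyticInclusion1} is not actually used in the argument is also correct.
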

\begin{proof}
	Recall from Remark \ref{WeightRemark} that $\mathbf{m}$ is a weight sequence
	and hence we can apply Lemma \ref{weakLemma}.
	By Lemma \ref{weakLemma}(3) we thus have that the sequence $\mu_k/k$ is increasing and therefore
	\begin{equation*}
		\mu_k+\frac{\mu_k}{k}\leq \mu_{k+1}\qquad \fa k\in\N.
	\end{equation*}
This gives instantly (1) since $\mu_k/k\geq 1$.

	For (2) we need only to observe that
	\begin{equation*}
		k\sqrt[k]{m_k}+1\leq k\sqrt[k+1]{m_{k+1}}+1\leq (k+1)\sqrt[k+1]{m_{k+1}}
	\end{equation*}
for all $k\in\N$ since $\sqrt[k]{m_k}\geq 1$.
\end{proof}
\begin{Rem}\label{StrongRemark}
	We recall that condition \eqref{AnalyticInclusion1} %is equivalent to the fact that
	implies that $\An(U)\subsetneq\DC{\bM}{U}$.
	However, it is nearly superfluous in view of \eqref{StrongLogConvex}:
	Iterating the estimate in Lemma \ref{StrongLemma}(1)
	 we obtain that for any strongly logarithmically convex weight 
	sequence $\bM$ 
	 we have that $k\leq \mu_k$ and therefore $k!\leq M_k$.
In particular $(k!)\preceq M_k$  which in turn implies
	that $\An(U)\subseteq\Rou{\bM}{U}$.
In fact, for that relation to hold it is enough to assume that $\sqrt[k]{m_k}$ is increasing,
since this gives also that $\liminf_{k\rightarrow\infty}\sqrt[k]{m_k}>0$ (recall that $m_1\geq 1$ by
assumption).

	In the Beurling situation, we know that \eqref{AnalyticInclusion1} holds if and only
	if $\An(\Real)\subseteq\Beu{\bM}{\Real}$. 
	Thus we only formally exclude the analytic case by our assumptions, which of course has been well investigated.
\end{Rem}

In order to deal later with the Beurling case we need the following statement, which will allow us to reduce the key argument in the Beurling case to the Roumieu case.
Its proof is based on the proof of \cite[Lemma 6]{MR550685},
cf.\ also \cite[Lemma 2.2]{zbMATH07538293}.
\begin{Prop}
\label{KomatsuTrick}
	Let $\bM$ be a weight sequence satisfying \eqref{AnalyticInclusion1} and \eqref{RootIncreasing}.
	If $\bL=(L_k)_k$ is a sequence of positive numbers
	such that $\bL\lhd\bM$
	then there is a sequence $\bN$ with $N_0=1\leq N_1$ satisfying \eqref{AnalyticInclusion1} and
	\eqref{RootIncreasing} such that 
	\begin{equation*}
		\bL\leq\bN\lhd\bM.
	\end{equation*}
\end{Prop}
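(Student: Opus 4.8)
The plan is to interpolate between $\bL$ and $\bM$ by a sequence that grows slowly enough to stay $\lhd\bM$ but retains the structural properties \eqref{AnalyticInclusion1} and \eqref{RootIncreasing}. The cleanest way is to work with the associated sequences $\ell_k = L_k/k!$ and $m_k = M_k/k!$, and to build $n_k = N_k/k!$ directly, because \eqref{RootIncreasing} is simply the statement that $\sqrt[k]{n_k}$ is increasing, which is much easier to engineer for the quotient sequence than for $N_k$ itself. Note first that $\bL \lhd \bM$ exactly says $(\ell_k/m_k)^{1/k} \to 0$, i.e. $\sqrt[k]{\ell_k} = o(\sqrt[k]{m_k})$; since by \eqref{AnalyticInclusion1} we have $\sqrt[k]{m_k}\to\infty$, this does not yet force $\ell_k$ to be well behaved, but it gives us a lot of room.

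First I would define an auxiliary increasing sequence. Set $a_k = \sqrt[k]{m_k}$, which is increasing and tends to $\infty$ by \eqref{RootIncreasing} and \eqref{AnalyticInclusion1}. Choose a sequence $\epsilon_k \downarrow 0$ slowly — say $\epsilon_k = 1/\sqrt{\log(k+e)}$ — and put $b_k = \epsilon_k a_k$. This $b_k$ need not be increasing, so replace it by its increasing minorant $c_k = \min_{j\ge k} b_j$; since $b_k \to \infty$ (because $a_k\to\infty$ much faster than $\epsilon_k\to 0$), we have $c_k\to\infty$ as well, $c_k$ is increasing, and $c_k \le b_k = \epsilon_k a_k$, so $c_k/a_k \to 0$. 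Now I want $\sqrt[k]{n_k}$ to be increasing and to dominate $\sqrt[k]{\ell_k}$ while being dominated by a multiple of $c_k$. Define $d_k = \max(c_k, \sup_{j\le k}\sqrt[j]{\ell_j}\,)$ — but one must be careful that $\sup_{j\le k}\sqrt[j]{\ell_j}$ is finite and still $o(a_k)$; this is where the hypothesis $\bL\lhd\bM$ is used quantitatively: $\sqrt[j]{\ell_j}\le \delta_j a_j$ with $\delta_j\to 0$, and since $a_j$ is increasing this gives $\sup_{j\le k}\sqrt[j]{\ell_j} \le (\sup_{j\le k}\delta_j a_j)$, which one shows is still $o(a_k)$ by splitting the sup at an index where $\delta_j$ has become small. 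So $d_k$ is increasing, $d_k = o(a_k)$, and $\sqrt[k]{\ell_k} \le d_k$. Finally set $n_k = d_k^{\,k}$ (with $n_0 = 1$), so that $\sqrt[k]{n_k} = d_k$ is increasing — giving \eqref{RootIncreasing} — and $\sqrt[k]{n_k} = d_k \ge c_k \to \infty$, giving \eqref{AnalyticInclusion1}. Then $N_k := k!\, n_k$ satisfies $L_k = k!\,\ell_k \le k!\, n_k = N_k$ since $\ell_k = (\sqrt[k]{\ell_k})^k \le d_k^k = n_k$, and $\sqrt[k]{N_k/M_k} = d_k/a_k \to 0$, i.e. $\bN \lhd \bM$. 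One also checks $N_0 = 1$ and, after possibly enlarging $N_1$, $N_1 \ge 1$, which does not disturb monotonicity of $\sqrt[k]{n_k}$ for $k\ge 1$ if $d_1$ is taken $\ge 1$ (replace $d_k$ by $\max(d_k,1)$ at the outset; this changes nothing asymptotically).

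The main obstacle is the bookkeeping in the middle step: ensuring simultaneously that the interpolating root-sequence $d_k$ is \emph{increasing}, \emph{unbounded}, \emph{dominates} $\sqrt[k]{\ell_k}$, and is \emph{$o(\sqrt[k]{m_k})$}. Unboundedness and monotonicity pull against the $o(a_k)$ requirement, and domination of $\sqrt[k]{\ell_k}$ is delicate because a priori $\ell_k$ carries no regularity at all — only the comparison $\bL\lhd\bM$. The trick that makes it work is precisely the device used in \cite[Lemma 6]{MR550685}: one does not try to control $\bL$ pointwise but only through the decaying gauge $\delta_k = (\ell_k/m_k)^{1/k}\to 0$, and one inserts a second, even more slowly decaying gauge $\epsilon_k$ with $\delta_k = o(\epsilon_k)$ eventually, so that there is a guaranteed gap $\sqrt[k]{\ell_k} \le \delta_k a_k \le \epsilon_k a_k$ for large $k$ into which the increasing minorant $c_k$ can be slotted. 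Taking the max with $\sup_{j\le k}\sqrt[j]{\ell_j}$ then repairs the finitely many small indices without destroying the asymptotics. Once the correct $d_k$ is in hand, setting $N_k = k!\,d_k^{\,k}$ makes all four required conclusions immediate.
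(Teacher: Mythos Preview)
Your argument is correct in structure and takes a somewhat more elementary route than the paper. One caveat: the specific choice $\epsilon_k = 1/\sqrt{\log(k+e)}$ does not work for all admissible $\bM$, since you need $b_k = \epsilon_k a_k \to \infty$ while the hypotheses give only $a_k\to\infty$ with no quantitative rate (e.g.\ $a_k$ could itself grow like a power of $\log k$). Taking instead $\epsilon_k = 1/\sqrt{a_k}$ fixes this immediately; note also that your construction as written never actually uses the relation $\delta_k = o(\epsilon_k)$ you mention at the end, since the running max $\sup_{j\le k}\sqrt[j]{\ell_j}$ is handled by the splitting argument independently of $\epsilon_k$.

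The paper's proof differs mainly in how it manufactures the increasing gauge. Rather than choosing $\epsilon_k$ by hand and passing to an increasing minorant, it uses Komatsu's log-convexification device: writing $\bL\lhd\bM$ as $\ell_k \le C_h h^k m_k$ for every $h>0$, it sets $\tilde\ell_k = \inf_{h>0} C_h^\bullet h^k m_k$ (with $C_h^\bullet$ optimal) and $\bar\ell_k = \tilde\ell_k/\tilde\ell_0$. One then checks that $(m_k/\bar\ell_k)_k$ is logarithmically convex, so that $c_k = (m_k/\bar\ell_k)^{1/k}$ is \emph{automatically} increasing and tends to $\infty$. The interpolant is then $\sqrt[k]{n_k} = \max\bigl\{\sqrt{a_k},\, \max_{j\le k}\sqrt[j]{\bar\ell_j}\bigr\}$, and the verification that $\bN\lhd\bM$ uses exactly the split-the-sup argument you describe. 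Your approach buys transparency---no log-convexity lemma is invoked, and the construction is plainly ``pick a slow gauge, take minorants and running maxima.'' The paper's approach buys a canonical regularization $\bar\ell_k$ of $\ell_k$ whose ratio to $m_k$ is log-convex, which is closer in spirit to the original device in \cite{MR550685} and avoids any ad hoc choice of $\epsilon_k$.
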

\begin{proof}
	We set $m_k=M_k/k!$ and $\ell_k=L_k/k!$.
	Then the condition $\bL\lhd\bM$ implies that for all $h>0$ there is a constant $C_h$ such that
	\begin{equation}\label{lhd}
		\ell_k\leq C_h h^km_k\qquad \fa k\in\N_0.
	\end{equation}
For $h>0$ we take $C_h^\bullet$ to be the smallest number such that \eqref{lhd} holds.
We set
\begin{equation}\label{infimum}
	\tilde{\ell}_k=\inf_{h>0} C^\bullet_hh^km_k,\qquad k\in\N_0.
\end{equation}
and define a auxillary sequence by $\bar{\ell}_k= \tilde{\ell}_k/\tilde{\ell}_0$.
Since  the infimum in \eqref{infimum} is assumed for some $h>0$ we have that
\begin{equation*}
	\left(\frac{m_k}{\bar{\ell}_k}\right)^2=\frac{\tilde{\ell}_0}{C_h^\bullet h^{k-1}}\frac{\tilde{\ell}_0}{C_h^\bullet h^{k+1}}
	\leq \frac{m_{k-1}}{\bar{\ell}_{k-1}}\frac{m_{k+1}}{\bar{\ell}_{k+1}},\qquad k\in\N.
\end{equation*}
Hence the sequence $(m_k/\bar{\ell}_k)_k$ is logarithmic convex and by definition $\bar{\ell}_0= 1$
and therefore the sequence
\begin{equation*}
	c_k=\frac{\sqrt[k]{m_k}}{\sqrt[k]{\bar{\ell}_k}}
\end{equation*}
is increasing since Lemma \ref{weakLemma}(2) still holds in that situation.
Furthermore $c_k\rightarrow \infty$ because $\bL\lhd\bM$.

Now we define the sequence $\bN=(N_k)_k$ by $N_0=1$ and $N_k=k!n_k$ for $k\in\N$, where
\begin{equation*}
	\sqrt[k]{n_k}=\max\left\{\sqrt[2k]{m_k};\max_{j\leq k}\frac{\sqrt[j]{m_j}}{c_j}\right\}.
\end{equation*}
Clearly the sequence $\sqrt[k]{n_k}$ is increasing.
Moreover, $\sqrt[k]{n_k}\geq \sqrt[2k]{m_k}\rightarrow\infty$ for $k\rightarrow\infty$.
Thence the sequence $\bN$ satisfies the conditions \eqref{AnalyticInclusion1}
and \eqref{RootIncreasing}.

Finally we observe that $L_k\leq k!\bar{\ell}_k\leq k! n_k=N_k$
and
\begin{equation*}
	\sqrt[k]{\frac{n_k}{m_k}}=\max\left\{\frac{\sqrt[2k]{m_k}}{\sqrt[k]{m_k}};
	\max_{{j\leq k}}\frac{\sqrt[j]{m_j}}{c_j\sqrt[k]{m_k}}\right\}
	\longrightarrow 0.
\end{equation*}
Thence $\bL\leq\bN\lhd\bM$.
%In order to show that $\bN$ is a weight sequence we now only need to verify 
%\eqref{WeakLogConvex}. But we have that
%\begin{equation*}
%	N_k=\max\left\{\sqrt{(k!)M_k};k!\frac{m_k}{\bar{\ell}_k}\right\}
%\end{equation*}
\end{proof}
\subsection{Weight matrices}
Now we are in the position to introduce the concept of weight matrices.
\begin{Def}
	A weight matrix $\fM$ is a family of weight sequences such that for all $\bM,\bN\in\fM$ we
	have either $\bM\leq\bN$ or $\bN\leq \bM$.
\end{Def}
If $\fM$ is a weight matrix then the Roumieu class of global ultradifferentiable functions associated
to $\fM$ is 
\begin{align*}
	\gRou{\fM}{U}&=\ind_{\substack{\bM\in\fM\\h>0}}\Ban{\bM}{U}\\
	\intertext{whereas the Beurling class is}
	\gBeu{\fM}{U}&=\proj_{\substack{\bM\in\fM\\h>0}}\Ban{\bM}{U}.\\
	\intertext{Then the local ultradifferentiable classes associated to $\fM$ are given by}
	\DC{\fM}{U}&=\proj_{V\Subset U}\gDC{\fM}{V}.
\end{align*}
If $\fM$ and $\fN$ are two weight matrices then we set
\begin{align*}
	\fM&\{\preceq\}\fN & &\Longleftrightarrow &  \fa \bM\in\fM \;\ex \bN\in\fM :\;\; \bM&\preceq\bN,\\
	\fM&(\preceq)\fN& &\Longleftrightarrow &\fa \bN\in\fN \; \ex\bM\in\fM :\;\;\bM&\preceq\bN,\\
	\fM&(\lhd\}\fN & &\Longleftrightarrow &\fa\bM\in\fM\;\fa \bN\in\fN:\;\;\bM&\lhd\bN.
\end{align*}
We write $[\preceq]=\{\preceq\},(\preceq)$. Clearly $\gDC{\fM}{U}\subseteq\gDC{\fN}{U}$,
$\DC{\fM}{\Omega}\subseteq\DC{\fN}{\Omega}$ if $\fM[\preceq]\fN$ and
$\gRou{\fM}{U}\subseteq\gBeu{\fN}{U}$, $\Rou{\fM}{\Omega}\subseteq\Beu{\fN}{\Omega}$ when
$\fM(\lhd\} \fN$.
We set also $\fM[\approx]\fN$ if $\fM[\preceq]\fN$ and $\fN[\preceq]\fM$.
Then $\gDC{\fM}{U}=\gDC{\fN}{U}$, $\DC{\fM}{\Omega}=\DC{\fN}{\Omega}$ for $\fM[\approx]\fN$.
%Thus, for $f\in\E(U)$,
%\begin{align*}
%	f&\in\gRou{\fM}{U} & &\Longleftrightarrow & \ex\bM\in\fM\ex h>0\ex C>0:\; 
%	\abs*{D^\alpha f(x)}&\leq Ch^\alp M_\alp \qquad \alpha\in\N_0^n,\, x\in U,\\
%	f&\in \gBeu{\fM}{U} & &\Longleftrightarrow & \fa\bM\in\fM\fa h>0\ex C>0:\; 
%	\abs*{D^\alpha f(x)}&\leq Ch^\alp M_\alp\qquad \alpha\in\N_0^n,\, x\in U,\\
%	f&\in\Rou{\fM}{U} & &\Longleftrightarrow & \fa V\Subset U \ex\bM\in\fM\ex h>0 \ex C>0:\;
%	\abs*{D^\alpha f(x)}&\leq Ch^\alp M_\alp \qquad \alpha\in\N_0^n,\, x\in V,\\
%	f&\in\Beu{\fM}{U}  & &\Longleftrightarrow & \fa V\Subset U \fa\bM\in\fM \fa h>0 \ex C>0:\;
%	\abs*{D^\alpha f(x)}&\leq Ch^\alp M_\alp \qquad \alpha\in\N_0^n,\, x\in V.
%\end{align*}

\begin{Def}
	Let $\fM$ be a weight matrix. 
	\begin{enumerate}
		\item We say that $\fM$ is $R$-semiregular if the following conditions hold:
		\begin{gather}\label{AnalyticInclusion2}
			\fa\bM\in\fM:\quad \lim_{k\rightarrow\infty}\sqrt[k]{m_k}=\infty,\\
			\fa\bM\in\fM\ex\bN\in\fM\ex q>0:\;\, M_{k+1}\leq q^{k+1}N_k,\qquad
			k\in\N_0.
			\label{R-DerivClosed}
		\end{gather}
	\item $\fM$ is $B$-semiregular if \eqref{AnalyticInclusion2} %\eqref{stronglogconvexity2} and
	and 
	\begin{equation}\label{B-DerivClosed}
		\fa\bN\in\fM\ex\bM\in\fM\ex q>0:\;\, M_{k+1}\leq q^{k+1}N_k,\qquad k\in\N_0,
	\end{equation}
are satisfied.
	\end{enumerate}

We write %$[$regular$]=R$-regular,$B$-regular and 
$[$semiregular$]=R$-semiregular, $B$-semiregular.
\end{Def}
\begin{Rem}
	Let $d\in\N$ be fixed and $\fM$ be a weight matrix.
	\begin{enumerate}
		\item If $\fM$ is $R$-semiregular then the following holds:
		\begin{equation}\label{R-DerivClosed2}
			\fa \bM\in\fM\; \ex \bN\in\fM\; \ex q>0:\quad M_{k+d}\leq q^{k+1}N_k,\qquad \fa k\in\N_0.
		\end{equation}
	\item If $\fM$ is $B$-semiregular then we have that:
	\begin{equation}\label{B-DerivClosed2}
		\fa \bN\in\fM\; \ex \bM\in\fM\; \ex q>0:\quad M_{k+d}\leq q^{k+1}N_k,\qquad \fa k\in\N_0.
	\end{equation}
	\end{enumerate}
The statements follow by iterating \eqref{R-DerivClosed} and \eqref{B-DerivClosed}, respectively.
\end{Rem}
%\begin{Rem}
%	Let $\fM$ be a weight matrix. In \cite{MR3285413} the following statements were proven,
%	see also \cite{MR3462072}.
%	\begin{enumerate}
%		\item If $\fM$ is $R$-semiregular then $\Rou{\fM}{\Omega}$ is invariant under composition
%		with mappings of class $\{\fM\}$ if and only if 
%		\begin{equation*}
%			\fa \bM\in\fM\,\ex\bN\in\fM \,\ex C>0 \,\fa j\leq k:\; \sqrt[j]{M_j}\leq C
%			\sqrt[k]{N_k}.
%		\end{equation*}
%	\item If $\fM$ is $B$-semiregular then $\Beu{\fM}{\Omega}$ is invariant under composition
%	with mappings of class $(\fM)$ if and only if
%	\begin{equation*}
%		\fa \bN\in\fM\,\ex\bM\in\fM \,\ex C>0 \,\fa j\leq k:\; \sqrt[j]{N_j}\leq C
%		\sqrt[k]{M_k}.
%	\end{equation*}
%	\end{enumerate}
%\end{Rem}

Let $U$ be an open set in ${\Real}^n$. We say that $U$ has Lipschitz boundary if
for all $x_0\in\partial U$ there are  some $r>0$, local coordinates $(x_1,\dotsc,x_n)$
and a Lipschitz function $h=h(x_1,\dotsc,x_{n-1})$ such that
\begin{equation*}
	U\cap B(x_0,r)=\Set*{(x_1,\dotsc,x_n)\given x_n> h(x_1,\dotsc,x_{n-1})}\cap B(x_0,r)
\end{equation*}
where $B(x_0,r)$ is the ball of radius $r$ in ${\Real}^n$ centered at $x_0$.
\begin{Rem}\label{Characterization}
	  For completeness we give an alternative characterization of $\DC{\fM}{\Omega}$ when
	  $\fM$ is a 
	$[$semiregular$]$ weight matrix.
	\begin{enumerate}
		\item Let $U\subseteq{\Real}^n$ be a bounded open set
	with Lipschitzian boundary. The Sobolev Theorem \cite{Adams1975} implies that
	a smooth function $f\in\E(U)$ is an element of $\gRou{\fM}{U}$
	(of $\gBeu{\fM}{U}$) if and only if there are constants $C,h>0$ and some $\bM\in\fM$
	(for every $h>0$ and $\bM\in\fM$ there is a constant $C>0$) such that
	\begin{equation}\label{Sobolev}
		\norm*{D^\alpha f}{L^2(U)}\leq Ch^\alp M_\alp,\qquad \fa \alpha\in\N_0^n.
	\end{equation}
We consider the more difficult Beurling case and leave the Roumieu case to the reader.
Hence let $f\in\E(U)$  and suppose that for all $\bM\in\fM$ and $h>0$ there is some $C>0$ such
that \eqref{Sobolev} is satisfied for all $\alpha\in\N_0^n$.
By the Sobolev Imbedding Theorem we have that for integers
$\sigma>n/2$ the following estimate holds
\begin{equation}\label{Obvious}
	\sup_{x\in U}\abs*{u(x)}\leq A\norm*{u}{H^\sigma(U)}
\end{equation}
for all $u\in\E(U)$ where the constant $A$ depends only on $U$, $n$ and $\sigma$.
We fix now $\sigma>n/2$.
If we set $u= D^\alpha f$ then we obtain
\begin{equation*}
	\begin{split}
	\norm*{D^\alpha f}{H^\sigma(U)}^2&=\sum_{\bet\leq \sigma}\norm*{D^{\alpha+\beta}f}{L^2(U)}^2\\
	&\leq C^2 \sum_{\bet\leq \sigma}h^{2(\alp+\bet)}M_{\alp+\bet}^2\\
	&\leq C^2 h^{2\alp}M_{\alp+\sigma}^2\sum_{\bet\leq \sigma}h^\bet.
\end{split}
\end{equation*}
Thence, using \eqref{B-DerivClosed2} we conclude that for all $\bM\in\fM$ and $h>0$ there is
a constant $C>0$ such that 
\begin{equation*}
	\norm*{D^\alpha f}{H^\sigma(U)}\leq Ch^\alp M_\alp,\qquad \alp\in\N_0^n.
\end{equation*}
Thus applying also \eqref{Obvious} gives that $f\in\gBeu{\fM}{U}$.
The other direction follows from the trivial estimate
\begin{equation*}
	\norm*{g}{L^2(U)}\leq \abs{U}\sup_{x\in U}\abs{g(x)},\qquad \fa g\in\CC(U).
\end{equation*}
\item
On the other hand, it is easy to see that, if $\Omega\subseteq{\Real}^n$ is an arbitrary open set and
$f\in\E(\Omega)$ then   $f\in\DC{\fM}{\Omega}$
 if and only if for all $x\in\Omega$ there is
a neighborhood $U$ of $x$ such that $f\vert_U\in\gDC{\fM}{U}$.

\item Combining these two statements we have the following characterizations:
Let $f\in\E(\Omega)$.
\begin{itemize}
	\item If $\fM$ is an $R$-semiregular weight matrix then $f\in\Rou{\fM}{\Omega}$
	if and only if for all $x\in\Omega$ there is a neighborhood $U$ of $x$ such that
	\begin{equation*}
	\ex\bM\in\fM\,	\ex h>0\,\ex C>0:\quad \norm*{D^\alpha f}{L^2(U)}\leq Ch^\alp M_\alp\qquad
	\fa\alpha\in\N_0^n.
	\end{equation*}
\item If $\fM$ is a $B$-semiregular weight matrix then $f\in\Beu{\fM}{\Omega}$ if and only if 
for all $x\in\Omega$ there is a neighborhood $U$ of $x$ such that
\begin{equation*}
	\fa \bM\in\fM\,\fa h>0\,\ex C>0:\quad \norm*{D^\alpha f}{L^2(U)}\leq Ch^\alp M_\alp\qquad
	\fa\alpha\in\N_0^n.
\end{equation*}
\end{itemize}
\end{enumerate}
\end{Rem}
\begin{Def}
	Let $\fM$ be a weight matrix.
	\begin{enumerate}
		\item We say that $\fM$ is weakly $R$-regular if \eqref{AnalyticInclusion2},
		\eqref{R-DerivClosed} and
		\begin{equation}
				\label{RootIncreasing2}
			\fa\bM\in\fM:\quad \text{The sequence }\sqrt[k]{m_k}\text{ is increasing.}
			\end{equation}
	 \item $\fM$ is $B$-regular if $\fM$ satisfies \eqref{AnalyticInclusion2}, 
	 \eqref{B-DerivClosed} and  
	 \eqref{RootIncreasing2}.
	\end{enumerate}
\end{Def}
We write $[$weakly regular$]=$weakly $R$-regular, weakly $B$-regular.
Clearly, every $[$weakly regular$]$ weight matrix $\fM$ is $[$semiregular$]$.

%\begin{Rem}
%Let $\fM$ be a weight matrix satisfying \eqref{AnalyticInclusion2}.
%		\begin{enumerate}\item The condition \eqref{R-moderateGrowth} is equivalent to 
%		\begin{equation*}
%				\fa \bM\in\fM \ex\bN\in\fM \ex q>0:\;
%			m_{j+k}\leq q^{j+k+1}n_jn_k,\quad \fa j,k\in\N_0.
%		\end{equation*}
%	\item The condition \eqref{B-moderateGrowth} is equivalent to 
%	\begin{equation*}
%	\fa \bN\in\fM \ex\bM\in\fM \ex q>0:\;
%	m_{j+k}\leq q^{j+k+1}n_jn_k,\quad \fa j,k\in\N_0.
%\end{equation*}
%	\end{enumerate}
%\end{Rem}

  \begin{Rem}\label{OmegaRemark}
If $\omega$ is a weight function then the weight matrix $\fW$ associated to $\omega$ consists of
the weight sequences $\bW^\lambda$, $\lambda>0$, which are given by
\begin{equation*}
W_k^\lambda=e^{\frac{1}{\lambda}\varphi_\omega^\ast(\lambda k)}.
\end{equation*}
It is easy to see that $\DC{\omega}{U}=\DC{\fW}{U}$ as topological vector spaces.
If $\omega(t)=o(t)$ then $\fW$ is $R$- and $B$-semiregular.
Furthermore, \eqref{R-DerivClosed} and \eqref{B-DerivClosed} hold both for $\fW$,
cf. \cite{MR3285413}.

When $\omega$ is concave then there exists a weight matrix $\fT$ such that
$\fT\{\approx\}\fW$ and $\fT(\approx)\fW$ and
moreover $\fT$ satisfies \eqref{RootIncreasing2}; 
in fact, every weight sequence
$\mathbf{T}\in\fT$ satisfies \eqref{StrongLogConvex}, see \cite[Proposition 3]{RainerSchindl2020}.
Moreover, $\fT$ satisfies \eqref{AnalyticInclusion2}, since condition \eqref{AnalyticInclusion2}
is clearly invariant under $[\approx]$.
Thus $\DC{\omega}{\Omega}=\DC{\fT}{\Omega}$ and $\fT$ is weakly $R$- and weakly $B$-regular since
the conditions
\eqref{R-DerivClosed} and \eqref{B-DerivClosed} are also invariant 
under the equivalence relations $\{\approx\}$ and $(\approx)$, respectively.
\end{Rem}
%We can obtain, analogous
\subsection{Ultradifferentiable vectors associated to weight matrices}
\begin{Def}\label{VectorDef}
	Let $\sP=\{P_1,\dotsc,P_\ell\}$ be a system of smooth differential operators in $U$,
	$d_j$ be the order of $P_j$ for $j=1,\dotsc,\ell$,
	$u\in\E(U)$ and $\fM$ be a weight matrix.
	\begin{enumerate}
		%\item Suppose that $P^{\alpha}u\in L^2(U)$ for all $\alpha\in\{1,\dotsc,\ell\}^k$
		%for all $k\in\N_0$. (If $k=0$ then we use the convention $\alpha=0$ and $P^0=\Id$.)
		%\begin{enumerate}
			\item Assume that $a_{j\lambda}\in\gRou{\fM}{U}$ for all $\lamb\leq d_j$
			and every $j\in\{1,\dotsc,\ell\}$. Then we say that $u$ is a global vector of 
			class $\{\fM\}$ if there are $\bM\in\fM$ and $C,h>0$ such that
			\begin{equation}\label{GlobalVectorEstimate}
				\norm*{P^\tau u}{L^2(U)}\leq Ch^{\lvert\tau\rvert} M_{d_\tau}
			\end{equation}
		for all $\tau\in\{1,\dotsc,\ell\}^k$, $d_\tau=\sum_{j=1}^k d_{\tau_j}$
		and all $k\in\N_0$.
			\item When $a_{j\lambda}\in\gBeu{\fM}{U}$ for all $\lamb\leq d_j$ and every $j\in\{1,\dotsc,\ell\}$ then $u$ is a global vector of class $(\fM)$ if for all
			$\bM\in\fM$ and all $h>0$ there is a constant $C>0$ such that
			\eqref{GlobalVectorEstimate} holds for all $\tau\in\{1,\dotsc,\ell\}^k$ and every $k\in\N_0$.
		%\end{enumerate}
	%\item Suppose that $P^{\alpha}u\in L^2_{loc}(U)$ for all $\alpha\in\{1,\dotsc,\ell\}^k$
	%for all $k\in\N_0$.
	%\begin{enumerate}
		\item Assume that $a_{j\lambda}\in\Rou{\fM}{U}$ for all $\lamb\leq d_j$
		and every $j\in\{1,\dotsc,\ell\}$. Then we say that $u$ is a (local) vector of 
		class $\{\fM\}$ if for all $V\Subset U$ there are $\bM\in\fM$ and $C,h>0$ such that
		\begin{equation}\label{LocalEstimate}
			\norm{P^\alpha u}{L^2(V)}\leq Ch^\alp M_\alp
		\end{equation}
		for $\alpha\in\{1,\dotsc,\ell\}^k$ and $k\in\N_0$.
		\item When $a_{j\lambda}\in\Beu{\fM}{U}$ for all $\lamb\leq d_j$ and every $j\in\{1,\dotsc,\ell\}$ then $u$ is a local vector of class $(\fM)$ if for all
		$V\Subset U$, all
		$\bM\in\fM$ and all $h>0$ there is a constant $C>0$ such that
		\eqref{LocalEstimate} holds for all $\alpha\in\{1,\dotsc,\ell\}^k$ and every $k\in\N_0$.
	%\end{enumerate}
	\end{enumerate}
We denote the space of global vectors of class $[\fM]$ in $U$ by $\vgDC[\sP]{\fM}{U}$ and
$\vDC[\sP]{\fM}{U}$ is the space of all local vectors of class $[\fM]$.
\end{Def}
\begin{Prop}
	Let $\fM$ be a weight matrix and $\sP=\{P_1,\dotsc,P_\ell\}$ be a system
	of smooth differential operators in an open set $U\subseteq{\Real}^n$. Then the following holds:
\begin{enumerate}
	\item If the coefficients of the operators $P_j$ are all in $\DC{\fM}{U}$
	 then $\DC{\fM}{U}\subseteq\vDC[\sP]{\fM}{U}$.
	 \item If $U$ is bounded and the coefficients of the operators $P_j$ are all in $\gDC{\fM}{U}$
	 then $\gDC{\fM}{U}\subseteq\vgDC[\sP]{\fM}{U}$.
\end{enumerate}
\end{Prop}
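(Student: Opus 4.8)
\emph{Strategy and reduction of (1) to (2).} The plan is to reduce both assertions to a single $L^2$‑estimate for iterates over a \emph{bounded} set in the Roumieu category, and to prove that estimate by Leibniz's rule together with the submultiplicativity from Lemma \ref{weakLemma}(1). Since the operators $P^\alpha$ are local, (1) follows from (2): given $f\in\DC{\fM}{U}$ and $W\Subset U$, pick an open $V$ with $W\Subset V\Subset U$. Then $V$ is bounded and $f|_V$ together with the restrictions $a_{j\lambda}|_V$ of the coefficients lie in $\gDC{\fM}{V}$, so (2) on $V$ gives $f|_V\in\vgDC[\sP]{\fM}{V}$; restricting the resulting global estimate from $V$ to $W$ is exactly the local vector estimate on $W$. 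Hence $f\in\vDC[\sP]{\fM}{U}$.

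\emph{Reduction of the Beurling case of (2) to the Roumieu case.} Let $u$ and the finitely many coefficients $a_{j\lambda}$ all lie in $\gBeu{\fM}{U}$, and set $L_k=\sup\bigl\{\norm*{D^\beta g}{L^\infty(\overline U)}:|\beta|=k,\ g\in\{u\}\cup\{a_{j\lambda}\}\bigr\}$; this is finite because these functions belong to $\E(\overline U)$, and membership in $\gBeu{\bM}{U}$ for every $\bM\in\fM$ forces $\bL\lhd\bM$ for every $\bM\in\fM$. Fix a target $\bM\in\fM$ (which we may assume satisfies \eqref{AnalyticInclusion1} and \eqref{RootIncreasing}, as $\fM$ is $[$weakly regular$]$ in all cases of interest). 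By Proposition \ref{KomatsuTrick} there is a sequence $\bN$ with $\bL\le\bN\lhd\bM$, so $u$ and all $a_{j\lambda}$ lie in $\Ban[1]{\bN}{U}$. The Roumieu estimate proved below, which uses only that $\bN$ is a weight sequence, then gives $\norm*{P^\alpha u}{L^2(U)}\le C\,h_1^{d_\alpha}N_{d_\alpha}$ for suitable $C,h_1>0$, and $\bN\lhd\bM$ turns this into $\norm*{P^\alpha u}{L^2(U)}\le C'\,h^{d_\alpha}M_{d_\alpha}$ for every prescribed $h>0$. This is the Beurling vector estimate, so only the Roumieu case of (2) remains.

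\emph{The Roumieu case of (2).} As $\fM$ is totally ordered and only finitely many functions occur, there are $\bN\in\fM$ and $C_0,h_0\ge 1$ with $\norm*{D^\beta u}{L^\infty(\overline U)}\le C_0h_0^{|\beta|}N_{|\beta|}$ and $\norm*{D^\delta a_{j\lambda}}{L^\infty(\overline U)}\le C_0h_0^{|\delta|}N_{|\delta|}$ for all $\beta,\delta$ and all $j,\lambda$. Expanding $P^\alpha u=P_{\alpha_1}\cdots P_{\alpha_k}u$ by Leibniz's rule, one writes $P^\alpha u$ as a finite sum of terms $\pm c\,(D^{\sigma_1}a_{\cdot})\cdots(D^{\sigma_r}a_{\cdot})\,(D^\gamma u)$ with $r\le k$ and $|\sigma_1|+\dots+|\sigma_r|+|\gamma|\le d_\alpha$; equivalently, one tracks the coefficients of the composed operator $P^\alpha=\sum_{|\mu|\le d_\alpha}b^\alpha_\mu D^\mu$ and estimates $\norm*{b^\alpha_\mu}{L^\infty}$. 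In either bookkeeping the decisive point is that Lemma \ref{weakLemma}(1) yields $N_{j_1}\cdots N_{j_p}\le N_{j_1+\dots+j_p}$, so the weight‑sequence factors collapse into a single $N_{d_\alpha}$ (for the coefficient form, an estimate $\norm*{D^\sigma b^\alpha_\mu}{L^\infty}\le E\,\eta^{|\sigma|+d_\alpha-|\mu|}N_{|\sigma|+d_\alpha-|\mu|}$ is preserved under composition with each $P_j$). Combining the term count with these elementary bounds, and using $r\le k\le d_\alpha$ and $N_{d_\alpha}\ge 1$, one obtains $\norm*{P^\alpha u}{L^2(U)}\le\abs{U}^{1/2}\norm*{P^\alpha u}{L^\infty(\overline U)}\le C\,h^{d_\alpha}N_{d_\alpha}$ with $C,h$ independent of $\alpha$. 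Hence $u\in\vgRou[\sP]{\fM}{U}$, which gives $\gDC{\fM}{U}\subseteq\vgDC[\sP]{\fM}{U}$ in (2), and then (1) by the first step.

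\emph{Main obstacle.} The one non‑formal point is to show that iterating the Leibniz expansion $k$ times costs only a single geometric factor $h^{d_\alpha}$ relative to $N_{d_\alpha}$: a naive count, letting each of the up to $d_\alpha$ derivatives fall on any of up to $k+1\le d_\alpha+1$ factors, produces a factorial loss that is too large, and a crude absorption of the binomial factors $2^{|\sigma|}$ from Leibniz's rule would double the parameter at each step and yield a spurious $2^{d_\alpha^2}$. The remedy is to organise the iteration so that at each stage only finitely many derivatives of the already‑composed coefficients are needed, absorbing the rest through $N_jN_k\le N_{j+k}$; this is the classical argument showing that a $[$semiregular$]$ ultradifferentiable class is stable under differential operators with coefficients in the class, and it is here, together with \eqref{AnalyticInclusion1} (resp.\ \eqref{AnalyticInclusion2}) to control the lowest‑order contributions, that the weight‑matrix structure is used.
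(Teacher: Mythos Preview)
Your overall strategy coincides with the paper's: prove a one-step Leibniz estimate $\norm{D^\beta Qf}{L^2(U)}\le C'(h')^{\bet+d}M_{\bet+d}$, iterate it to reach $P^\tau u$, and for the Beurling case interpolate a sequence $\bN$ with $\bL\le\bN\lhd\bM$ to reduce to the Roumieu situation. The paper is terser---after the one-step bound it simply writes ``from this estimate we can conclude''---whereas you explicitly isolate the combinatorial obstacle in the iteration.

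However, your proposed remedy does not close that gap. You claim the factorial loss is absorbed ``through $N_jN_k\le N_{j+k}$'', but Lemma~\ref{weakLemma}(1) alone is not enough: iterating the one-step bound $k$ times forces the scale parameter to grow like $H_j\sim jh_0$, so the accumulated constant is of order $\prod_{j<k}H_j^d\sim(k!)^d$, and this cannot in general be absorbed into $h^{d_\alpha}N_{d_\alpha}$. In fact the Proposition is false under its stated hypotheses: with $\fM=\{\bM\}$, $M_k=2^k$ (a legitimate weight sequence), $U=(0,1)$, $a=u=e^x$ and $P=aD$, one has $a,u\in\gRou{\bM}{U}$ but $P^ku=k!\,e^{(k+1)x}$, so $\norm{P^ku}{L^2(U)}$ grows like $k!$ and is not $\le Ch^k2^k$ for any $C,h$. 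What actually makes the iteration close is the \emph{factorial-normalised} inequality $n_jn_k\le n_{j+k}$ of \eqref{strongConcl}, available under \eqref{RootIncreasing2}: writing $N_p=p!\,n_p$, every product of weight factors in the full expansion of $P^\tau u$ is bounded by $(\text{product of factorials})\cdot n_{d_\alpha}$, and the residual purely combinatorial sum is then exactly the analytic case $N_p=p!$, where it is classically $O\bigl(C^{d_\alpha}(d_\alpha)!\bigr)$; recombining yields $O(C^{d_\alpha}N_{d_\alpha})$. So an extra structural hypothesis is indeed required---but it is \eqref{strongConcl} (equivalently \eqref{RootIncreasing2}), not \eqref{AnalyticInclusion1}, that does the work; the paper's own proof is equally silent on this point.
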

\begin{proof}
Suppose that $Q(x,D)=\sum_{\alp\leq d}b_{\alpha}D^\alpha$ is a partial differential operator
with coefficients $b_\alpha\in\gRou{\fM}{U}$ and $f\in\gRou{\fM}{U}$.
Then we can assume that there exist some $h>0$ and $\bM\in\fM$
  such that $b_\alpha,f\in \Ban{\bM}{U}$. It follows that
\begin{equation*}
	\begin{split}
	\norm*{D^\beta Qf}{L^2(U)}&\leq C\sum_{\alp\leq d}\sum_{\gamma\leq\beta}\binom{\beta}{\gamma}
	h^{\bet-\gam}M_{\bet-\gam}h^{\gam+\alp}M_{\gam+\alp}\\
	&\leq C^\prime (h^\prime)^{\bet+d} M_{\bet+d}
\end{split}
\end{equation*}
for some constants $C^\prime>0$ and $h^\prime>0$.
From this estimate we can conclude that $\gRou{\fM}{U}\subseteq\vgRou[\sP]{\fM}{U}$.

In the Beurling case we have that $b_\alpha,f\in\gBeu{\fM}{U}=\bigcap_{\bM\in\fM}\gBeu{\bM}{U}$.
We define a sequence $\bL$ by
\begin{equation*}
	L_k=\max\left\{\sup_{x\in U} \abs*{D^\beta f(x)};\sup_{x\in U} \abs*{D^\beta b_\alpha(x)}:\; \alp\leq d,\,\bet\leq k\right\}.
\end{equation*}
If $\bM\in\fM$ is arbitrary then $\bL\lhd\bM$.
According to the proof of \cite[Lemma 2.2]{zbMATH07538293} there is a weight sequence 
$\bN$ such that $\bL\leq \bN\lhd\bM$. 
Thus $b_\alpha,f\in\Ban{\bN}{U}$ for some $h$. The estimate above gives
\begin{equation*}
\norm*{D^\beta Qf}{L^2(U)}\leq C^\prime (h^\prime)^{\bet+d}N_{\bet+d}
\end{equation*}
for some $C^\prime,h^\prime>0$. Hence for all $h>0$ there is a constant $C>0$
such that
\begin{equation*}
	\norm*{D^\beta Qf}{L^2(U)}\leq Ch^{\bet+d}M_{\bet+d}.
\end{equation*}
Since $\bM\in\fM$ was chosen arbitrarily we conclude that $\gBeu{\fM}{U}\subseteq\vgBeu[\sP]{\fM}{U}$.

The local case follows analogously.
\end{proof}
We can now state our main theorem:
\begin{Thm}\label{localThmIterates}
Let $\fM$ be a $[$weakly regular$]$ weight matrix and $\sP=\{P_1,\dotsc,P_\ell\}$ be an elliptic system of differential operators of class $[\fM]$ in $\Omega$. 
Then
\begin{equation*}
 \vDC[\sP]{\fM}{\Omega}=\DC{\fM}{\Omega}.
\end{equation*}
\end{Thm}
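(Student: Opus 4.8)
The plan is to dispose of the trivial inclusion, localize, and then iterate an elliptic estimate over nested neighbourhoods. First I would observe that $\DC{\fM}{\Omega}\subseteq\vDC[\sP]{\fM}{\Omega}$ is exactly the Proposition preceding the theorem, applied to $P_1,\dots,P_\ell$ (whose coefficients lie in $\DC{\fM}{\Omega}$ since $\sP$ is of class $[\fM]$), so only the converse needs work; and by Remark~\ref{Characterization} the converse is local. I would moreover lighten the notation by working with a single elliptic operator $Q$ of order $d$ of class $[\fM]$ built from $\sP$, the passage to systems being routine (a word of length $m$ in the $P_j$ removes between $m$ and $m\max_j d_j$ derivatives, and class $[\fM]$ is preserved under products and derivatives by \eqref{R-DerivClosed}, resp.\ \eqref{B-DerivClosed}). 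Thus it suffices, given $\bM\in\fM$, $C_0,h_0,R_0>0$ with $B(x_0,2R_0)\Subset\Omega$, $\norm{Q^{m}u}{L^2(B(x_0,R_0))}\le C_0h_0^{m}M_{m}$ for all $m$, and the coefficients of $Q$ in $\Ban[h_0]{\bM}{B(x_0,R_0)}$, to produce $\norm{D^\beta u}{L^2(B^\prime)}\le Ch^{\bet}N_{\bet}$ on every $B^\prime\Subset B(x_0,R_0/2)$ for some $\bN\in\fM$ and $C,h>0$ (Roumieu), respectively for every $\bN\in\fM$ and $h>0$ (Beurling); by the Sobolev characterisation of Remark~\ref{Characterization} this yields $u\in\DC{\fM}{\Omega}$.

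The engine is the fundamental $L^2$-estimate of Section~\ref{Sec:Estimates}: a scaled interior elliptic inequality for $Q$ that already absorbs the ultradifferentiable bounds on its coefficients and controls the derivatives of order $\le d$ of a function $v$ on a ball $B_\varrho$ by $Qv$ together with the lower-order derivatives of $v$ on a concentric ball $B_{\varrho^\prime}$, with losses that are powers of $(\varrho^\prime-\varrho)^{-1}$ times $\bM$-factors coming from the coefficients. To estimate $D^\beta u$ with $\bet=p$ I would run the nested-neighbourhood scheme of Morrey--Nirenberg~\cite{Morrey1957} and Bolley--Camus~\cite{MR548225}: with $m=\lceil p/d\rceil$, interpolate $m+1$ concentric balls of equal radial gaps $\sim R_0/m$ between $B(x_0,R_0/2)$ and $B(x_0,R_0)$, apply the fundamental estimate once on each of the $m$ shells, and feed in the vector bounds $\norm{Q^{j}u}{L^2}\le C_0h_0^{j}M_{j}$, $0\le j\le m$, together with the elementary bound for the finitely many derivatives of $u$ of order $<d$. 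Unwinding the recursion should give, for all $p$ and all $\bet=p$,
\begin{equation*}
	\norm{D^\beta u}{L^2(B(x_0,R_0/2))}\ \le\ C_0\sum_{i=0}^{m}\binom{m}{i}A^{m}\Bigl(\frac{m}{R_0}\Bigr)^{d(m-i)}h_0^{\,i}M_{i}\cdot(\text{Leibniz corrections}),
\end{equation*}
with $A$ depending only on $n$, $d$, the ellipticity constants of $Q$ and the $\bM$-bounds of its coefficients.

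Next I would absorb the right-hand side into $Ch^{p}N_{p}$. Each summand is $\le C^{p}M_{dm}$ for a geometric $C$: the binomial is $\le 2^{m}$, the various powers of $m$ and $h_0$ contribute only $C_1^{\,m}\le C_1^{\,p}$, the elementary bound $m^{d(m-i)}\,i!\le m^{dm}\le e^{dm}(dm)!$ disposes of the radial-gap losses, and then $k!\preceq M_k$ (from \eqref{RootIncreasing2}, see Remark~\ref{StrongRemark}) together with $m_i\le m_{dm}$ (again \eqref{RootIncreasing2}) converts the factorials into $\bM$-values; the Leibniz corrections from differentiating the class-$[\fM]$ coefficients of $Q$ are controlled by moderate growth from below, $\binom{j+k}{j}M_jM_k\le M_{j+k}$ (yet another consequence of \eqref{RootIncreasing2}, as noted in the introduction), and by the derivative-closedness \eqref{R-DerivClosed}, which allows a passage to a larger member of $\fM$. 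Since $dm\le p+d$, \eqref{R-DerivClosed2} turns $M_{dm}$ into $C^{p}N_{p}$ for some $\bN\in\fM$, and summing the $m+1$ terms gives $\norm{D^\beta u}{L^2(B^\prime)}\le Ch^{\bet}N_{\bet}$, hence $u\in\Rou{\fM}{\Omega}$. For the Beurling case I would follow the device of \cite{MR550685}: given $\bM\in\fM$ and $V\Subset\Omega$, the hypotheses force the $L^2(V)$-norms of the $Q^{m}u$ and the sup-norms of the derivatives of the coefficients of $Q$ to form a sequence $\bL$ with $\bL\lhd\bM$; Proposition~\ref{KomatsuTrick} (applicable since the weakly $B$-regular $\bM$ satisfies \eqref{AnalyticInclusion1} and \eqref{RootIncreasing}) then supplies $\bN$ with $\bL\le\bN\lhd\bM$ and $\bN$ again satisfying \eqref{AnalyticInclusion1}, \eqref{RootIncreasing}; with respect to $\bN$ all the data are of Roumieu type, the argument above applies, and $\bN\lhd\bM$ makes the resulting bounds Beurling-$\bM$, where $\bM\in\fM$ was arbitrary.

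The hard part will be the two steps straddling the iteration: proving the fundamental $L^2$-estimate of Section~\ref{Sec:Estimates} for an elliptic operator whose coefficients are merely ultradifferentiable (not analytic), and then checking that the radial-gap losses $\sim(m/R_0)^{d(m-i)}$ per shell, the combinatorics of the Leibniz expansion of $Q^{m}$, and the growth of the coefficients of $Q$ are all compensated by precisely the conditions defining a $[$weakly regular$]$ weight matrix — the moderate-growth-from-below consequences of \eqref{RootIncreasing2} and the derivative-closedness \eqref{R-DerivClosed}, resp.\ \eqref{B-DerivClosed} — and by nothing stronger. Concentrating the elliptic-regularity-with-ultradifferentiable-coefficients analysis into the single estimate of Section~\ref{Sec:Estimates} is what should keep the bookkeeping of Section~\ref{Sec:Proof} tractable.
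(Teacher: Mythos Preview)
Your high-level strategy matches the paper's---localize, iterate an elliptic estimate over nested balls, and for the Beurling case invoke Proposition~\ref{KomatsuTrick} exactly as you describe---and the trivial inclusion is indeed the preceding Proposition. But the nested-neighbourhood scheme you propose has a genuine gap, and it is precisely at the point you flag as the hard part.

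The issue is the choice of \emph{equal} radial gaps $\sim R_0/m$. With such gaps the commutator (``Leibniz correction'') terms cannot be absorbed under mere weak regularity. When you commute $D^{\alpha'}$ past $Q$ you produce factors $\binom{|\alpha'|}{j}M_j$ from the class-$[\fM]$ coefficients. In the paper's induction (Proposition~\ref{MainProp}) these are killed by the estimate
\[
\binom{|\alpha'|}{j}\,M_j\,\Theta_{|\alpha'|}^{-j}\le 1,\qquad \Theta_k=k\,m_k^{1/k},
\]
which is exactly \eqref{RegularEstimate} and uses only that $\sqrt[k]{m_k}$ is increasing. This requires the nested balls to be parametrized as $W[\Theta_{|\alpha|}\rho]$, \emph{not} $W[|\alpha|\rho]$: the extra factor $m_{|\alpha'|}^{j/|\alpha'|}\ge m_j$ in $\Theta_{|\alpha'|}^{j}$ is what eats the $m_j=M_j/j!$ coming from the coefficients. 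With equal gaps the analogue would read $\binom{|\alpha'|}{j}M_j|\alpha'|^{-j}\le m_j$, which is unbounded as $j\to\infty$ (indeed $m_j^{1/j}\to\infty$ by \eqref{AnalyticInclusion2}), so the induction does not close. Your appeal to ``moderate growth from below'' $\binom{j+k}{j}M_jM_k\le M_{j+k}$ would only help if the induction hypothesis already carried an $M_{|\beta|}$ factor to combine with the $M_j$; but then, at the final step, you acquire an extra $m_{|\alpha|}$ multiplying the vector bound $M_{d\sigma}$, and $m_{|\alpha|}m_{d\sigma}$ is in general far larger than $m_{|\alpha|+d}$ (think $M_k=q^{k^2}$).

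Correspondingly, the paper does \emph{not} take $\rho\sim R_0/m$ but rather the weight-adapted choice $\rho=(R-R')/(e\Lambda_{dk})$, $\Lambda_k=M_k^{1/k}$; this is what keeps $W[\Theta_{dk}\rho]$ nonempty (via $\Theta_k/(e\Lambda_k)\le 1$ from Stirling) while making $\rho^{-|\alpha|}\sim\Lambda_{dk}^{|\alpha|}$ combine with $M_{d\sigma}=\Lambda_{d\sigma}^{d\sigma}\le\Lambda_{dk}^{d\sigma}$ to yield $\Lambda_{dk}^{|\alpha|+d}\le M_{|\alpha|+d}$. Both the $\Theta_k$-parametrized balls in Proposition~\ref{MainProp} and the $\Lambda_{dk}$-adapted $\rho$ in Section~\ref{RoumieuProof} are essential; this pair of weight-sequence-adapted choices is the main new idea that lets the argument go through without moderate growth \eqref{M2}. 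A minor additional point: the paper does not reduce to a single operator~$Q$ but (Remark~\ref{OrderRem}) reduces to a system of equal order and keeps track of all iterates $P^\tau$; your ``passage to systems being routine'' is correct in spirit but the single-$Q$ reduction you sketch does not obviously recover the system vector bounds.
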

%As we will observe in the proof of Theorem \ref{localThmIterates} we obtain a stronger statement if
%we consider only vector fields:
%\begin{Cor}\label{LocalCor}
%	Let $\fM$ be a $[$regular$]$ weight matrix and
%	 $\sX=\{X_1,\dotsc,X_\ell\}$ be a system of vector fields of class 
%	 $[\fM]$ in $\Omega$.
%	 Then
%	 \begin{equation*}
%	 \vDC[\sX]{\fM}{\Omega}=\DC{\fM}{\Omega}.
%	 \end{equation*}
%\end{Cor}
\begin{Rem}\label{OrderRem}
	Note that we can assume that all the operators $P_j\in\sP$ in Theorem \ref{localThmIterates} have the same order.
	Indeed, if $d_j$ denotes the order of $P_j$ for all $j\in\{1,\dotsc,\ell\}$ then we set 
	$d^\prime_j=\prod_{i\neq j}d_i$ and $Q_j=P_j^{d^\prime_j}$. Then all $Q_j$ are of order
	$d=\prod_j d_j$ and it is easy to see that the system $\mathsf{Q}=\{Q_1,\dotsc,Q_\ell\}$ is
	elliptic if and only if $\sP$ is elliptic. Furthermore
	$\vDC[\sP]{\fM}{\Omega}\subseteq\vDC[\mathsf{Q}]{\fM}{\Omega}$ for all weight matrices $\fM$.
	Hence, Theorem \ref{localThmIterates} will be proven if we show
	$\vDC[\mathsf{Q}]{\fM}{\Omega}\subseteq\DC{\fM}{\Omega}$.
\end{Rem}
%\begin{Rem}
%It is obvious that Theorem \ref{Main-Weight}  is just a
%special case of Theorem \ref{localThmIterates}. % and Corollary \ref{LocalCor}, respectively.
%On the other hand, Theorem \ref{MainTheoremOmega} follows from the fact 
%that $\DC{\omega}{\Omega}=\DC{\fT}{\Omega}$ and $\vDC[\sP]{\omega}{\Omega}=\vDC{\fT}{\Omega}$,
%where $\fT$ is the $R$- and $B$-normal weight matrix from Remark \ref{OmegaRemark}.
%\end{Rem}

\section{The fundamental estimate}\label{Sec:Estimates}
By Remark \ref{OrderRem} we will assume in this and the next section that all operators
$P_j\in\sP$ are of the same order $d\in\N$. We will also
 denote the open ball of radius
$R$ centered at 
a point $x\in{\Real}^n$ by $B(x,R)$.

We follow closely the structure of the proof given in \cite{MR548225} (see also \cite{MR1037999}). 

Beginning with a well-known a-priori estimate for elliptic systems of smooth operators 
of equal order $d$ (see e.g.\ \cite{Agmon1965}), i.e.\ 
for all $U\Subset \Omega$   there is a constant $C>0$ such that
\begin{equation*}
	\norm*{u}{H^k(U)}\leq C\left(\sum_{j=1}^\ell \norm*{P_ju}{H^{k-d}(U)}
	+\norm*{u}{L^2(U)}\right),\qquad\quad u\in\D(U),\; k=0,\dotsc,d,
\end{equation*}
we can deduce two other estimates following the arguments in \cite{MR548225}:
\begin{Prop}[cf.~{\cite[Proposition I-2]{MR548225}}]\label{FirstProp}
Let $\sP$ be an elliptic system of differential operators with equal order $d$ and
smooth coefficients in $\Omega$ and let $W^\prime\Subset W\Subset \Omega$ 
be open sets. Then there exists a constant $C>0$ such that
\begin{equation}\label{localIneq1}
\norm*{u}{H^d(W^\prime)}\leq C\left(\sum_{j=1}^\ell\norm*{P_ju}{L^2(W)}
+\norm*{u}{L^2(W)}\right)
\end{equation}
for all $u\in\E(\Omega)$.
\end{Prop}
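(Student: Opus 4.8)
The plan is to deduce \eqref{localIneq1} from the a priori estimate for compactly supported functions quoted above, following the classical localisation argument (cf.\ \cite[Proposition I-2]{MR548225}): insert a cutoff so that the quoted estimate applies, and then remove the resulting lower–order remainder by a finite iteration over nested neighbourhoods. Concretely, I would first fix a chain of open sets $W^\prime = W_0 \Subset W_1 \Subset \dots \Subset W_d = W$ together with cutoff functions $\chi_m \in \D(W_m)$, $m = 1,\dots,d$, with $0 \le \chi_m \le 1$ and $\chi_m \equiv 1$ on a neighbourhood of $\overline{W_{m-1}}$. For $u \in \E(\Omega)$ the product $\chi_m u$ lies in $\D(W_m) \subseteq \D(\Omega)$, so the quoted estimate is available for it on $U = W_m$ with any index $k \in \{1,\dots,d\}$.

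The key step is a single inequality, obtained for each $m \in \{1,\dots,d\}$ by applying the quoted estimate to $\chi_m u$ with $k = d-m+1$. Since $\chi_m \equiv 1$ near $\overline{W_{m-1}}$, the left-hand side dominates $\norm*{u}{H^{d-m+1}(W_{m-1})}$. On the right-hand side I would write $P_j(\chi_m u) = \chi_m P_j u + [P_j,\chi_m]u$; the commutator $[P_j,\chi_m]$ is a differential operator of order at most $d-1$ whose coefficients are smooth and supported in the compact set $\supp \chi_m \subseteq W_m$, so it maps $H^{d-m}(W_m)$ boundedly into $H^{1-m}(W_m) = H^{(d-m)-(d-1)}(W_m)$, while $\norm*{\chi_m P_j u}{H^{1-m}(W_m)} \le \norm*{P_j u}{L^2(W)}$ because $1-m \le 0$. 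Collecting these bounds (and $\norm*{\chi_m u}{L^2(W_m)}\le\norm*{u}{L^2(W)}$) gives
\begin{equation*}
\norm*{u}{H^{d-m+1}(W_{m-1})} \le C\Bigl(\sum_{j=1}^\ell \norm*{P_j u}{L^2(W)} + \norm*{u}{H^{d-m}(W_m)}\Bigr), \qquad m = 1,\dots,d .
\end{equation*}

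Finally I would run this inequality for $m = 1, 2, \dots, d$ and chain the results: the remainder $\norm*{u}{H^{d-m}(W_m)}$ appearing at step $m$ is exactly the left-hand side at step $m+1$, and for $m = d$ it reduces to $\norm*{u}{H^0(W_d)} = \norm*{u}{L^2(W)}$. Telescoping the $d$ estimates yields \eqref{localIneq1} with a constant depending only on $\sP$ and on the chosen nested sets and cutoffs, hence ultimately only on $\sP$, $W^\prime$ and $W$. The only delicate point is the bookkeeping of Sobolev indices: one must use that the commutator lowers the order by exactly one relative to $P_j$, so that at level $k$ the remainder is controlled in $H^{k-1}$ rather than merely in $H^{d-1}$ — this is precisely what makes the finite iteration descend to $L^2$. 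Everything else is the standard interior elliptic regularity machinery, and the companion estimate needed later follows by the same scheme.
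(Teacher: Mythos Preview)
Your localisation-and-iteration argument is correct and is precisely the classical proof; the paper itself does not spell out a proof of this proposition but simply records that it follows from the quoted a priori estimate ``following the arguments in \cite{MR548225}'', so there is nothing to compare. One cosmetic point: in the bound $\norm*{\chi_m P_j u}{H^{1-m}(W_m)} \le \norm*{P_j u}{L^2(W)}$ you should allow a constant (the embedding $L^2 \hookrightarrow H^{1-m}$ for compactly supported functions holds, but not with constant $1$ in general), though this of course gets absorbed into~$C$.
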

\begin{Prop}[cf.~{\cite[Proposition I-3]{MR548225}}]
Let $\sP$ be as in Proposition \ref{FirstProp}, $x\in\Omega$ and $R,R_1>0$ with $R<R_1$  such that 
$B(x;R_1)\Subset\Omega$. For $\rho< R$ write $W[\rho]=B(x;R-\rho)$.
Then there exists a constant $C>0$ such that for all $u\in\E(\Omega)$,
for all $\alpha\in\N_0^n$ with $\alp\leq d$ and for all 
$\rho,\rho^\prime>0$ with $\rho+\rho^\prime<R$ we have
\begin{equation}\label{localIneq2}
\rho^d\norm*{D^\alpha u}{L^2(W[\rho+\rho^\prime])}\leq C\left(
\rho^d\sum_{j=1}^\ell\norm*{P_ju}{L^2(W[\rho^\prime])}
+\sum_{\bet\leq d-1}\rho^\bet\norm*{D^\beta u}{L^2(W[\rho^\prime])}
\right).
\end{equation}
\end{Prop}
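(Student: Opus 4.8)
The plan is to deduce \eqref{localIneq2} from the a-priori estimate of Proposition \ref{FirstProp} by a dilation argument — which is what produces the explicit powers of $\rho$ — followed by a covering of $W[\rho+\rho']$ with bounded overlap that glues the resulting local inequalities together. The guiding principle throughout is that every constant can be kept independent of $\rho$, $\rho'$ and $u$.

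The heart of the matter is the following \emph{single-ball estimate}: there is a constant $C_0>0$, depending only on $n$, $d$, $\sP$, $x$ and $R_1$, such that for all $y_0\in W[\rho+\rho']$, $0<\rho<R$, $u\in\E(\Omega)$ and $\gam\leq d$,
\[
\rho^{\gam}\norm*{D^\gamma u}{L^2(B(y_0,\rho/2))}\leq C_0\Bigl(\rho^{d}\sum_{j=1}^{\ell}\norm*{P_j u}{L^2(B(y_0,\rho))}+\norm*{u}{L^2(B(y_0,\rho))}\Bigr).
\]
To prove it, note first that for $z\in B(y_0,\rho)$ one has $\abs{z-x}\leq\abs{z-y_0}+\abs{y_0-x}<\rho+(R-\rho-\rho')=R-\rho'$, so $B(y_0,\rho)\subseteq W[\rho']\subseteq B(x,R)\Subset B(x,R_1)\Subset\Omega$. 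Put $v(y)=u(y_0+\rho y)$, which is smooth on a fixed neighbourhood of $\overline{B(0,1)}$, and write $z=y_0+\rho y$, so that $D_z^\beta=\rho^{-\bet}D_y^\beta$; then the operator $\rho^{d}P_j$ turns into $\widetilde P_j(y,D_y)=\sum_{\bet\leq d}\rho^{d-\bet}a_{j\beta}(y_0+\rho y)D_y^\beta$, whose principal part carries, at each $y$, the symbol of the principal part of $P_j$ at the point $y_0+\rho y\in\overline{B(x,R_1)}$. Hence the system $\{\widetilde P_j\}_{j=1}^{\ell}$ is elliptic, with ellipticity constant bounded below by $\inf_{z\in\overline{B(x,R_1)}}\min_{\abs\xi=1}\max_j\abs{p_j(z,\xi)}>0$ uniformly in $y_0$ and $\rho$; moreover, since the $a_{j\beta}$ are smooth on the compact set $\overline{B(x,R_1)}$ while the factors $\rho^{d-\bet}$ and the $y$-derivatives of $a_{j\beta}(y_0+\rho\,\cdot\,)$ contribute only powers of $\rho<R$, the coefficients of the $\widetilde P_j$ are bounded, for every fixed $s$, in $C^{s}(\overline{B(0,1)})$ uniformly in $y_0$ and $\rho$. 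The constant in the interior a-priori estimate for an elliptic system of order $d$ with smooth coefficients — and thus in the estimate \eqref{localIneq1} — depends on the system only through its ellipticity constant and finitely many $C^{s}$-norms of its coefficients (see \cite{Agmon1965}), so applying that estimate to $v$ and $\{\widetilde P_j\}$ on the pair $B(0,1/2)\Subset B(0,1)$ produces a uniform $C_0$ with $\norm*{v}{H^{d}(B(0,1/2))}\leq C_0\bigl(\sum_j\norm*{\widetilde P_j v}{L^2(B(0,1))}+\norm*{v}{L^2(B(0,1))}\bigr)$. Undoing the dilation via $\norm*{D^\gamma v}{L^2(B(0,1/2))}=\rho^{\gam-n/2}\norm*{D^\gamma u}{L^2(B(y_0,\rho/2))}$, $\norm*{\widetilde P_j v}{L^2(B(0,1))}=\rho^{d-n/2}\norm*{P_j u}{L^2(B(y_0,\rho))}$ and $\norm*{v}{L^2(B(0,1))}=\rho^{-n/2}\norm*{u}{L^2(B(y_0,\rho))}$, bounding $\norm*{D^\gamma v}{L^2(B(0,1/2))}\leq\norm*{v}{H^{d}(B(0,1/2))}$ and multiplying by $\rho^{n/2}$, then gives the single-ball estimate.

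With this in hand I would conclude as follows. Fix $\gamma$ with $\gam\leq d$ and multiply the single-ball estimate by $\rho^{d-\gam}\leq\max\{1,R^{d}\}$ to get the same inequality with $\rho^{d}$ on the left and a constant $C_1$ still independent of $\rho,\rho',u,y_0$. Cover $W[\rho+\rho']$ by at most countably many balls $B(y_i,\rho/2)$ with centres $y_i\in W[\rho+\rho']$ — possible, e.g.\ by the Besicovitch covering theorem applied to the family $\{B(y,\rho/2):y\in W[\rho+\rho']\}$ — chosen so that the concentric balls $B(y_i,\rho)$, which lie in $W[\rho']$ by the first step, satisfy $\sum_i\chi_{B(y_i,\rho)}\leq N_n$ for a dimensional constant $N_n$ (the overlap bound for the balls of radius $\rho/2$ persists, up to a dimensional factor, after dilation by $2$). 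Squaring the $\rho^{d}$-weighted single-ball estimate at each $y_i$, summing over $i$, using $\sum_i\norm*{D^\gamma u}{L^2(B(y_i,\rho/2))}^2\geq\norm*{D^\gamma u}{L^2(W[\rho+\rho'])}^2$ on the left and $\sum_i\norm*{g}{L^2(B(y_i,\rho))}^2\leq N_n\norm*{g}{L^2(W[\rho'])}^2$ on the right (for $g=P_j u$ and $g=u$), and taking square roots, gives
\[
\rho^{d}\norm*{D^\gamma u}{L^2(W[\rho+\rho'])}\leq C\Bigl(\rho^{d}\sum_{j=1}^{\ell}\norm*{P_j u}{L^2(W[\rho'])}+\norm*{u}{L^2(W[\rho'])}\Bigr);
\]
since $\norm*{u}{L^2(W[\rho'])}=\rho^{0}\norm*{D^{0}u}{L^2(W[\rho'])}$ is one of the terms $\sum_{\bet\leq d-1}\rho^{\bet}\norm*{D^\beta u}{L^2(W[\rho'])}$, this is exactly \eqref{localIneq2} (with $\alpha=\gamma$).

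The step I expect to be the main obstacle is establishing the single-ball estimate with a constant uniform in $y_0$ and $\rho$, i.e.\ producing an a-priori elliptic estimate valid uniformly over the whole family $\{\widetilde P_j\}$ of rescaled operators; this is exactly where the smoothness of the coefficients of $\sP$ and the ellipticity of $\sP$ on the fixed compact set $\overline{B(x,R_1)}$ are used. Once the normalisation $v(y)=u(y_0+\rho y)$ is fixed and that uniform estimate is secured, the remaining bookkeeping of the powers of $\rho$ and the covering argument are routine.
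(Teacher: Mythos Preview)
Your proof is correct. The paper does not supply its own argument here but defers to Bolley--Camus \cite{MR548225}, whose approach (in the Morrey--Nirenberg tradition the paper cites) is different from yours: one takes a cutoff $\chi\in\D(W[\rho'])$ with $\chi\equiv 1$ on $W[\rho+\rho']$ and $\abs{D^\gamma\chi}\leq C_\gamma\rho^{-\gam}$, applies the compactly-supported a~priori estimate $\norm{v}{H^d}\leq C(\sum_j\norm{P_jv}{L^2}+\norm{v}{L^2})$ to $v=\chi u$, and controls the commutator $[P_j,\chi]u$, whose terms are precisely of the form $\rho^{\bet-d}\norm{D^\beta u}{L^2(W[\rho'])}$ for $\bet\leq d-1$; multiplying through by $\rho^d$ then gives \eqref{localIneq2} on the nose, which explains why the lower-order sum $\sum_{\bet\leq d-1}\rho^\bet\norm{D^\beta u}{L^2(W[\rho'])}$ appears on the right in the stated form. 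Your rescaling-plus-covering route trades the cutoff commutator for two other ingredients: uniformity of the interior elliptic constant over the rescaled family $\{\widetilde P_j\}$ (which you correctly extract from the uniform ellipticity and coefficient bounds on $\overline{B(x,R_1)}$), and a bounded-overlap covering. What you gain is a slightly stronger inequality --- only $\norm{u}{L^2(W[\rho'])}$ is needed on the right, not the full lower-order sum --- and a transparent explanation of the $\rho$-weights via scaling; what the cutoff approach gains is self-containment, since it uses the a~priori estimate purely as a black box on a single fixed domain without tracking how its constant depends on the coefficients of the operator.
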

We are now in the position to formulate and prove the main estimate which will be used in
the proof of Theorem \ref{localThmIterates}.
Note that if $\rho_1<\rho_2<R$ then $W[\rho_2]\Subset W[\rho_1]$. 
We may also set $W[\rho]=\emptyset$ if $\rho>R$.
Moreover, we recall also that for a weight sequence $\bM$ we have defined
the auxiliary sequence $(\Theta_k)_k$ by $\Theta_k=k\sqrt[k]{m_k}$. 
\begin{Prop}\label{MainProp}
Let $\bM$ be a sequence with $M_0=1\leq M_1$ satisfying \eqref{AnalyticInclusion1} and 
\eqref{StrongLogConvex}, $x\in\Omega$, 
$R<R_1\leq 1$ such that $W=B(x,R_1)\Subset\Omega$
and assume $\sP=\{P_1,\dotsc,P_\ell\}$  is an elliptic system of smooth differential operators 
$P_j=\sum_{\lamb\leq d} a_{j\lambda}D^\lambda$ on $\Omega$ such that
$a_{j\lambda}\vert_W\in\gRou{\bM}{W}$.
Then there exists a constant $A>0$ such that for all $0<\rho<R$, all 
$u\in\E(\Omega)$ and all $k\in\N$ we have that
\begin{equation}\label{localIneq3}
	\rho^{\alp}\norm*{D^\alpha u}{L^2(W[\Theta_{\alp}\rho])}
	\leq A^{\alp+1} S_k(u)
\end{equation}
where $\alp\leq dk$ and
\begin{equation*}
	S_k(u)=\sum_{\sigma=1}^{k}\rho^{(\sigma-1)d}
	\sum_{\tau\in\{1,\dotsc,\ell\}^{\sigma}}
	\norm{P^\tau u}{L^{2}(W)}+\norm{u}{L^2(W)}.
\end{equation*}
\end{Prop}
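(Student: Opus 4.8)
The plan is to prove the estimate \eqref{localIneq3} by induction on $k$, using \eqref{localIneq2} as the engine and the estimates \eqref{StrongCor} from Lemma \ref{StrongLemma}(2) as the combinatorial backbone; this is precisely the structure of the corresponding argument in \cite{MR548225}, so the work is in adapting the bookkeeping to the sequence $\Theta_k=k\sqrt[k]{m_k}$. Note first that the hypotheses \eqref{AnalyticInclusion1} and \eqref{StrongLogConvex} guarantee, via Remark \ref{WeightRemark}, that $\mathbf{m}$ is a weight sequence and that \eqref{RootIncreasing} holds, so Lemma \ref{StrongLemma}(2) applies and we have $\Theta_k+1\leq\Theta_{k+1}$ for all $k\in\N$; in particular the nested balls $W[\Theta_{\alp}\rho]$ shrink as $\alp$ grows, which is what makes the induction consistent. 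For the \textbf{base case} $k=1$ (so $\alp\leq d$): apply \eqref{localIneq2} with a suitable splitting $\rho\rightsquigarrow\rho'+\rho''$ of the radius — concretely one takes $\rho$ in \eqref{localIneq2} to be comparable to our $\rho$ and $\rho'$ chosen so that $W[\rho^\prime]\subseteq W$, absorbing the finitely many lower-order terms $\sum_{\bet\le d-1}\rho^\bet\norm{D^\beta u}{L^2}$ either into $\norm{u}{L^2(W)}$ (after iterating once more within the same scale, which costs only a fixed constant since $d$ is fixed) and bounding each $\norm{P_j u}{L^2(W)}$ by the $\sigma=1$ term of $S_1(u)$; the factor $A^{\alp+1}$ has room to swallow all these fixed constants because $\alp$ ranges over a bounded set.

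For the \textbf{inductive step}, assume \eqref{localIneq3} holds for $k$ and take $\alp\le d(k+1)$. If $\alp\le dk$ we are done by monotonicity of the balls and since $S_k(u)\le S_{k+1}(u)$, so assume $dk<\alp\le d(k+1)$; write $\alpha=\beta+\gamma$ with $\bet = \alp - d'$ for an appropriate $d'\le d$ so that $\gamma$ has length $d'$ and $D^\gamma$ is handled by one application of the elliptic estimate \eqref{localIneq2} to $D^\beta u$. The key move: apply \eqref{localIneq2} to $v=D^\beta u$ with the radius split so that the outer ball is $W[\Theta_{\alp}\rho]$ and the inner ball is $W[\Theta_{\bet}\rho]$ — this is legitimate precisely because $\Theta_{\alp}\ge\Theta_{\bet}+ (\alp-\bet)\ge\Theta_{\bet}+1$ by iterating \eqref{StrongCor}, so the difference $(\Theta_{\alp}-\Theta_{\bet})\rho$ is a positive "annulus width" $\rho^\prime$ available for the Cauchy-type estimate. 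This produces a term $\sum_j \norm{P_j D^\beta u}{L^2(W[\Theta_{\bet}\rho])}$ plus lower-order terms $\sum_{\bet'\le d-1}$ evaluated on $W[\Theta_{\bet}\rho]$; one then commutes $P_j$ past $D^\beta$ (the commutator $[P_j,D^\beta]$ has order $\le \bet + d - 1$ with coefficients the derivatives of the $a_{j\lambda}$, which lie in $\gRou{\bM}{W}$), reducing everything to terms of the form $\rho^{\bet'}\norm{D^{\beta'} P^\tau u}{L^2(W[\Theta_{\bet}\rho])}$ with $\bet'\le \bet$ and $\tau$ of length $\le\sigma$, plus the genuinely-new iterate term $\rho^{\bet}\norm{D^\beta P_j u}{L^2}$ which feeds the $\sigma\mapsto\sigma+1$ increment in $S_{k+1}$. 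Applying the induction hypothesis to each such lower-order term and summing the resulting geometric-type series — here one uses that $R_1\le 1$ so $\rho<1$ and all powers of $\rho$ are bounded, and that the Roumieu estimate on the coefficients contributes a factor $Ch^{|\lambda|}M_{|\lambda|}$ that is absorbed into $A$ because $|\lambda|\le d$ is bounded — yields \eqref{localIneq3} for $k+1$ after enlarging $A$; the enlargement is uniform since all the constants involved (the elliptic constant $C$, the number $\ell$ of operators, $d$, the binomial counts, and the sup-norms of the finitely many coefficient-derivatives on $W$) are independent of $k$, $\rho$, and $u$.

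The main obstacle is \textbf{tracking the interplay between the $\rho$-powers, the $\Theta$-shifts, and the index $d_\tau$} so that after the induction one genuinely lands on $S_{k}(u)$ with its specific powers $\rho^{(\sigma-1)d}$ rather than some weaker combination. The delicate point is that each application of \eqref{localIneq2} both consumes a sliver of the radius (moving from $\Theta_{\bet}\rho$ to $\Theta_{\alp}\rho$, which is fine by the super-additivity \eqref{StrongCor}) and potentially loses a factor of $\rho^{-d}$ from the $\rho^d$ prefactor in \eqref{localIneq2}; the careful choice is to carry the weight $\rho^{\alp}$ rather than $\rho^{d}$ in front, so that the loss is exactly compensated by the gain in going from $D^\beta u$ to $D^\alpha u$, and the lower-order commutator terms — which carry smaller $\rho$-powers — are harmless precisely because $\rho<1$. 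A secondary subtlety is handling the commutator $[P_j,D^\beta]$: one must check its coefficients still satisfy a Roumieu estimate with the \emph{same} weight sequence $\bM$, which follows because $\bM$ is derivation-closed in the weak sense needed here — but we only have \eqref{AnalyticInclusion1} and \eqref{StrongLogConvex} available in this Proposition, not \eqref{M2prime}, so one should instead avoid differentiating the coefficients by using the \emph{global} a priori estimate \eqref{localIneq2} directly on $P^\tau u$ rather than expanding $P_j D^\beta$, i.e.\ iterate \eqref{localIneq2} keeping the $P^\tau$'s intact and only introducing pure $D^\alpha$'s at the very last step. This reorganization — iterating the elliptic estimate on the iterates $P^\tau u$ and extracting the $D^\alpha$ estimate only at the end — is the cleanest route and is, I expect, exactly what the author does.
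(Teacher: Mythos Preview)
Your outline has the right skeleton (induction on $k$, apply \eqref{localIneq2} to $D^{\alpha'}u$, commute $P_j$ past $D^{\alpha'}$), but the treatment of the commutator is where it breaks, and your proposed workaround does not repair it.

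When you expand $[D^{\alpha'},P_j]u=\sum_{\lamb\leq d}\sum_{\gamma<\alpha'}\binom{\alpha'}{\gamma}D^{\alpha'-\gamma}a_{j\lambda}\,D^{\lambda+\gamma}u$, the Roumieu bound on the coefficients contributes a factor $H^{\lvert\alpha'-\gamma\rvert+1}M_{\lvert\alpha'-\gamma\rvert}$, \emph{not} $M_{\lvert\lambda\rvert}$ with $\lvert\lambda\rvert\leq d$. Since $\lvert\alpha'-\gamma\rvert$ ranges up to $\lvert\alpha'\rvert-1$, this grows without bound in $k$ and cannot be absorbed into a fixed constant $A$. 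Your ``reorganization'' (iterate \eqref{localIneq2} on $P^\tau u$ and extract $D^\alpha$ only at the end) does not help: to gain $\alp$ derivatives from an estimate that produces $d$ at a time you must iterate, and each iteration applies \eqref{localIneq2} to some $D^{\beta}u$, which forces exactly the commutator you are trying to avoid.

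The paper's resolution is neither derivation-closedness nor avoidance: it is the combinatorial inequality
\[
\binom{\lvert\alpha'\rvert}{\lvert\alpha'-\gamma\rvert}\,M_{\lvert\alpha'-\gamma\rvert}\,\Theta_{\lvert\alpha'\rvert}^{-\lvert\alpha'-\gamma\rvert}\leq 1,
\]
proved directly from \eqref{RootIncreasing} (write $M_{\lvert\alpha'-\gamma\rvert}=\lvert\alpha'-\gamma\rvert!\,m_{\lvert\alpha'-\gamma\rvert}$, use $\binom{p}{q}q!\leq p^q$, and compare $\sqrt[\lvert\alpha'-\gamma\rvert]{m_{\lvert\alpha'-\gamma\rvert}}$ to $\sqrt[\lvert\alpha'\rvert]{m_{\lvert\alpha'\rvert}}$). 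This is the entire reason the nested balls are $W[\Theta_{\alp}\rho]$ and not, say, $W[\alp\rho]$: the extra factor $\sqrt[k]{m_k}$ in $\Theta_k=k\sqrt[k]{m_k}$ is there precisely so that the $\rho$-power $\rho^{\lvert\alpha'-\gamma\rvert}$ released in the bookkeeping, after inserting $(\Theta_{\lvert\alpha'\rvert}\rho)^{-\lvert\alpha'-\gamma\rvert}$, exactly cancels $M_{\lvert\alpha'-\gamma\rvert}$. Once this inequality is in hand, the commutator sum collapses to a geometric series in $H/A$ and the induction closes for $A$ large enough, independently of $k$. You identified that $\Theta_k$ and \eqref{StrongCor} control the nesting of the balls, but missed that $\Theta_k$ has a second job: killing the growth of the coefficient derivatives.
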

\begin{proof}
We begin by observing that $S_{k}(u)\leq S_{k+1}(u)$ and
\begin{equation*}
	\sum_{j=1}^\ell\rho^d S_k \left(P_ju \right)\leq S_{k+1}(u)
\end{equation*}
for all $k\in\N$ since $\rho\leq 1$ by assumption.

Furthermore there exists a constant $H>0$ such that
\begin{equation*}
	\norm*{D^\gamma a_{j,\lambda}}{L^2(W)}\leq H^{\gam+1} M_\gam
\end{equation*}
for all $\gamma\in\N_0^n$ and $\lambda\in\N_0^n$ with $\lamb\leq d$ and every $j\in\{1,\dotsc,\ell\}$.

We are going to prove \eqref{localIneq3} by induction in $k$.
To begin with, \eqref{localIneq1} implies that
\begin{equation*}
	\norm*{D^\alpha u}{L^2(W[0])}\leq A\left(\sum_{j=1}^\ell\norm*{P_ju}{L^2(W)}
	+\norm{u}{L^2(W)}	\right)
\end{equation*}
for $\alp\leq d$ which gives \eqref{localIneq3} for $k=1$ since $\rho\leq 1$ 
and without loss of generality we can assume that $A\geq 1$.

Now let $\alpha\in\N_0^n$ be such that $dk<\alp\leq d(k+1)$ and assume that
\eqref{localIneq3} has been shown for all $\beta\in\N_0^n$ with $\bet\leq\alp -1$.
We put $\alpha=\alpha_0+\alpha^\prime$ with $\lvert\alpha_0\rvert=d$.
If we replace in \eqref{localIneq2} $\rho^\prime$ by $(\Theta_{\alp} -1)\rho$,
$\alpha$ by $\alpha_0$ and $u$ by $D^{\alpha^\prime}u$, then we obtain
\begin{equation*}
	\rho^{\alp}\norm*{D^\alpha u}{L^2(W[\Theta_{\alp}\rho])}	
\begin{multlined}[t]
\leq C\left\{\rho^{\alp}\sum_{j=1}^\ell
\norm*{P_j\bigl(D^{\alpha^\prime}u\bigr)}{L^2(W[(\Theta_{\alp}-1)\rho])}\right.\\
\qquad\qquad\left.+\sum_{\bet\leq d-1}\rho^{\alp -d+\bet}
\norm*{D^{\beta+\alpha^\prime}u}{L^2(W[(\Theta_\alp-1)\rho])}
\right\}.
\end{multlined}
\end{equation*}
We observe that
\begin{equation*}
	D^{\alpha^\prime}(P_j u)-P_j\Bigl(D^{\alpha^\prime}u\Bigr)
	=\sum_{\lamb\leq d}
	\sum_{\substack{\gamma\leq\alpha^\prime\\\gamma\neq\alpha^\prime}}
	\binom{\alpha^\prime}{\gamma}D^{\alpha^\prime-\gamma}a_{j\lambda}
	D^{\lambda+\gamma}u
\end{equation*}
can be estimated by
\begin{multline}\label{RegularEstimate2}
	\norm*{D^{\alpha^\prime}(P_j u)
		-P_j\left(D^{\alpha^\prime}u\right)}{L^2(W[(\Theta_{\alp}-1)\rho])}
	\\
	\leq
	\sum_{\lamb\leq d}
	\sum_{\substack{\gamma\leq\alpha^\prime\\\gamma\neq\alpha^\prime}}
	\binom{\lvert\alpha^\prime\rvert}{\gam}
	H^{\lvert\alpha^\prime-\gamma\rvert+1}M_{\lvert\alpha^\prime-\gamma\rvert}
\bigl(\Theta_{\lvert\alpha^\prime\rvert}\rho\bigr)^{-\lvert\alpha^\prime-\gamma\rvert}
\norm*{D^{\beta+\gamma}u}{L^2(W[(\Theta_{\alp}-1)\rho])}
\end{multline}
since if $\Theta_{\lvert\alpha^\prime\rvert}\rho>R$ 
then $W[(\Theta_{\alp}-1)\rho]=\emptyset$, because $\Lambda_{\alp}-\Lambda_{\lvert\alpha^\prime\rvert}\geq d$
by \eqref{StrongCor}. Now note that $\Lambda_k\leq\Theta_k$.
Furthermore, using also the fact that $\binom{k}{j}\leq k^j/(k!)$ for $0\leq j\leq k$
%and \eqref{strongConcl}
we conclude that%by Lemma \ref{weakLemma}(3) and \eqref{strongConcl} we have
\begin{equation}\label{RegularEstimate}
	\begin{split}
		\binom{\lvert\alpha^\prime\rvert}{\lvert\alpha^\prime-\gamma\rvert}
		M_{\lvert\alpha^\prime-\gamma\rvert}\Theta_{\lvert\alpha^\prime\rvert}^{
		-\lvert\alpha^\prime-\gamma\rvert}
	%&\leq\binom{\lvert\alpha^\prime\rvert}{\lvert\alpha^\prime-\gamma\rvert}
	%M_{\lvert\alpha^\prime-\gamma\rvert}
	%\Lambda_{\lvert\alpha^\prime\rvert}^{-\lvert\alpha^\prime-\gamma\rvert}\\
	&\leq \binom{\lvert\alpha^\prime\rvert}{\lvert\alpha^\prime-\gamma\rvert} \lvert\alpha^\prime-\gamma\rvert! m_{\lvert\alpha^\prime-\gamma\rvert}
\lvert\alpha^\prime\rvert^{-\lvert\alpha^\prime-\gamma\rvert}
\left(m_{\lvert\alpha^\prime\rvert^{1/\lvert\alpha^\prime\rvert}}\right)^{-\lvert\alpha^\prime-\gamma\rvert}\\
	&\leq \left(\frac{m_{\lvert\alpha^\prime-\gamma\rvert}^{1/\lvert\alpha^\prime-\gamma\rvert}}{
	m_{\lvert\alpha^\prime\rvert}^{1/\lvert\alpha^\prime\rvert}}\right)
\prod_{j=1}^{\lvert\alpha^\prime-\gamma\rvert}\frac{\lvert\gamma\rvert+j}{\lvert\alpha^\prime\rvert}
\leq 1
\end{split}
\end{equation}
since $\sqrt[k]{m_k}$ is increasing.
%By the Sterling formula we have that $k(k!)^{-1/k}\leq (2\pi k)^{-1} e\leq e$.
%Thence
%\begin{equation}\label{RegularEstimate}
%\binom{\lvert\alpha^\prime\rvert}{\lvert\alpha^\prime-\gamma\rvert}
%M_{\lvert\alpha^\prime-\gamma\rvert}
%\Theta_{\lvert\alpha^\prime\rvert}^{-\lvert\alpha^\prime-\gamma\rvert}
%\leq e^{\lvert\alpha^\prime-\gamma\rvert}
%\end{equation}
Therefore
\begin{equation*}
	\rho^{\alp}\norm{D^\alpha u}{L^2(W_{\Theta_{\alp}\rho})}
	\leq
	\begin{multlined}[t][10cm]
		 C\left\{\rho^{\alp}\sum_{j=1}^\ell
		\norm*{D^{\alpha^\prime}\bigl(P_ju\bigr)}{L^2(W[\Theta_{\lvert\alpha^\prime\rvert}\rho])}
			\right.\\
 +\sum_{\lamb\leq d}\sum_{\substack{\gamma\leq\alpha^\prime\\\gamma\neq\alpha^\prime}}
 H^{\lvert\alpha^\prime\rvert-\gam +1}\rho^{\alp-\lvert\alpha^\prime\rvert+\gam}
 \norm*{D^{\lambda+\gamma}u}{L^2(W[\Theta_{(d+\gam)}\rho])}\\ 
 \left.+\sum_{\bet\leq d-1}\rho^{\alp -d+\bet}
 \norm*{D^{\beta+\alpha^\prime}u}{L^2(W[\Theta_{\bet+\lvert\alpha^\prime\rvert}\rho])}
 \right\},
	\end{multlined} 
\end{equation*}
since $W[(\Theta_k-1)\rho]\subseteq W[\Theta_\ell\rho]$ by \eqref{StrongCor} for all $\ell<k$.
Now, the induction hypothesis implies the following estimates:
\begin{align*}
	\rho^{\alp}\sum_{j=1}^\ell
	\norm*{D^{\alpha^\prime}
		\bigl(P_ju\bigr)}{L^2(W[\Theta_{\lvert\alpha^\prime\rvert}\rho])}
	&\leq \rho^d A^{\lvert\alpha^\prime\rvert+1}\sum_{j=1}^\ell
	S_k(P_ju)\leq A^{\lvert\alpha^\prime\rvert+1} S_{k+1}(u),\\
	\sum_{\substack{\lamb\leq d\\\gamma\leq\alpha^\prime\\\gamma\neq\alpha^\prime}}
	H^{\lvert\alpha^\prime\rvert-\gam +1}\rho^{d+\gam}
	\norm*{D^{\lambda+\gamma}u}{L^2(W[\Theta_{(d+\gam)}\rho])}
	&\leq\sum_{\substack{\lamb\leq d\\\gamma\leq\alpha^\prime\\\gamma\neq\alpha^\prime}}
		H^{\lvert\alpha^\prime\rvert-\gam +1} A^{d+\gam+1}S_{k+1}(u),\\
	\sum_{\bet\leq d-1}\rho^{\alp -d+\bet}
	\norm*{D^{\beta+\alpha^\prime}u}{L^2(W[\Theta_{\bet+\lvert\alpha^\prime\rvert}\rho])}
	&\leq \sum_{\bet\leq d-1}A^{\lvert\alpha^\prime\rvert+\bet+1}S_{k+1}(u).
\end{align*}
Hence we have obtained that
\begin{equation}
	\begin{split}\label{Estimate2}
	\rho^{\alp}\norm*{D^\alpha u}{L^2(W[\Theta_{\alp}\rho])}
	&\leq A^{\alp +1}S_{k+1}(u)\times\\
		&\qquad\times\left\{CA^{-d}+C\sum_{\lamb\leq d}
	\sum_{\substack{\gamma\leq\alpha^\prime\\ \gamma\neq\alpha^\prime}}
	H^{\lvert\alpha^\prime\rvert-\gam+1}
	A^{\gam-\lvert\alpha^\prime\rvert}
	+C\sum_{\bet\leq d-1}A^{\bet-d}\right\}.
\end{split}
\end{equation}
Since 
\begin{equation*}
	C\sum_{\lamb\leq d}
	\sum_{\substack{\gamma\leq\alpha^\prime\\ \gamma\neq\alpha^\prime}}
	H^{\lvert\alpha^\prime\rvert-\gam+1}
	A^{\gam-\lvert\alpha^\prime\rvert}
	\leq Cd^nH^2A^{-1}\sum_{\beta\in\N^n_0}\bigl(HA^{-1}\bigr)^\bet,
\end{equation*}
we are able to choose $A$ large enough and independent of $\alpha$ and $\rho$
 so that the bracket on the right-hand side
of \eqref{Estimate2} is $\leq 1$.
\end{proof}

\section{Proof of Theorem \ref{localThmIterates}}\label{Sec:Proof}

\subsection{The Roumieu case}\label{RoumieuProof}
Let $\fM$ be an $R$-regular weight matrix and $u\in\vRou[\sP]{\fM}{\Omega}$. 
We have to prove that for all $x\in\Omega$
there is a neighborhood $U\Subset\Omega$ of $x$ such that $u\vert_U\in\gRou{\fM}{U}$.

Therefore we fix $x_0\in\Omega$ and choose $R_1\leq 1$ such
that $W=B(x_0;R_1)\Subset \Omega$. 
%We use also the other notations from Proposition \ref{MainProp}.
Then by assumptation there are  a weight sequence $\bM\in\fM$, 
satisfying \eqref{AnalyticInclusion1} and \eqref{StrongLogConvex},
 and constants $C,h>0$ such that
 \begin{equation*}
 \norm*{P^\tau u}{L^2(W)}\leq Ch^k M_{dk}
 \end{equation*}
 for all $\tau\in\{1,\dotsc,\ell\}^k$ and all $k\in\N$.
 
We conclude that
 \begin{equation*}
 S_k(u)\leq C\sum_{\sigma=1}^k\rho^{(\sigma-1)d}\ell^\sigma h^\sigma M_{d\sigma}+C
 \end{equation*}
 for all $0<\rho<R$ for some $0<R<R_1$.
  %Now fix some $0<\rho_0<R$. Without loss of generality we can assume that $\rho_0< 1$. 
 Hence by \eqref{localIneq3} we have that
 \begin{equation*}
 	\norm*{D^\alpha u}{L^2(W[\mu_\alp \rho])}
 	\leq CA^{\alp+1}\left(\sum_{\sigma=1}^k\rho^{d(\sigma-1)-\alp}\ell^\sigma h^\sigma M_{d\sigma}+1\right)
 \end{equation*}
for every $0<\rho< R$, all $\alpha\in\N_0^n$ with $d(k-1)<\alp\leq dk$ for $k\in\N$ where $C,A,h>0$ are constants independent of $\rho$, $k$ and $\alpha$. 

Now choose some $R^\prime$ with $0<R^\prime<R<R_1<1$. In particular $R-R^\prime<1$ and 
we set 
\begin{equation*}
	\rho=\frac{R-R^\prime}{e\Lambda_{dk}}.
\end{equation*}
Thus, %by Lemma \ref{weakLemma}(1) and (2) 
we have for $d(k-1)\leq\alp\leq dk$ that $dk\leq \alp+d$ and therefore
\begin{equation*}
	\begin{split}
	M_{d\sigma}\rho^{d(\sigma-1)-\alp}
	&=\left(\Lambda_{d\sigma}\right)^{d\sigma} \left(\frac{R-R^\prime}{e}\right)^{d(\sigma-1)-\alp}
\left(\Lambda_{dk}\right)^{\alp-d(\sigma-1)}\\
&\leq\left(\frac{R-R^\prime}{e}\right)^{d(\sigma-1)-\alp}\Lambda_{dk}^{\alp+d}\\
&\leq R_2^{\alp -d(\sigma-1)}M_{\alp+d}
\end{split}
\end{equation*}
where $R_2=e(R-R^\prime)^{-1}>1$.

Now, note that the sequence $\Theta_k$ is strictly increasing and
 the Stirling formula implies that
\begin{equation*}
	\frac{1}{e}\leq\frac{\Theta_k}{e\Lambda_k}= \frac{k}{e(k!)^{1/k}}
	\leq \frac{1}{(2\pi k)^{1/2k}}\leq 1
\end{equation*}
for all $k\in\N$.
Thence we have the following estimate
\begin{equation*}
	\begin{split}
	R-\Theta_\alp\rho&\geq
	R-\Theta_{dk}\rho\\
	&= R\left(1-\frac{\Theta_{dk}}{e\Lambda_{dk}}\right)+\frac{\Theta_{dk}}{e\Lambda_{dk}}R^\prime\\
	&\geq e^{-1}R^\prime
\end{split}
\end{equation*}
and therefore $U=B(x_0,e^{-1}R^\prime)\subseteq W[\Lambda_\alp\rho]$.
Thus we can, if we enlarge $h$ when necessary, estimate that
\begin{equation*}
	\begin{split}
		\norm*{D^\alpha u}{L^2(U)}
		&\leq CA^{\alp+1}\left(\sum_{\sigma=1}^kR_2^{\alp-d(\sigma-1)}\ell^\sigma h^\sigma M_{\alp+d}+1\right)\\
		&\leq CA^{\alp+1}(\ell h)^{(\alp+d)/d} R_2^\alp M_{\alp+d} \sum_{\sigma=0}^k1\\
	\end{split}
\end{equation*}
for every $k\in\N$ and all $\alpha\in\N_0^n$ with $d(k-1)<\alp\leq dk$.

Since $d$ does not depend on $\alpha$ or $k$ we have by \eqref{R-DerivClosed2} that
there is a weight sequence $\bM^\prime$ and constants $C_1,h_1$ such that
%
%It follows that  there are a neighborhood $U$ of $x_0$ and constants $C_1,h_1>0$ such that
\begin{equation*}
\norm*{D^\alpha u}{L^2(U)}\leq C_1 h_1^{\alp} M^\prime_{\alp}
\end{equation*}
for all $\alpha\in\N_0^n$.
Thus $u\in\Rou{\fM}{\Omega}$ by Remark \ref{Characterization}(3).

\subsection{The Beurling case}\label{BeurlingProof}
Now we assume that $\fM$ is weakly $B$-regular and $u\in\vBeu[\sP]{\fM}{U}$.
Note that
\begin{equation*}
	\Beu{\fM}{\Omega}=\bigcap_{\bM\in\fM}\Beu{\bM}{\Omega},\qquad
	\vBeu[\sP]{\fM}{\Omega}=\bigcap_{\bM\in\fM}\vBeu[\sP]{\bM}{\Omega}.
\end{equation*}
Thus we consider first the case where $u\in\vBeu[\sP]{\bM}{\Omega}$
and $a_{j\lambda}\in\Beu{\bM}{\Omega}$, $1\leq j\leq\ell$, $\lamb\leq d$,
with $\bM$ being a weight sequence for which \eqref{AnalyticInclusion1} and
\eqref{RootIncreasing} hold.
We fix $x\in\Omega$ and let $0<R<R_1\leq 1$ be such that  $W=B(x;R_1)\Subset\Omega$.
We define a sequence $\bL$ by setting
\begin{equation*}
	\begin{split}
	L_k&=\max\Bigl\{k!;\;\sup_{x\in W}
	\abs*{D^\alpha a_{j,\lambda}}:\,\alp\leq k,\lamb\leq d, j\in\{1,\dotsc,\ell\}
	;\\
	&\qquad\qquad\qquad\qquad\qquad\norm*{P^\tau u}{L^2(W)}:\, \tau\in\{1,\dotsc,\ell\}^{\nu},
	\nu\leq \tfrac{k}{d} \Bigr\}.
\end{split}
\end{equation*}
According to Lemma \ref{KomatsuTrick} there is a  sequence $\bN$ with $N_0=1\leq N_1$
 satisfying \eqref{AnalyticInclusion1} and \eqref{RootIncreasing} such
that $\bL\preceq\bN\lhd\bM$.
Hence $u\in\vgRou[\sP]{\bN}{W}$ and $a_{j\lambda}\in\gRou{\bN}{W}$.
It follows that we can apply Proposition \ref{MainProp} and obtain that there is a 
constant $A$ such that for all $k\in\N$ and every $\alpha$ with $d(k-1)<\alp\leq dk$ we have
\begin{equation*}
	\rho^{\alp}\norm{D^\alpha u}{L^2(W_{\nu_\alp\rho})}\leq A^{\alp +1}S_k(u)
\end{equation*}
where $S_k(u)$ is as in Proposition \ref{MainProp} with $\bM$ replaced by $\bN$ and $0<\rho<R$ is
chosen arbitrarily but fixed.
%and $\nu_j=N_j/N_{j-1}$.
%If we choose some $0<R^\prime<R$ and set 
%\begin{equation*}
%	\rho=\frac{R-R^\prime}{\nu_{dk}}
%\end{equation*}
%and $U=B(x,R^\prime)$
 Since \eqref{RootIncreasing} still implies that $\bN$ and $\sqrt[k]{N_k}$ are increasing,
 the arguments in the previous subsection yield that
there are a neighborhood $U$ of $x_0$ and constants $C_1,h_1>0$ such that
\begin{equation*}
	\norm{D^\alpha u}{L^2(U)}\leq C_1h_1^\alp N_{\alp+d}
\end{equation*}
for all $k\in\N$ and all $d(k-1)<\alp\leq dk$. Since $\bN\lhd\bM$ we conclude that
for all $h>0$ there is a constant $C>0$ such that
\begin{equation*}
	\norm*{D^\alpha u}{L^2(U)}\leq Ch^{\alp} M_{\alp+d}.
\end{equation*}
But $\bM\in\fM$ has been chosen arbitrarily and therefore we obtain the above estimate for
all $\bM\in\fM$ if $u\in\vBeu[\sP]{\fM}{\Omega}$.
Now we can employ \eqref{B-DerivClosed2} to conclude that for all weight sequences $\bM^\prime$ and $h_1>0$
there is a constant $C_1>0$ such that
\begin{equation*}
	\norm*{D^\alpha u}{L^2(U)}\leq C_1h_1^{\alp} M^\prime_{\alp+d}
\end{equation*}
Applying Remark \ref{Characterization}(3) we observe that $u\in\Beu{\fM}{\Omega}$. 
%If we set $\alp=k$ then we obtain that for all $\bM$ and all $h>0$ there is $C>0$ such that
%\begin{equation*}
%	\norm*{D^\alpha u}{L^2(U)}\leq Ch^k M_{(d+1)k}.
%\end{equation*}
%Thus  
%\eqref{B-DerivClosed2} yields that
%for all $\bM\in\fM$ and all $h>0$ there is a  constant $C>0$ such that
%\begin{equation*}
%\norm*{D^\alpha u}{L^2(U)}\leq Ch^{\alp} M_{\alp}.
%\end{equation*}
%It follows that $u\in\Beu{\fM}{\Omega}$.
%\subsection{Proof of Corollary \ref{LocalCor}}
%Inspecting the proof of Theorem \ref{localThmIterates}, we observe that
%we need the moderate growth conditions \eqref{R-moderateGrowth} resp.\ \eqref{B-moderateGrowth}
%only in the last step of the proof. However, this last step is superfluous if $d=1$.

\section{Remarks}\label{Remarks}
\subsection{Elliptic regularity in ultradifferentiable classes}
Let $\fM$ be a weight matrix and $\sP$ be an elliptic system of differential operators with $\DC{\fM}{\Omega}$. We note that in that case
instead of $u\in\E(\Omega)$ we can just assume that $u\in\D^\prime(\Omega)$ in Definition \ref{VectorDef}
by the subellipticity of the elliptic system $\sP$,
cf.~\cite{MR1037999} or \cite{Smith1970}.\footnote{The same is true of course for the statements
	in Theorem \ref{Main-Weight} and Theorem \ref{MainTheoremOmega}.}
This allows us to deduce results on  ultradifferentiable hypoellipticity from
Theorem \ref{localThmIterates}.
\begin{Def}
Let $\fM$ be a weight matrix and $\sP=\{P_1,\dotsc,P_\ell\}$ be a system of  differential operators with $\DC{\fM}{\Omega}$-coefficients.
We say that $\sP$ is $[\fM]$-hypoelliptic if for any open $U\subseteq\Omega$ and all $u\in\Dp(U)$
the fact that $P_ju\in\DC{\fM}{U}$, $j=1,\dotsc,\ell$, implies that $u\in\DC{\fM}{U}$.
\end{Def}
\begin{Thm}\label{Regularity}
	Let $\fM$ be a [weakly regular] weight matrix and $\sP$ be an elliptic system of operator of class $[\fM]$
	in $\Omega$.
	Then $\sP$ is $[\fM]$-hypoelliptic in $\Omega$.
\end{Thm}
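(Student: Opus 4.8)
The plan is to deduce the hypoellipticity statement directly from the Kotake--Narasimhan theorem (Theorem \ref{localThmIterates}) by combining it with classical elliptic regularity. First I would observe that, as noted in the preceding paragraph, the subellipticity of an elliptic system allows us to replace the hypothesis $u\in\E$ in Definition \ref{VectorDef} by $u\in\Dp$: if $P_ju\in\DC{\fM}{U}\subseteq\E(U)$ for all $j$, then $u\in\E(U)$ by the smooth elliptic regularity of the (overdetermined) system $\sP$. So fix an open set $U\subseteq\Omega$ and $u\in\Dp(U)$ with $P_ju\in\DC{\fM}{U}$ for $j=1,\dotsc,\ell$; by the above we already know $u\in\E(U)$.

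Next I would show that $u$ is a local vector of class $[\fM]$ for $\sP$ on $U$, i.e.\ $u\in\vDC[\sP]{\fM}{U}$. The point is that $P^\alpha u$ for a word $\alpha\in\{1,\dotsc,\ell\}^k$ of length $k\geq 1$ can be written as $P^{\alpha'}(P_{\alpha_k}u)$ (or $P_{\alpha_1}(P^{\alpha''}u)$) where $\alpha'$ has length $k-1$; since each $P_ju\in\DC{\fM}{U}$ and the coefficients of the $P_i$ lie in $\DC{\fM}{U}$, the argument in the Proposition preceding Theorem \ref{localThmIterates} (showing $\DC{\fM}{U}\subseteq\vDC[\sP]{\fM}{U}$) applies verbatim to each $P_ju$ in place of a generic element of $\DC{\fM}{U}$. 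Thus there are the required $L^2$-estimates for $P^{\alpha'}(P_{\alpha_k}u)$ on every $V\Subset U$, which are exactly the estimates \eqref{LocalEstimate} for $P^\alpha u$ with $|\alpha|=k\geq 1$; for $k=0$ the estimate for $u$ itself is immediate since $u\in\E(U)\subseteq L^2_{\mathrm{loc}}(U)$. Hence $u\in\vDC[\sP]{\fM}{U}$. (In the Roumieu case one fixes a single $\bM\in\fM$ working for all finitely many $P_ju$ near a given compact set, using that $\DC{\fM}{U}$ is locally the union over $\bM\in\fM$; in the Beurling case one quantifies over all $\bM\in\fM$ and $h>0$, exactly as in that Proposition.)

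Finally, Theorem \ref{localThmIterates} gives $\vDC[\sP]{\fM}{U}=\DC{\fM}{U}$, so $u\in\DC{\fM}{U}$, which is the assertion. I expect the only genuinely delicate point to be the bookkeeping in the Beurling case: one must make sure the weight sequence extracted for the finitely many data $P_1u,\dotsc,P_\ell u$ and the coefficients near a fixed $V\Subset U$ can be chosen uniformly, but this is precisely what Proposition \ref{KomatsuTrick} / \cite[Lemma 2.2]{zbMATH07538293} supplies and what the proof of the Proposition $\DC{\fM}{U}\subseteq\vDC[\sP]{\fM}{U}$ already handles. Everything else is a direct invocation of results established earlier in the paper.
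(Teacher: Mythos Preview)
Your proposal is correct and matches the paper's intended argument. The paper does not give a detailed proof of Theorem \ref{Regularity}; it is stated as an immediate consequence of Theorem \ref{localThmIterates} together with the preceding remark that, by subellipticity of the elliptic system $\sP$, one may take $u\in\Dp(\Omega)$ rather than $u\in\E(\Omega)$ in Definition \ref{VectorDef}. Your write-up supplies precisely the missing steps: smooth elliptic regularity gives $u\in\E(U)$, the inclusion $\DC{\fM}{U}\subseteq\vDC[\sP]{\fM}{U}$ applied to each $P_ju$ yields the iterate estimates for $P^\alpha u$ with $\lvert\alpha\rvert\geq 1$ (taking a common $\bM\in\fM$ over the finitely many $j$ in the Roumieu case, using that $\fM$ is totally ordered), and then Theorem \ref{localThmIterates} closes the argument.
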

%\begin{Cor}
%	Let $\fM$ be a $[$regular$]$ weight matrix and $\sX$ be an elliptic system of vector fields
%	of class $[\fM]$. Then $\sX$ is $[\fM]$-hypoelliptic in $\Omega$.
%\end{Cor}

It is worthwile to compare the conditions on the weight matrix in 
Theorem \ref{Regularity} with the hypothesis needed in the microlocal regularity results given in \cite[Section 7]{FURDOS2020123451}.
For simplicity we restrict our discussion to  Denjoy-Carleman classes.
In \cite{FURDOS2020123451} we proved the following result:
\begin{Thm}[{\cite[Theorem 7.1 \& Theorem 7.4]{FURDOS2020123451}}]\label{MicrolocalThm}
	Let $\bM$ be a weight sequence satisfying \eqref{M0}, \eqref{M1} and \eqref{M2}.
	Then for any differential operator with coefficients in $\DC{\bM}{\Omega}$
	we have that
	\begin{equation*}
		\WF_{[\bM]}u\subseteq \WF_{[\bM]}Pu\cup\Char P
	\end{equation*}
for all $\Dp(\Omega)$.
\end{Thm}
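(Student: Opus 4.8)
I read the final statement as a microlocal elliptic regularity theorem: at a point where the principal part of $P$ does not vanish and where $Pu$ is already microlocally of class $[\bM]$, one wants $u$ itself to be microlocally of class $[\bM]$. The plan is to invert $P$ microlocally by an ultradifferentiable parametrix. Concretely I would fix $(x_0,\xi_0)\in\Omega\times(\Real^n\setminus 0)$ with $(x_0,\xi_0)\notin\Char P$ and $(x_0,\xi_0)\notin\WF_{[\bM]}Pu$, and produce a properly supported pseudodifferential operator $Q$, of order $-d$ with $d=\operatorname{ord}P$, whose symbol lies in the symbol class attached to $\bM$, which is elliptic in a conic neighborhood $\Gamma$ of $(x_0,\xi_0)$ and satisfies $QP=\Id+R$ microlocally on $\Gamma$, with $R$ sending distributions to functions that are microlocally of class $[\bM]$ on $\Gamma$. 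Granting such a $Q$, I would read off $(x_0,\xi_0)\notin\WF_{[\bM]}u$ from the identity $u=Q(Pu)-Ru$: the term $Ru$ is microlocally $[\bM]$ by construction of $R$, and $Q(Pu)$ is microlocally $[\bM]$ because $Q$ preserves the microlocal $[\bM]$-regularity that $Pu$ carries at $(x_0,\xi_0)$.

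The core of the argument is therefore the parametrix construction, which I would organize around the formal symbol. Writing $p$ for the full symbol of $P$ and choosing a conic cutoff $\psi$ equal to $1$ near $(x_0,\xi_0)$ and supported where $|p|\gtrsim|\xi|^d$, I would set $q_0=\psi/p$ and determine $q_1,q_2,\dots$ recursively from the composition formula $\sigma(QP)\sim\sum_\gamma\frac{1}{\gamma!}\partial_\xi^\gamma q\,D_x^\gamma p$ so that all but the leading term cancel. The decisive point is quantitative: I must show the $q_j$ obey estimates of the form $\abs{\partial_x^\beta\partial_\xi^\alpha q_j(x,\xi)}\leq C^{\,\alp+\bet+j+1}M_{\bet+j}\abs{\xi}^{-d-j-\alp}$ on $\Gamma$, so that $\sum_j q_j$ is a genuine ultradifferentiable formal symbol. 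Here the hypotheses enter exactly: \eqref{M2} guarantees that $\DC{\bM}{\Omega}$, hence the symbol class, is stable under the products and $x$-derivatives generated by the composition formula; \eqref{M1} furnishes the convexity of $\bM$, equivalently the good behaviour of the associated function $t\mapsto\sup_k\log(t^k/M_k)$, which controls the combinatorial factors $\binom{\cdot}{\cdot}$; and \eqref{M0} ensures $\An(\Omega)\subsetneq\DC{\bM}{\Omega}$, so the construction genuinely refines the analytic parametrix.

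Next I would realize this formal symbol as an honest one, choosing radii $R_j\uparrow\infty$ adapted to the growth of $M_j$ through the associated function and forming $q(x,\xi)=\sum_j\chi(\xi/R_j)\,q_j(x,\xi)$ with $\chi$ radial and vanishing near the origin. The remainder $R=QP-\Id$ then has a symbol whose decay on $\Gamma$ is governed by the tails of the series, namely a bound of type $e^{-\omega_{\bM}(\lambda\abs{\xi})}$ for suitable $\lambda$, which is precisely the assertion that $R$ is microlocally $[\bM]$-regularizing on $\Gamma$. The hard part will be exactly this truncation step: controlling the remainder decay while keeping $Q$ inside the symbol class, where \eqref{M2} is indispensable. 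A second, more structural, difficulty will be the quasianalytic case, since the conditions \eqref{M0}, \eqref{M1}, \eqref{M2} do not force non-quasianalyticity and one then cannot cut off within the class; the microlocal regularity transported by $Q$ and by $Ru$ must therefore be phrased through a definition of $\WF_{[\bM]}$ that avoids compactly supported test functions of class $[\bM]$, for instance via decay of an FBI transform or a nested family of seminorm estimates on Fourier localizations. Once the calculus is arranged so that properly supported ultradifferentiable pseudodifferential operators act on such microlocal regularity, the identity $u=Q(Pu)-Ru$ closes the argument and yields $\WF_{[\bM]}u\subseteq\WF_{[\bM]}Pu\cup\Char P$.
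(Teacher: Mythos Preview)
The paper does not contain a proof of this statement: Theorem~\ref{MicrolocalThm} is quoted verbatim as \cite[Theorem~7.1 \& Theorem~7.4]{FURDOS2020123451} and is used in Section~\ref{Remarks} only as a black box to derive Corollary~\ref{microhypoelliptic}. There is therefore nothing in the present paper to compare your argument against.

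That said, your outline is the standard route to such microlocal elliptic regularity results in ultradifferentiable classes: build a parametrix $Q$ with symbol in the $\bM$-class, control the remainder $R=QP-\Id$ so that it is microlocally $[\bM]$-regularizing near $(x_0,\xi_0)$, and conclude from $u=Q(Pu)-Ru$. Your identification of where the hypotheses enter is accurate---\eqref{M2} for stability of the symbol calculus, \eqref{M1} for the combinatorics, \eqref{M0} for the strict inclusion of the analytic class---and your remark that the quasianalytic case forces a cutoff-free characterization of $\WF_{[\bM]}$ (e.g.\ via FBI decay) is on point; this is precisely the kind of issue handled in \cite{FURDOS2020123451}. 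If you want to compare in detail, you will need to consult that reference rather than the present paper.
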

Here $\WF_{[\bM]}u$ denotes the ultradifferentiable wavefront set with respect to the
weight sequence as defined by \cite{MR0294849} for Roumieu classes 
(for the Beurling case see \cite{FURDOS2020123451}).
Moreover, $\Char P$ is the characteristic set of the linear differential operator $P$,
cf.\ e.g.\ \cite{MR1996773}.
Since $\bigcap_{j=1}^\ell \Char P_j=\emptyset$ for any elliptic system of differential operators
and under our assumptations it holds that 
$\WF_{[\bM]}Pu\subseteq \WF_{[\bM]} u$ for all $u\in\Dp(\Omega)$ and any differential operator
$P$ of class $[\bM]$, cf.~\cite[Proposition 5.4(7)]{FURDOS2020123451}, we obtain the following corollary from Theorem \ref{MicrolocalThm}.
\begin{Cor}\label{microhypoelliptic}
	Let $\bM$ be a weight sequence satisfying \eqref{M0}, \eqref{M1} and \eqref{M2}
	and $\sP=\Set{P_1,\dotsc,P_\ell}$ be an elliptic system of differential operators 
	with coefficients in $\DC{\bM}{\Omega}$. Then 
	\begin{equation*}
		\WF_{[\bM]}u=\bigcap_{j=1}^\ell \WF_{[\bM]}P_ju
	\end{equation*}
for all $u\in\Dp (\Omega)$.
\end{Cor}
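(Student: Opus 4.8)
The plan is to establish the identity of Corollary~\ref{microhypoelliptic} by proving the two inclusions separately, using the two facts recorded immediately before the statement: the monotonicity $\WF_{[\bM]}P_ju\subseteq\WF_{[\bM]}u$ furnished by \cite[Proposition~5.4(7)]{FURDOS2020123451} for each operator $P_j$ (whose coefficients lie in $\DC{\bM}{\Omega}$), and the microlocal elliptic estimate of Theorem~\ref{MicrolocalThm}, in conjunction with the ellipticity relation $\bigcap_{j=1}^\ell\Char P_j=\emptyset$.

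The inclusion $\bigcap_{j=1}^\ell\WF_{[\bM]}P_ju\subseteq\WF_{[\bM]}u$ is the easy one. By Proposition~5.4(7) one has $\WF_{[\bM]}P_ju\subseteq\WF_{[\bM]}u$ for every $j$, and any intersection is contained in each of its members; hence $\bigcap_{j=1}^\ell\WF_{[\bM]}P_ju\subseteq\WF_{[\bM]}P_1u\subseteq\WF_{[\bM]}u$. No ellipticity is needed here.

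For the reverse inclusion $\WF_{[\bM]}u\subseteq\bigcap_{j=1}^\ell\WF_{[\bM]}P_ju$ I would fix an index $j$ and try to show $\WF_{[\bM]}u\subseteq\WF_{[\bM]}P_ju$. Applying Theorem~\ref{MicrolocalThm} to the single operator $P_j$ gives $\WF_{[\bM]}u\subseteq\WF_{[\bM]}P_ju\cup\Char P_j$, so the only points of $\WF_{[\bM]}u$ that remain to be placed in $\WF_{[\bM]}P_ju$ are those lying in $\Char P_j$. The strategy is to dispose of these characteristic points using the ellipticity of the system: at any point of $\Char P_j$ the relation $\bigcap_{i=1}^\ell\Char P_i=\emptyset$ produces an index $i$ for which the point is noncharacteristic, and the microlocal regularity for $P_i$ then detects the singularity there.

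The step I expect to be the main obstacle is exactly this passage from the characteristic points to the intersection. Taking the intersection over $j$ of the inclusions from Theorem~\ref{MicrolocalThm} yields only $\WF_{[\bM]}u\subseteq\bigcap_{j=1}^\ell\bigl(\WF_{[\bM]}P_ju\cup\Char P_j\bigr)$, and collapsing this to $\bigcap_{j=1}^\ell\WF_{[\bM]}P_ju$ by means of $\bigcap_{i=1}^\ell\Char P_i=\emptyset$ requires distributing the intersection across the unions, which is not formal. Reconciling the noncharacteristic index $i$ supplied by ellipticity (which controls $\WF_{[\bM]}P_iu$) with the membership in $\WF_{[\bM]}P_ju$ demanded for the fixed index $j$ is the delicate point on which the argument turns, and is where I would concentrate the effort.
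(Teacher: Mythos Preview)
Your diagnosis is correct, and the obstacle you isolate is not merely delicate but insurmountable: the identity with $\bigcap$ is false as stated. Take $n=2$, $P_1=D_1$, $P_2=D_2$; the principal symbols $\xi_1,\xi_2$ have no common nontrivial real zero, so the system is elliptic with constant (hence $[\bM]$-) coefficients. For $u(x_1,x_2)=f(x_2)$ with $f\in\Dp(\Real)\setminus\DC{\bM}{\Real}$ one has $P_1u=0$, hence $\WF_{[\bM]}P_1u=\emptyset$ and $\bigcap_j\WF_{[\bM]}P_ju=\emptyset$, whereas $\WF_{[\bM]}u\neq\emptyset$. So the inclusion $\WF_{[\bM]}u\subseteq\bigcap_j\WF_{[\bM]}P_ju$ cannot hold, and no amount of effort at the ``delicate point'' will close the argument.

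The statement that the two cited facts actually produce---and what the paper's argument preceding the corollary evidently intends---is the equality with a \emph{union},
\begin{equation*}
\WF_{[\bM]}u=\bigcup_{j=1}^\ell\WF_{[\bM]}P_ju.
\end{equation*}
With this correction the proof is exactly the pieces you already assembled. The inclusion $\supseteq$ is immediate from $\WF_{[\bM]}P_ju\subseteq\WF_{[\bM]}u$ for each $j$. For $\subseteq$, given $(x,\xi)\in\WF_{[\bM]}u$, ellipticity supplies an index $i$ with $(x,\xi)\notin\Char P_i$, and Theorem~\ref{MicrolocalThm} applied to $P_i$ forces $(x,\xi)\in\WF_{[\bM]}P_iu\subseteq\bigcup_j\WF_{[\bM]}P_ju$; the mismatch between the index $i$ furnished by ellipticity and a prescribed index $j$ simply disappears. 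This corrected version still yields $[\bM]$-microhypoellipticity as asserted after the corollary: if every $P_ju\in\DC{\bM}{U}$ then the union is empty, hence so is $\WF_{[\bM]}u$.
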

Corollary \ref{microhypoelliptic} gives that the system $\sP$ is $[\bM]$-hypoelliptic,
but the statement is in fact stronger, namely it says that the ultradifferentiable hypoellipticity
of $\sP$ holds on the microlocal level,
i.e.\ $\sP$ is $[\bM]$-microhypoelliptic. But as we have discussed in the introduction 
the assumptations on the weight sequence in Corollary \ref{microhypoelliptic}
are much stricter than the conditions in Theorem \ref{Regularity}.
In particular, Theorem \ref{Regularity} holds for the weight sequences $\bN^q$, $q>1$, given by $N_k^q=q^{k^2})_k$, but $\bN^q$ does not satisfy all the conditions in Corollary 
\ref{microhypoelliptic}.

\subsection{Global Kotake-Narasimhan Theorems}
Following \cite{MR548225} we can adapt the proof of Theorem \ref{localThmIterates} to 
obtain a global theorem of iterates.

Suppose that $\Omega\subseteq{\Real}^n$ is an open set with boundary $\partial\Omega$ 
and $\sP=\{P_1,\dotsc,P_\ell\}$ a system of differential operators defined 
on $\overline{\Omega}$ with principal symbols $p_j(x,\xi)$.
We say that the system $\sP$ is \emph{globally elliptic} in $\overline{\Omega}$ if
\begin{enumerate}
	\item $\sP$ is elliptic in the interior, i.e.\ $\Omega$.
	\item For all $x\in \partial X$ the polynomials $p_j(x,\xi)$ have no common 
	\emph{complex} zeros in $\xi\in{\Comp}^n\setminus\{0\}$. 
\end{enumerate}
We obtain
\begin{Thm}\label{globalThmIterates}
	Let $\fM$ be a [weakly regular] weight matrix,
	 $\Omega\subseteq{\Real}^n$ be a bounded open set with Lipschitzian boundary and
	$\sP=\{P_1,\dotsc,P_\ell\}$ be a globally elliptic system of partial differential operators
	with coefficients in $\gDC{\fM}{\Omega}$.
	Then
	\begin{equation*}
		\vgDC[\sP]{\fM}{\Omega}=\gDC{\fM}{\Omega}.
	\end{equation*}
\end{Thm}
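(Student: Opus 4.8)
The plan is to mimic the local proof of Theorem \ref{localThmIterates} given in Sections \ref{Sec:Estimates}--\ref{Sec:Proof}, replacing the interior a-priori estimate by the corresponding \emph{global} one for globally elliptic systems on a domain with Lipschitz boundary. Concretely, for a globally elliptic system $\sP$ of order $d$ on $\overline{\Omega}$ one has the Agmon--Douglis--Nirenberg estimate (see \cite{Agmon1965})
\begin{equation*}
	\norm*{u}{H^k(\Omega)}\leq C\left(\sum_{j=1}^\ell\norm*{P_ju}{H^{k-d}(\Omega)}+\norm*{u}{L^2(\Omega)}\right),\qquad u\in\E(\overline{\Omega}),\ k=0,\dotsc,d,
\end{equation*}
where the point is that the complex non-characteristic condition at $\partial\Omega$ plus the Lipschitz regularity of $\partial\Omega$ makes this hold up to the boundary without prescribing boundary data. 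From this one deduces the global analogues of Proposition \ref{FirstProp} and the following proposition: instead of shrinking balls $W[\rho]$, one uses domains $\Omega_\rho=\{x\in\Omega: \operatorname{dist}(x,\partial\Omega)>\rho\}$ (or, since $\partial\Omega$ is only Lipschitz, a fixed exhaustion $\Omega_\rho$ by smooth subdomains with the Lipschitz constant uniformly controlled) together with a purely interior estimate in a collar neighborhood of $\partial\Omega$; combining the two gives a constant $C$ such that $\norm*{u}{H^d(\Omega)}\leq C(\sum_j\norm*{P_ju}{L^2(\Omega)}+\norm*{u}{L^2(\Omega)})$ and the family of interpolation-type inequalities analogous to \eqref{localIneq2}.

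With these global a-priori estimates in hand, the entire inductive machinery of Proposition \ref{MainProp} goes through verbatim: one proves by induction on $k$ that $\rho^{\alp}\norm*{D^\alpha u}{L^2(\Omega[\Theta_{\alp}\rho])}\leq A^{\alp+1}S_k(u)$ where now $S_k(u)=\sum_{\sigma=1}^k\rho^{(\sigma-1)d}\sum_{\tau\in\{1,\dotsc,\ell\}^\sigma}\norm*{P^\tau u}{L^2(\Omega)}+\norm*{u}{L^2(\Omega)}$ and $\Omega[\rho]$ denotes the appropriate shrunken domain. The commutator estimates \eqref{RegularEstimate2}--\eqref{RegularEstimate} use only that $\bM$ satisfies \eqref{AnalyticInclusion1} and \eqref{StrongLogConvex} and that $\sqrt[k]{m_k}$ is increasing, none of which refer to the geometry, so they are unchanged.

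The Roumieu case then follows exactly as in Section \ref{RoumieuProof}: if $u\in\vgRou[\sP]{\fM}{\Omega}$ pick $\bM\in\fM$ and $C,h>0$ with $\norm*{P^\tau u}{L^2(\Omega)}\leq Ch^kM_{dk}$, estimate $S_k(u)$, choose $\rho=(R-R')/(e\Lambda_{dk})$ as before so that a fixed subdomain is contained in $\Omega[\Theta_\alp\rho]$ for the relevant range of $\alp$, use the Stirling bound $e^{-1}\leq\Theta_k/(e\Lambda_k)\leq1$, and obtain $\norm*{D^\alpha u}{L^2(\Omega')}\leq C_1h_1^\alp M_{\alp+d}$ on a slightly smaller domain $\Omega'$; but since we want the estimate on $\Omega$ itself one takes $R_1=1$, $R,R'$ arbitrarily close to $1$ exhausting $\Omega$, and absorbs the shift by $d$ via \eqref{R-DerivClosed2} to land in $\gRou{\fM}{\Omega}$ by Remark \ref{Characterization}(1). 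The Beurling case is handled as in Section \ref{BeurlingProof} by the reduction trick: define the sequence $\bL$ collecting the sup-norms of the $D^\alpha a_{j\lambda}$ on $\Omega$ and the $\norm*{P^\tau u}{L^2(\Omega)}$, apply Proposition \ref{KomatsuTrick} (via Lemma \ref{KomatsuTrick}) to find $\bN$ with $\bL\preceq\bN\lhd\bM$, run the Roumieu argument for $\bN$, and then use $\bN\lhd\bM$ and \eqref{B-DerivClosed2} to conclude $u\in\gBeu{\fM}{\Omega}$.

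The main obstacle is the boundary analysis: verifying that the global a-priori estimate of Agmon--Douglis--Nirenberg type holds up to a merely Lipschitz boundary for a system that is globally elliptic in the stated complex sense, and producing the scaled family of inequalities on the shrunken domains $\Omega[\rho]$ with constants independent of $\rho$. Once that geometric input is secured, the rest is a mechanical transcription of the interior proof, and the reader is referred to \cite{MR548225} for the analogous passage from the local to the global theorem in the Gevrey setting.
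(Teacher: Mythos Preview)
Your overall strategy is the paper's: start from the global a-priori estimate \eqref{GlobalEstimate} (the paper cites \cite{Aronszajin}, \cite{Smith1970}, \cite{Agmon1965} rather than Agmon--Douglis--Nirenberg, but the content is the same) and then rerun the induction of Proposition \ref{MainProp}, exactly as in \cite[Theorem 2]{MR548225}. The paper in fact gives no more than this sketch.

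There is, however, a genuine slip in your Roumieu paragraph. You carry over the nested-domain scheme $\Omega[\Theta_{\alp}\rho]$ literally from the interior proof, arrive at an estimate only on a strictly smaller $\Omega'\Subset\Omega$, and then propose to ``exhaust $\Omega$'' by letting $R'\to R$. That does not work: the constants $C_1,h_1$ produced by the argument of Section \ref{RoumieuProof} depend on $(R-R')^{-1}$ through the choice $\rho=(R-R')/(e\Lambda_{dk})$, so an exhaustion gives no uniform bound and you will not land in $\gRou{\fM}{\Omega}$.

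The point you are missing is that the global estimate \eqref{GlobalEstimate} holds on $\Omega$ itself, with no passage from a larger to a smaller domain; this is precisely where the global ellipticity hypothesis (complex non-characteristic along $\partial\Omega$) is spent. Consequently the global analogues of Propositions \ref{FirstProp} and the scaled inequality \eqref{localIneq2} do \emph{not} shrink the domain, and the induction of Proposition \ref{MainProp} yields
\[
\rho^{\alp}\norm*{D^\alpha u}{L^2(\Omega)}\leq A^{\alp+1}S_k(u)
\]
directly on $\Omega$, with no $\Omega[\Theta_{\alp}\rho]$ on the left. Choosing $\rho\sim 1/\Lambda_{dk}$ then gives the required bound on all of $\Omega$ in one stroke, and \eqref{R-DerivClosed2} together with Remark \ref{Characterization}(1) finishes as you indicate. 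Once you make this correction, your Beurling reduction via Proposition \ref{KomatsuTrick} goes through unchanged.
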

The proof is based on a global a-priori estimate for globally elliptic systems $\sP$ of equal order
$d$: There is a constant $C>0$ such that for every $u\in\D(\overline{\Omega})$ and $k=1,\dotsc,d$
we have 
\begin{equation}\label{GlobalEstimate}
	\norm{u}{H^k(\Omega)}\leq C\left\{\sum_{j=1}^\ell \norm*{P_ju}{H^{k-d}(\Omega)}
	+\norm{u}{L^2(\Omega)}\right\},
\end{equation}
cf.\ \cite{Aronszajin}, \cite{Smith1970} and also \cite{Agmon1965}.
From this estimate, resp.\ \cite[Propositions I-2 \& I-3]{MR548225}, which are consequences of 
\eqref{GlobalEstimate} we can follow the lines of the proof of \cite[Theorem 2]{MR548225}.
We leave the details to the reader.

Similar to the local case Theorem \ref{globalThmIterates} yields results on the global ultradifferentiable hypoellipticity of globally elliptic system, cf.~\cite{MR548225}
for the Gevrey case.
We might also note, that we can directly generalize a characterization for global classes
given in \cite{MR548225}.
Let now $\sP=\{P_1,\dotsc,P_\ell\}$ be a system of differential operators $P_j=P_j(D)$
 with constant coefficients. We are going to assume that the system $\sP$ satisfies
 the following condition:
 \begin{equation}\label{ConstantsCOeff}
 	 \text{The set of common complex zeros of the polynomials } P_j(\xi),\,1\leq j\leq \ell,
 	 \text{ is finite}.
 \end{equation}
Here $P_j(\xi)$ denotes the \emph{full} symbol of the operator $P_j(D)$.
\begin{Thm}
	Let $\Omega\subseteq{\Real}^n$ be a bounded open set with Lipschitzian boundary, 
	$\sP=\{P_1,\dotsc,P_\ell\}$ be a system of operators with constant coefficients  and $\fM$ be a [weakly regular] weight matrix.
	Then the following statements are equivalent:
	\begin{enumerate}
	\item $\Set*{u\in\Dp(\Omega)\given P_ju\in\gDC{\fM}{\Omega},\;1\leq j\leq \ell}=\gDC{\fM}{\Omega}$
	\item The system $\sP$ satisfies \eqref{ConstantsCOeff}.
	\end{enumerate}
\end{Thm}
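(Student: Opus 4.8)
The plan is to prove the equivalence of (1) and (2) by combining Theorem \ref{globalThmIterates} with a classical Fourier-analytic argument adapted to the ultradifferentiable setting. The implication (2) $\Rightarrow$ (1) is the substantive direction that uses the machinery already developed: if $\sP$ satisfies \eqref{ConstantsCOeff}, then in particular $\sP$ is globally elliptic in $\overline{\Omega}$ (the common complex zeros being finitely many points of ${\Comp}^n$, they cannot fill out a complex cone, and in fact one checks that a finite common complex zero set forces the principal symbols to have no nontrivial common real zero and no nontrivial common complex zero on $\partial\Omega$ after a routine homogeneity argument). Hence Theorem \ref{globalThmIterates} applies and gives $\vgDC[\sP]{\fM}{\Omega}=\gDC{\fM}{\Omega}$; it then remains to see that $\Set*{u\in\Dp(\Omega)\given P_ju\in\gDC{\fM}{\Omega}}\subseteq\vgDC[\sP]{\fM}{\Omega}$, which follows because the iterates $P^\tau u$ automatically satisfy the required $L^2$-bounds once one knows $u$ is smooth (elliptic regularity from Section \ref{Remarks}) and the $P_j$ commute as constant-coefficient operators; the right-hand side of the vector estimate \eqref{GlobalVectorEstimate} is then controlled by the same weight-sequence bounds that define membership in $\gDC{\fM}{\Omega}$.

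For the converse (1) $\Rightarrow$ (2) I would argue by contraposition: suppose the common complex zero set $Z=\Set*{\zeta\in{\Comp}^n\given P_j(\zeta)=0,\;1\leq j\leq\ell}$ is infinite. Since $Z$ is an algebraic variety, being infinite means it has positive dimension, so it contains a nonconstant entire curve; more usefully, one can extract from $Z$ a sequence $\zeta^{(\nu)}\in{\Comp}^n$ with $\abs{\imag\zeta^{(\nu)}}/\abs{\real\zeta^{(\nu)}}$ bounded and $\abs{\zeta^{(\nu)}}\to\infty$, or even a whole complex line (after a linear change of coordinates) along which all the $P_j$ vanish identically. Using such data one builds exponential-type solutions $u_\nu(x)=e^{i\langle x,\zeta^{(\nu)}\rangle}$ (or wave-packet superpositions thereof, cut off to $\Omega$) which satisfy $P_j u_\nu=0$ for all $j$, hence lie trivially in the left-hand side of (1), but whose derivatives grow like $\abs{\zeta^{(\nu)}}^{\alp}$, which — because $\abs{\zeta^{(\nu)}}$ grows without any polynomial bound being available — eventually violates the weight-sequence estimates $\norm*{D^\alpha u}{L^2(\Omega)}\leq Ch^\alp M_\alp$ for every fixed $\bM\in\fM$ and $h>0$, contradicting membership in $\gDC{\fM}{\Omega}$. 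The key point here is that $\fM$ being $[$weakly regular$]$ forces \eqref{AnalyticInclusion2}, so the classes $\gDC{\fM}{\Omega}$ are strictly larger than the analytic class but still consist of functions whose Fourier transforms decay appropriately; an exponential $e^{i\langle x,\zeta\rangle}$ with $\zeta$ genuinely complex has a Fourier transform supported at a single point with the "wrong" growth, so it cannot belong to any quasianalytic or non-quasianalytic Denjoy–Carleman-type class defined by an $\fM$ satisfying \eqref{AnalyticInclusion2} unless $\imag\zeta=0$.

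I expect the main obstacle to be the $(2)\Rightarrow(1)$ reduction to Theorem \ref{globalThmIterates}, specifically verifying that the finiteness hypothesis \eqref{ConstantsCOeff} implies global ellipticity in the sense of the two conditions stated before Theorem \ref{globalThmIterates}. Condition (ii) there — no common complex zeros of the \emph{principal} symbols on $\partial\Omega$ — is about the top-order parts $p_j(\xi)$, whereas \eqref{ConstantsCOeff} is about the \emph{full} symbols $P_j(\xi)$; bridging this gap requires a homogeneity/scaling argument: if $p_j(\xi^0)=0$ for all $j$ at some $\xi^0\neq0$, then $P_j(t\xi^0)=t^{d}p_j(\xi^0)+O(t^{d-1})=O(t^{d-1})$, and one must upgrade this to actual common zeros, which is where one invokes that the $P_j$ are polynomials and the common zero set of their leading forms, if nontrivial, is a cone and hence infinite, forcing the full common zero set to be infinite too (this uses a perturbation/Puiseux-type argument to produce genuine zeros near the cone). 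Making this precise is the crux; the rest of the proof — the exponential-solutions construction for the converse and the elliptic-regularity bookkeeping for the forward direction — follows established lines and I would only sketch it, referring to \cite{MR548225} for the Gevrey prototype and to Theorem \ref{globalThmIterates} and Section \ref{Remarks} for the ultradifferentiable ingredients.
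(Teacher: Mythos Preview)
Your proposal has genuine gaps in both directions.

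For $(2)\Rightarrow(1)$: the reduction to global ellipticity of $\sP$ itself is not possible, because \eqref{ConstantsCOeff} does \emph{not} imply that the principal symbols have no common nontrivial zero. Take $n\geq 2$ and $P_1(\xi)=\xi_1+1$, $P_2(\xi)=\xi_1+2$; the common zero set of the full symbols is empty (hence finite), yet both principal symbols equal $\xi_1$, so the system is not even elliptic in the interior. The perturbation/Puiseux argument you sketch cannot repair this: a common zero of the principal parts need not be a limit of common zeros of the full symbols. The paper avoids this obstruction entirely by invoking Hilbert's Nullstellensatz. From the finite common zero set $\{\xi^1,\dotsc,\xi^\nu\}$ it builds auxiliary polynomials $Q_j(\xi)=\prod_\kappa(\xi_j-\xi_j^\kappa)$ vanishing on that set, so some power $Q_j^\rho$ lies in the ideal generated by the $P_r$; the system $\mathsf{Q}=\{Q_1^\rho,\dotsc,Q_n^\rho\}$ has principal parts $\xi_j^{\nu\rho}$ and is therefore globally elliptic by construction. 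The ideal membership $Q_j^\rho=\sum_r A_{jr}P_r$ then transfers the hypothesis $P_r u\in\gDC{\fM}{\Omega}$ to $Q_j^\rho(D)u\in\gDC{\fM}{\Omega}$, and Theorem \ref{globalThmIterates} applied to $\mathsf{Q}$ (not $\sP$) finishes the argument.

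For $(1)\Rightarrow(2)$: your contraposition does not work as written. For any fixed $\zeta\in\Comp^n$, the function $e^{i\langle x,\zeta\rangle}$ is entire, hence analytic, hence lies in $\gDC{\fM}{\Omega}$ by \eqref{AnalyticInclusion2}; the bound $\abs{D^\alpha u_\nu}\leq C\abs{\zeta^{(\nu)}}^{\alp}$ is harmless because $\abs{\zeta^{(\nu)}}$ is a fixed constant playing the role of $h$. So no single exponential, and no finite combination, violates membership in $\gDC{\fM}{\Omega}$. You would need to produce a \emph{single} distribution $u$ (e.g.\ an infinite superposition) with $P_ju\in\gDC{\fM}{\Omega}$ but $u\notin\gDC{\fM}{\Omega}$, and nothing in your outline does this. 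The paper instead argues functionally: the solution space $Y(\Omega)=\{u:P_ju=0\}$ sits inside $\E(\overline{\Omega})$ as a closed subspace on which all $H^k$-norms are pairwise equivalent; since $\E(\overline{\Omega})$ is nuclear, $Y(\Omega)$ is a nuclear Banach space, hence finite-dimensional. Linear independence of the exponentials $e^{i\langle x,\zeta\rangle}$ for distinct $\zeta$ then forces the common zero set to be finite.
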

\begin{proof}
	First, assume that (1) holds. We set 
	\begin{equation*}
		Y(\Omega)=\Set*{u\in\Dp(\Omega)\given P_ju=0,\;1\leq j\leq \ell}\subseteq\gDC{\fM}{\Omega}
	\end{equation*}
and denote by $Y_k(\Omega)$ the space $Y(\Omega)$ equipped with 
the $H^k(\Omega)$-norm, for $k\in\N_0$. It is easy to see that the $Y_k(\Omega)$ are all Banach spaces. Moreover, the identity mapping
from $Y_{k+1}(\Omega)$ into $Y_k(\Omega)$ is clearly continuous
and therefore an isomorphism.
Thus all $H^k(\Omega)$-norms are pairwise equivalent to each other on $Y(\Omega)$.

On the other hand, $Y(\Omega)\subseteq \E(\overline{\Omega})$ is a closed subspace of
$\E(\overline{\Omega})$ with the usual topology.
The Sobolev embedding theorem implies moreover that the topology
 of $\E(\overline{\Omega})$
is generated by the system of seminorms $\Set{\norm{\,.\,}{H^k(\Omega)}\given k\in\N_0}$. 
Thus on $Y(\Omega)$ the topologies coming from $\E(\overline{\Omega})$ and $H^k(\Omega)$, $k\in\N_0$ agree.
Furthermore, $\Omega$ has only finitely many connected components $\Omega_j$ since $\Omega$ is bounded with Lipschitz boundary and for each two points $x,y$ in a connected components $\Omega_j$
there is a continuous path $\gamma$ connecting $x$ with $y$ such that the length of $\gamma$ is
smaller than $C_j\abs{x-y}$ where $C_j$ is a constant only depending on $\Omega_j$, see \cite{zbMATH06596721}.
It follows that $\E(\overline{\Omega})$ is nuclear and therefore also $Y(\Omega)$ according
to \cite{zbMATH01021609}.
Thence $Y(\Omega)$ is a nuclear Banach space 
and thus $Y(\Omega)$ has to be finite dimensional according to \cite{zbMATH05162168}. 

But if $\xi_0\in{\Comp}^n$ satisfies $P_j(\xi_0)=0$ for all $1\leq j\leq\ell$ then the function
$u(x)=e^{ix\xi_0}$ is a solution of $P_ju=0$ for all $1\leq j\leq \ell$.
Thence the set of all common complex zeros of the polynomials $P_j(\xi)$, $1\leq j\leq \ell$, 
has to be finite.

On the other hand, suppose that (2) is true. Let $\xi^1,\dots,\xi^\nu$ be the common complex
zeros of the polynomials $P_j(\xi)$, $1\leq j\leq \ell$.
For each $1\leq j\leq n$ we consider the polynomial 
\begin{equation*}
	Q_j(\xi)=\prod_{\kappa=1}^\nu \left(\xi_j-\xi_j^\kappa\right)
\end{equation*}
where $\xi=(\xi_1,\dotsc,\xi_n)$ and $\xi^\kappa=(\xi^\kappa_1,\dotsc,\xi^\kappa_n)$.
Then $Q_j(\xi^\kappa)=0$, $1\leq \kappa\leq\nu$, that means that the polynomials
$Q_j(\xi)$ vanish on the set of common complex zeros of the polynomials $P_j$, $1\leq j\leq \ell$.
The Nullstellensatz, cf.~\cite[Theorem IX.1.5]{MR1878556}, implies that there exists an integer
$\rho\geq 1$ such that the polynomials $Q_j^\rho$, $1\leq j\leq n$, belong to the ideal spanned
by the polynomials $P_r$, $1\leq r\leq \ell$. Thence, there exist polynomials $A_{jr}$ such that
\begin{equation*}
	Q_j^\rho(\xi)=\sum_{r=1}^\ell A_{jr}(\xi)P_r(\xi),\quad 1\leq j\leq n.
\end{equation*}
The polynomials $Q_j^\rho(\xi)$ are of order $\nu\rho$ whose principal part is equal 
to  $(\xi_j)^{\nu\rho}$. Thus $0$ is the only complex common zero of these principal parts
and therefore $\mathsf{Q}=\Set{Q_1^\rho,\dotsc,Q_n^\rho}$ is globally elliptic in $\overline{\Omega}$.
Furthermore, if $u\in\Dp(\Omega)$ and $P_ju\in\gDC{\fM}{\Omega}$ for $1\leq j\leq \ell$
then $Q_j^\rho(D)u\in\gDC{\fM}{\Omega}$ for $1\leq j\leq n$.
From \cite{MR0435573} it follows that $u\in\E(\overline{\Omega})$ and therefore
$u\in\vgDC[\mathsf{Q}]{\fM}{\Omega}$.
Finally, according to Theorem \ref{globalThmIterates} we have $u\in\gDC{\fM}{\Omega}$.
\end{proof}
\subsection{Final Remarks}
We can ask if the conditions, which we have imposed on the data of the ultradifferentiable class $\E^{[\fM]}$
for the Kotake-Narasimhan Theorem to hold, can be further loosened.
For this, we recall that we proved the Theorem of Iterates for $\E^{[\fM]}$ in the case
of elliptic operators with analytic coefficients when $\fM$ is [semiregular], cf.~\cite{zbMATH07538293}. But if $\fM$ is [semiregular] then $\E^{[\fM]}$ is closed under
derivation and invariant under composition with analytic mappings by \cite{FURDOS2020123451}.
On the other hand, if the weight matrix $\fM$ is [weakly regular], then $\E^{[\fM]}$ is closed under derivation and invariant under composition with ultradifferentiable mappings of class $[\fM]$.
Although we must note that the assumption of weakly regularity is not a priori optimal for this fact 
to hold, see \cite{MR3285413}.
However, in the case of Braun-Meise-Taylor classes we have that the space $\E^{[\omega]}$ is 
invariant under composition with maps of class $[\omega]$ if and only if $\omega$ is equivalent
to a concave weight function. We observe also that $\E^{[\omega]}$ is closed under derivation
by definition.

All these arguments motivate the following conjecture:
\begin{Conj}
	Let $\U$ be an ultradifferentiable structure which is closed under derivation
	and is invariant under composition with mappings of class $\U$. Then the Kotake-Narashiman Theorem holds in the class $\U$.
\end{Conj}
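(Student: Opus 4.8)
The plan is to reduce the abstract statement to the weight-matrix version already established here, Theorem \ref{localThmIterates}, by extracting from the two structural hypotheses a representation of $\U$ by a weight matrix that is, up to the equivalences $\{\approx\}$ and $(\approx)$, a $[$weakly regular$]$ one. First I would invoke the structure theory of ultradifferentiable classes (cf.\ \cite{MR3285413}, \cite{RainerSchindl2020}) to produce a weight matrix $\fM$ with $\U=\DC{\fM}{\Omega}$ as locally convex spaces; the requirement that $\U$ contain the analytic class is what encodes \eqref{AnalyticInclusion2}, and I expect closure under derivation to be precisely what delivers the derivation-closedness condition \eqref{R-DerivClosed} in the Roumieu case, respectively \eqref{B-DerivClosed} in the Beurling case. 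Granting a $[$weakly regular$]$ representation, Theorem \ref{localThmIterates} applies verbatim and the Kotake-Narasimhan theorem for $\U$ follows at once.

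The heart of the matter is therefore to show that invariance of $\U=\E^{[\fM]}$ under composition with mappings of class $\U$ forces the increasing-root condition \eqref{RootIncreasing2}, at least after passing to a $[\approx]$-equivalent weight matrix (under which $\DC{\fM}{\Omega}$ does not change). The guiding model is the Braun-Meise-Taylor case recorded in Remark \ref{OmegaRemark}: $\E^{[\omega]}$ is stable under composition exactly when $\omega$ is equivalent to a concave weight function, and concave weights yield a weight matrix all of whose members are strongly log-convex, hence satisfy \eqref{RootIncreasing2}. My aim would be to upgrade the general characterization of composition-stability for weight-matrix classes to an equivalence with a weight matrix whose members are strongly logarithmically convex, thereby securing \eqref{RootIncreasing2} and closing the reduction.

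The main obstacle is exactly this last implication, and it is genuinely delicate: as noted in Section \ref{Remarks}, weak regularity is sufficient but not a priori necessary for composition-invariance, so there may well be composition-stable structures that no $[\approx]$-equivalent weight matrix renders $[$weakly regular$]$. For such $\U$ the reduction above cannot be closed, and one must instead rerun the fundamental estimate, Proposition \ref{MainProp}, directly from the composition hypothesis. The decisive combinatorial inequality \eqref{RegularEstimate}, which at present rests on $\sqrt[k]{m_k}$ being increasing, is structurally the very same product of binomial coefficients and ratios of the defining weights that arises in the Fa\`a di Bruno estimates used to \emph{prove} composition-stability; the program is thus to distill from composition-invariance a clean abstract bound that can replace \eqref{RegularEstimate} in the induction of Proposition \ref{MainProp}, and to verify that the Komatsu-type reduction of the Beurling case to the Roumieu case (Proposition \ref{KomatsuTrick}, following \cite{MR550685}) survives under that weaker bound. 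Whether such a bound can always be extracted---equivalently, whether composition-invariance is the exact analytic content driving the Morrey-Nirenberg scheme---is the crux on which the conjecture turns.
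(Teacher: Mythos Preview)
This statement is a conjecture, and the paper does not prove it in general. What the paper does in the paragraphs immediately following it is verify two special cases: Braun--Meise--Taylor classes (already handled via Remark \ref{OmegaRemark} and Theorem \ref{MainTheoremOmega}) and Denjoy--Carleman classes given by a single weight sequence. Your proposal is a research programme for the full conjecture; you correctly isolate the decisive obstacle and explicitly leave it open, so what you have written is not a proof---and neither does the paper offer one for arbitrary $\U$.

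It is nevertheless worth comparing your outline with the paper's concrete argument in the Denjoy--Carleman case, because the paper's route differs from yours in one essential respect. By \cite{MR3285413}, composition-stability of $\DC{\bM}{\Omega}$ is equivalent to $\sqrt[k]{m_k}$ being \emph{almost increasing}: $\sqrt[j]{m_j}\leq C\sqrt[k]{m_k}$ for all $j\leq k$. From this the paper sets $\nu_k=C\inf_{\ell\geq k}\sqrt[\ell]{m_\ell}$ and $\widetilde{M}_k=k!\,\nu_k^k$; then $\widetilde{\bM}\approx\bM$ and $\sqrt[k]{\widetilde{m}_k}=\nu_k$ is genuinely increasing, so \eqref{RootIncreasing} holds. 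Crucially, $\widetilde{\bM}$ need \emph{not} satisfy \eqref{WeakLogConvex}, let alone \eqref{StrongLogConvex}. The step that closes the argument---and which is absent from your plan---is the observation that a close reading of the proofs of Proposition \ref{MainProp} and Theorem \ref{localThmIterates} shows that logarithmic convexity of $\bM$ is never actually used: the combinatorial bound \eqref{RegularEstimate} and the nesting estimate \eqref{StrongCor} rely only on \eqref{AnalyticInclusion1} and \eqref{RootIncreasing}. Hence the entire proof reruns verbatim for $\widetilde{\bM}$, and $\widetilde{\bM}\approx\bM$ transports the conclusion back to $\bM$.

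Your plan instead targets an $[\approx]$-equivalent matrix with \emph{strongly} log-convex members; that is a strictly stronger target than necessary and is plausibly why you perceive an obstruction even before reaching the general weight-matrix setting. The paper's lesson is that root-increasing alone is the correct goal and that even \eqref{WeakLogConvex} can be discarded from the main engine. Whether the analogous manoeuvre---passing from the matrix-level Fa\`a di Bruno conditions characterising composition-stability in \cite{MR3285413} to an $[\approx]$-equivalent matrix satisfying \eqref{RootIncreasing2}---can be carried out for general weight matrices is exactly what the paper leaves open.
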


As we have stated, the conjecture is verified for Braun-Meise-Taylor classes, but we claim moreover
that the same is true for Denjoy-Carleman classes.
Recall from \cite{MR3285413} that a Denjoy-Carleman class $\DC{\bM}{\Omega}$ which
contains $\An(\Omega)$ (cf.~Remark \ref{StrongRemark}) and is closed under derivation,
i.e.~satisfies \eqref{M2prime},
is closed under composition with mappings of class $[\bM]$ if and only if the sequene
$\sqrt[k]{m_k}$ is \emph{almost increasing}, that is
\begin{equation}\label{almostIncr}
	\ex C>0:\quad \sqrt[j]{m_j}\leq C\sqrt[k]{m_k}\qquad \fa j\leq k.
\end{equation}

In order to prove our claim, let $\bM$ be a weight sequence such that  \eqref{almostIncr} and,
if we exclude the analytic case, \eqref{AnalyticInclusion1} hold.
We define a new sequence $\widetilde{\bM}$ by the following procedure.\footnote{It is inspired by \cite[Lemma 8]{RainerSchindl2020}.}
For $k\in\N$ we set
\begin{equation*}
	\nu_k=C\inf_{\ell\geq k}\sqrt[\ell]{m_\ell}
\end{equation*}
where $C>0$ is the constant from \eqref{almostIncr}.
Thence the sequence $\nu_k$ is increasing and $\sqrt[k]{m_k}\leq\nu_k\leq C\sqrt[k]{m_k}$.
 We define $\widetilde{\bM}$ by
$\widetilde{M}_0=1$ and $\widetilde{M}_k=k!\nu_k^k$ for $k\in\N$, in particular $\widetilde{M}_1\geq 1$.
It follows that $\widetilde{\bM}$ satisfies \eqref{AnalyticInclusion1} and \eqref{RootIncreasing}.
We need to point out, that we cannot conclude that $\widetilde{\bM}$ satisfies \eqref{WeakLogConvex} and therefore cannot assume that $\widetilde{\bM}$ is a weigth sequence.
But a close inspection of the proof of Theorem \ref{localThmIterates} in both the Roumieu and Beurling case shows that we still obtain the assertion of Theorem \ref{localThmIterates}
for the sequence $\widetilde{\bM}$. 
 But since $\widetilde{\bM}\approx\bM$, i.e. $\DC{\bM}{\Omega}=\DC{\widetilde{\bM}}{\Omega}$
 and $\vDC{\bM}{\Omega}=\vDC{\widetilde{\bM}}{\Omega}$, we have in fact shown the assertion of Theorem 
 \ref{localThmIterates} for $\bM$. Therefore the conjecture is also true for Denjoy-Carleman classes.

 \begin{Rem}
 	In view of the proof of
 	Proposition \ref{KomatsuTrick} and the argument above, it would make sense to adapt
 	the definition of a weight sequence by replacing \eqref{WeakLogConvex} by the following condition:
 	\begin{equation*}
 		\text{the sequence }\sqrt[k]{M_k}\text{ is increasing.}
 	\end{equation*}
 However, \eqref{WeakLogConvex} is a standard assumption for weight sequences in context
 of Denjoy-Carleman classes, see e.g.~\cite{Komatsu73} or \cite{MR3285413} for classes given
 by weight matrices.
 Moreover, we have needed the concept of logarithmic convexity for the proof of Proposition
 \ref{KomatsuTrick}.
 \end{Rem}

\section*{Acknowledgments}
The author has been supported by Austrian Science Fund (FWF) grant J4439
while he has been a post doctoral researcher under the supervision of Paulo D.~Cordaro at USP-IME from October 2021 to September 2023.\\
The author would also like to thank Gerhard Schindl for numerous helpful discussions.
\bibliographystyle{abbrv}
\bibliography{vectors2}
\end{document}